\documentclass[11pt, reqno]{amsart}

\usepackage{amsmath,amssymb,amsthm,mathtools}
\usepackage[hidelinks]{hyperref}
\usepackage{xcolor}
\usepackage{enumitem}
\usepackage{comment}

\hypersetup{colorlinks=true, linkcolor=blue, citecolor=blue, urlcolor=blue}
\numberwithin{equation}{section}
\newtheorem{theorem}{Theorem}[section]
\newtheorem{proposition}[theorem]{Proposition}
\newtheorem{lemma}[theorem]{Lemma}

\newtheorem{remark}[theorem]{Remark}

\newcommand{\Ft}{\mathbb F_2}
\newcommand{\Fp}{\mathbb F_p}
\newcommand{\C}{\mathbb C}
\newcommand{\Heis}{\mathsf H}
\newcommand{\Stief}[2]{\mathrm{St}(#1,#2)}
\newcommand{\Span}{\operatorname{span}}
\newcommand{\Ent}{\operatorname{Ent}}
\newcommand{\Var}{\operatorname{Var}}
\newcommand{\GLn}{\mathrm{GL}_n(\Ft)}
\newcommand{\TV}{\mathrm{TV}}
\newcommand{\Prob}{\mathbb P}
\newcommand{\E}{\mathbb E}
\newcommand{\cE}{\mathcal E}
\newcommand{\one}{\mathbf 1}
\newcommand{\supp}{\operatorname{supp}}
\newcommand{\wt}{\widetilde}
\newcommand{\Good}{\mathcal G}
\newcommand{\Unif}{\operatorname{Unif}}
\newcommand{\Irr}{\operatorname{Irr}}

\title[$k$-Column Transvections via Local Entropy]
{Mixing on $k$ Columns of the Transvection Walk}

\author{Natesh Pillai}
\address{Department of Statistics, Harvard University}

\author{Aaron Smith}
\address{Department of Mathematics and Statistics, University of Ottawa}

\date{}

\begin{document}

\maketitle

\begin{abstract}
In \cite{Diaconis1996WalksOG}, Diaconis and Saloff-Coste introduced the simple ``transvection" walk on $\mathrm{GL}_n(\mathbb F_2)$: at each
step, choose two distinct rows and add one to the other. In \cite{BenHamou2025}, Ben-Hamou recently proved that this walk has mixing time $O(n^2\log n)$. Inspired by applications in cryptography \cite{sotirakitrap}, the paper \cite{BenHamouPeres2018} conjectured that the first $k$ columns of this walk mixed in $O(nk \log(n))$ steps. Our main result is a proof of this conjecture uniformly in $n$ and $k.$

Our proof is based on a local-to-global entropy estimate, in the spirit of block factorization results such as
\cite{CaputoMenzTetali2015ApproxTensor,CaputoParisi2021BlockFactorization}. In our setting, the kernels that correspond roughly to the block kernels of \cite{CaputoParisi2021BlockFactorization} do not have uniformly large log-Sobolev constants, and so naively applying these techniques does not improve over \cite{BenHamou2025}. We avoid these bad blocks by combining our entropy estimates with a burn-in argument similar to classical drift-and-minorization arguments \cite{Rosenthal1995Minorization}. This method may be of broader interest, and so we illustrate it by proving an analogous result for a family of product-replacement algorithms on the Heisenberg group. 
\end{abstract}

\section{Introduction}\label{sec:introduction}

The \emph{transvection walk} on \(\GLn\), introduced in
\cite{Diaconis1996WalksOG}, is the Markov chain obtained from the following row
operation.  Given the current matrix \(\widehat Z_t\), sample distinct ``donor" and ``recipient" indices
\(a_t,b_t\in\{1,\ldots,n\}\) uniformly at random and set
\[
\widehat Z_{t+1}[\ell,*]
=
\begin{cases}
\widehat Z_t[b_t,*]+\widehat Z_t[a_t,*], & \ell=b_t,\\[4pt]
\widehat Z_t[\ell,*], & \ell\ne b_t.
\end{cases}
\]
 The upper bound
\(O(n^2\log n)\) for the mixing time of this walk is proved in \cite[Theorem~1]{BenHamou2025}. This is conjectured to be optimal. A simple diameter-based lower bound gives $\tau_{\mathrm{mix}} = \Omega \left( \frac{n^{2}}{\log(n)}\right)$, so Theorem 1 of \cite{BenHamou2025} is sharp up to $\text{polylog}(n)$ factors.

In several applications, including the trapdoored-matrix authentication proposal of \cite{sotirakitrap}, it is natural to only care about the mixing of a comparatively
small collection of columns. This motivates the present paper, where we study the projection of the transvection walk onto its first \(k\)
columns.  Writing \(V=\Ft^k\), an element of the state space is an \(n\)-tuple of row vectors whose
span is all of \(V\):
\[
\Stief{n}{k}
=\{(z_1,\ldots,z_n)\in V^n:\Span(z_1,\ldots,z_n)=V\}.
\]
The associated transition kernel \(P_{n,k}\) chooses an ordered donor-recipient
pair \((a,b)\), with \(a\ne b\), and replaces \(z_b\) by \(z_b+z_a\).  Its
stationary distribution is uniform on \(\Stief{n}{k}\).  To avoid periodicity
issues, we state the main result for the half-lazy kernel
\[
    Q_{n,k}=\frac12(I+P_{n,k}).
\]
Our main result settles a conjecture of \cite{BenHamouPeres2018}:

\begin{theorem}\label{thm:intro-transvection-main}
There is an absolute constant \(C<\infty\) such that, for all sufficiently large
\(n\) and every \(1\le k\le n\), the half-lazy \(k\)-column transvection walk on
\(\Stief{n}{k}\) satisfies
\[
    \tau_{\mathrm{mix}}\le C nk\log n .
\]
\end{theorem}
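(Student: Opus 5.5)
We follow the strategy outlined in the abstract: a burn-in phase, then a local-to-global entropy contraction that holds on a ``good'' set of configurations. The state is the \(n\times k\) matrix \(Z=(z_1,\dots,z_n)\), and \(\pi\) denotes the uniform measure on \(\Stief{n}{k}\).

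\textbf{Step 1: the good set.} Define \(\Good\subset\Stief{n}{k}\) to be the set of configurations that look typical at the level of rows. Concretely, for every nonzero linear functional \(\phi\in V^*\), the number of rows \(\ell\) with \(\phi(z_\ell)=1\) should lie in \([n/4,3n/4]\). Equivalently, every nonzero vector of the row space is hit by about half the rows under every projection. A union bound over the \(2^k\le 2^n\) functionals together with Chernoff shows \(\pi(\Good^c)\le e^{-cn}\). We will also want a quantitative version in which \(\Good\) is stable under \(O(nk\log n)\) further steps with high probability.

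\textbf{Step 2: burn-in, drift to \(\Good\).} Starting from an arbitrary state, we show that after \(C_1 nk\log n\) steps the chain lies in \(\Good\) with probability \(1-o(1)\). Fix a functional \(\phi\). The count \(N_\phi=\#\{\ell:\phi(z_\ell)=1\}\) evolves as a birth--death chain: a recipient \(b\) with \(\phi(z_b)=0\) becomes \(1\) exactly when the donor has \(\phi=1\). This gives a drift toward \(n/2\) at rate roughly \(N(n-N)/n^2\) per step. Starting from \(N\ge1\), which holds because the rows span \(V\), the count reaches \(\Theta(n)\) in \(O(n\log n)\) steps. The difficulty is doing this simultaneously for all \(2^k\) functionals. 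A single union bound needs tail bounds of order \(2^{-k}\), which costs time \(O(n(k+\log n))\) per functional. Since \(k\le n\), this gives a burn-in time of \(O(nk+n\log n)\), which is within budget. An alternative is to burn in one coordinate column at a time, using the stopping-time structure of \cite{BenHamou2025}.

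\textbf{Step 3: entropy contraction on \(\Good\).} For \(\mu\) supported mostly on \(\Good\), we aim to prove an approximate block factorization
\[
\Ent_\pi(f)\le \frac{C}{k}\sum_{j=1}^k \E_\pi\,\Ent_{\pi}(f\mid \mathcal F_{-j}) + (\text{error from }\Good^c),
\]
where \(\mathcal F_{-j}\) conditions on all columns except column \(j\). Conditional on the other columns, column \(j\) is a vector \(x\in\Ft^n\) that must avoid a subspace. The transvection moves act on \(x\) as \(x_b\mapsto x_b+x_a\) jointly with the others, so the relevant block kernel is essentially a one-column walk on the fibre. On \(\Good\), the fibres are well connected: adding row \(a\) to row \(b\) changes column \(j\) freely when the other coordinates of \(z_a\) can be cancelled. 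We compare this block kernel to a product-of-bits walk, for which a log-Sobolev inequality holds with constant of order \(1/n^2\) per step after rescaling. Combining the factorization with these block log-Sobolev constants yields per-step entropy decay \(\Ent(\mu_{t+1})\le(1-c/(nk))\Ent(\mu_t)\) while the chain stays in \(\Good\). Since the initial entropy after burn-in is at most \(\log|\Stief{n}{k}|\le nk\), the decay from \(nk\) down to \(O(1)\) needs \(O(nk\log(nk))=O(nk\log n)\) steps. Pinsker's inequality then converts this to total variation.

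\textbf{Main obstacle.} The hard step is Step 3. The block kernels fail to have uniformly good log-Sobolev constants off \(\Good\), so the argument must restrict to \(\Good\), for instance through a restricted or killed chain. We must then control the leakage out of \(\Good\) during the entropy phase by combining the \(e^{-cn}\) stationary mass with the stability of \(\Good\), in a drift-and-minorization style argument as in \cite{Rosenthal1995Minorization}. Our first attempt would be to run the entropy argument for the chain conditioned to stay in \(\Good\) and absorb the exit probability into the total-variation error.
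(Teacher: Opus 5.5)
There are two genuine gaps. The first is in the step you yourself flag as the crux.

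Your Step~3 factorizes entropy over \emph{columns}, but the transvection dynamics does not respect column fibres. A move $z_b\mapsto z_b+z_a$ changes every column in which the donor row $z_a$ is nonzero. So the only nontrivial moves that stay inside a fibre ``all columns except $j$ fixed'' are those whose donor row equals the basis vector $e_j\in V$, and typically no such row exists once $2^k\gg n$. Hence there is no block kernel of $P$ that is ``essentially a one-column walk on the fibre.''

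Bounding a column heat-bath Dirichlet form by $\cE_P$ would therefore need a multi-step path comparison. To flip one entry of column $j$ you must add to row $b$ a set of rows summing to $e_j$, which generically needs a growing number of donors, and you have not supplied the congestion bounds. The displayed factorization is also false as written: take $f$ depending only on column $1$, and the right side is essentially $\frac{C}{k}\Ent_\pi(f)$. Finally, the claimed rate $c/(nk)$ does not follow from a per-column log-Sobolev constant of order $1/n^2$.

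The missing idea is to use the \emph{recipient rows} as the blocks, under the product measure $\mu$ on $V^n$. Conditioning on the recipient $i$, a transvection changes only row $i$, by adding a uniformly chosen other row. So $P=\frac1n\sum_i K_i$ exactly, where $K_i\varphi(u)=\frac1{n-1}\sum_{j\ne i}\varphi(u+z_j)$ is a convolution on $V=\Ft^k$ with eigenvalues $\frac1{n-1}\sum_{j\ne i}\chi_\xi(z_j)$. On every fibre meeting the good set, the nontrivial eigenvalues are bounded away from $1$ in absolute value. Exact tensorization over rows, plus the Poincar\'e-to-log-Sobolev bound with $\log|V|=k\log 2$, then gives $\Ent_\mu(F^2)\le Cnk\,\cE_P^\mu(F,F)$ for $F$ supported on $\Good$. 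Your killed-chain idea then transfers this to the actual chain after burn-in.

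Second, your good-set and burn-in estimates only work for $k\le\varepsilon_0 n$, and you never treat the rest of the range. The union bound gives $2^k e^{-cn}$, which is not small when $k$ is a constant fraction of $n$. In fact $\Good$ is then empty. The map $\phi\mapsto(\phi(z_\ell))_{\ell}$ embeds $V^*$ as a $k$-dimensional binary code whose nonzero words would all need weight in $[n/4,3n/4]$, and the Hamming bound forbids this for $k$ above roughly $0.46n$. For instance, when $k=n$ the rows form a basis and some nonzero $\phi$ has weight $1$.

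The same $2^k$ union bound limits Step~2. You need the count to lie in the window at a fixed time with probability $1-o(2^{-k})$, which again requires $k\le\varepsilon_0 n$; otherwise your birth--death analysis is a reasonable substitute for the paper's use of one-column mixing bounds. For $k>\varepsilon_0 n$ you need a separate argument. For example, the $k$-column walk is a projection of the walk on $\GLn$, whose mixing time is $O(n^2\log n)$ by Ben-Hamou's theorem, and $n^2\le nk/\varepsilon_0$ in this range.
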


At the end of Section~\ref{sec:transvection}, we prove a lower bound that matches up to $polylog(n)$ factors:

\begin{theorem}\label{thm:intro-transvection-lower}
There is an absolute constant \(c>0\) such that, for all sufficiently large
\(n\) and every \(1\le k\le n\), the half-lazy \(k\)-column transvection walk on
\(\Stief{n}{k}\) satisfies
\[
    \tau_{\mathrm{mix}}
    \ge c\max\left(n\log n,\frac{nk}{\log(e n)}\right).
\]
\end{theorem}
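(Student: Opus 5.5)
We prove the two lower bounds separately and take the maximum. Both use the statistic method: we exhibit a set $A$ with $\Unif(A)$ small whose probability under $Q_{n,k}^t$ stays large when $t$ is below the claimed threshold. We start from a fixed initial state $z^{(0)}$ with $z^{(0)}_i=e_i$ for $i\le k$ and $z^{(0)}_i=0$ for $i>k$. This state lies in $\Stief{n}{k}$, and since $\tau_{\mathrm{mix}}$ is a worst case over starting states, a lower bound from this one state suffices.

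\emph{The $n\log n$ bound (coupon collector).} A row $i>k$ can become nonzero only if it is a recipient $b$ and its donor is nonzero. In particular, it stays zero until it is chosen as a recipient at some non-lazy step. Each step selects a given row as recipient with probability at most $1/n$. So after $t=\tfrac12 n\log n$ steps the number of rows $i\in\{k+1,\dots,n\}$ never chosen as recipient has mean of order $(n-k)n^{-1/2}$. A second-moment (or negative association) argument shows it is concentrated there. This requires $n-k\ge n/2$. In the opposite case $k>n/2$, the bound $nk/\log(en)\gtrsim n^2/\log n$ already dominates $n\log n$, so no separate argument is needed.

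Under the uniform distribution on $\Stief{n}{k}$, the number of zero rows is stochastically tiny: each row is zero with probability about $2^{-k}\le 1/2$, and rows are nearly independent. Hence the event ``at least $\sqrt n/4$ zero rows'' has probability near $1$ under the walk and exponentially small probability under $\Unif$. I would check this near-independence via the comparison between $\Unif(\Stief{n}{k})$ and $\Unif(V^n)$, which have total variation distance $O(2^{k-n})$ when $k\le n/2$. This gives $\tau_{\mathrm{mix}}\ge c\,n\log n$.

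\emph{The $nk/\log(en)$ bound (counting/entropy).} This is the diameter-type bound mentioned for the full walk, adapted to $k$ columns. After $t$ steps the support of $Q_{n,k}^t(z^{(0)},\cdot)$ has size at most $(n^2+1)^t$, since each step either stays put or picks one of at most $n^2$ ordered pairs. On the other hand,
\[
|\Stief{n}{k}|\ge 2^{nk}\bigl(1-2^{k-n}\bigr)\ge 2^{nk-1}
\]
for $k<n$, and it is of order $2^{n^2}$ when $k=n$. If $(n^2+1)^t\le \tfrac14|\Stief{n}{k}|$, then the distribution at time $t$ puts all its mass on a set of uniform measure at most $1/4$, so its total variation distance to uniform is at least $3/4>1/4$. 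Thus $\tau_{\mathrm{mix}}\ge (nk-3)/\log_2(n^2+1)\ge c\,nk/\log(en)$ for large $n$.

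\emph{Main obstacle.} The counting step is routine. The delicate part is the concentration in the coupon-collector step: the zero-row indicators are dependent, both through the recipient choices and through the Stiefel constraint under the uniform measure. I expect a standard variance computation to handle it, since distinct rows are chosen as recipients in a weakly negatively correlated way. Failing that, one can use the simpler statistic ``row $n$ is zero.'' It has probability about $(1-1/n)^{t/2}$ under the walk versus at most $2^{-k}\cdot 2$ under uniform. This weaker statistic only yields the bound when $k$ is large, so I would rely on the concentration argument for small $k$.
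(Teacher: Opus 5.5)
Your counting argument for the $nk/\log(en)$ term is the same as the paper's and is correct. At $k=n$ the constant $nk-3$ needs a trivial adjustment, since $|\GLn|\approx 0.29\cdot 2^{n^2}$.

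\textbf{The gap is in the $n\log n$ term.} You claim that the event ``at least $\sqrt n/4$ zero rows'' is exponentially unlikely under $\Unif(\Stief{n}{k})$. This is false for small $k$. As you observe yourself, each row is zero with probability about $2^{-k}$, so the zero-row count is approximately $\mathrm{Bin}(n,2^{-k})$. Its mean is about $n/2$ when $k=1$, and it exceeds $\sqrt n$ whenever $2^k\le\sqrt n$. In that range the event has stationary probability close to $1$, and the statistic separates nothing.

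More generally, counting rows never chosen as recipient certifies only about $n e^{-t/(2n)}=n^{1-c/2}$ zero rows at time $t=cn\log n$. That beats the stationary count $n2^{-k}$ only if $2^k\gg n^{c/2}$. But the $n\log n$ term is the larger one in the maximum exactly when $k\lesssim(\log n)^2$, in particular for bounded $k$. So the argument fails where it is needed, and, as you note, your fallback statistic ``row $n$ is zero'' fails there too.

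For small $k$ the walk really does have many more zero rows than stationarity, about $n-kn^{c}$. A bound based on survival of untouched rows cannot detect this. What you need is an upper bound on the growth of the nonzero part.

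\textbf{How the paper handles it.} The paper projects onto the first column. This maps the $k$-column walk to the one-column walk on $\Ft^n\setminus\{0\}$ and pushes $\pi$ to the uniform law there. Since total variation can only decrease under projection, the $\Omega(n\log n)$ lower bound of Ben-Hamou--Peres transfers directly.

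\textbf{A self-contained version from your start $z^{(0)}$.}
\begin{enumerate}
\item The first column has Hamming weight $W_0=1$.
\item A step can increase the weight only when the donor entry is $1$ and the recipient entry is $0$, which has probability at most $W_t/n$. Hence $\E[W_{t+1}\mid W_t]\le W_t(1+1/n)$, and $\E W_t\le e^{t/n}=n^{c}$ at $t=cn\log n$.
\item By Markov's inequality, $\Prob(W_t\ge n/4)\le 4n^{c-1}=o(1)$ for $c<1$.
\item Under $\pi$ the first column is uniform on $\Ft^n\setminus\{0\}$, so Hoeffding's inequality gives $W\ge n/4$ with probability $1-e^{-\Omega(n)}$.
\end{enumerate}
This gives total variation $1-o(1)$ at time $cn\log n$, uniformly in $1\le k\le n$, with no need for concentration of dependent indicators. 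It is the same growth argument the paper uses for its Heisenberg lower bound.
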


The proof of Theorem \ref{thm:intro-transvection-main} begins by comparing the transvection walk to a walk on a slightly larger state space, as in the previous work \cite{BenHamou2025}. It also directly uses the main result of \cite{BenHamou2025} to deal with the case that $k \approx n$ is large. Our arguments are otherwise  essentially unrelated to those of \cite{BenHamou2025}. 

The main idea behind our proof is to compare the transvection walk to a simpler collection of ``frozen" walks obtained by conditioning on both the recipient coordinate and all non-recipient rows. The proof requires three main steps. First, we show that these frozen walks mix rapidly for a set of ``typical" collections of donor vectors. Second, we show that the Markov chain rapidly enters this set of typical donor vectors and remains in this set for a long time. Finally, we show that it is possible to combine local estimates from steps 1 and 2 into a global bound.  We think that this third step might be of independent interest, as it is often very natural to try to bound the mixing time of a complicated Markov chain in terms of simpler frozen or ``censored" chains and these comparisons are known to be somewhat difficult in general \cite{PeresWinkler2013Censoring,FillKahn2013Comparison}.

In order to illustrate the method, in Section \ref{sec:heis} we introduce and analyze a random walk on generators of the Heisenberg group that is similar to (but more complicated than) the transvection walk. 

\subsection{Notation and Detailed Proof sketch}\label{subsec:intro-proof-idea}

\subsubsection{Notation for Fibres and Good Sets} \label{SecFibreNotationFirstTime}

We set some notation related to product spaces, fibres, and ``good" sets that will be used throughout the proof sketch and the rest of the paper. Since our results will apply to walks other than the transvection walk, we use more generic notation.

For a product space \(X=S^N\), 
\(x=(x_1,\ldots,x_N)\in X\), \(i\in[N]\) and \(u\in S\), write
\[
x^{i,u}
=
(x_1,\ldots,x_{i-1},u,x_{i+1},\ldots,x_N).
\]
For a fixed index $i$ and configuration $x \in S^{N}$, define
\[
x_{-i}=(x_1,\ldots,x_{i-1},x_{i+1},\ldots,x_N)\in S^{N-1}
\]
and then define the corresponding \(i\)-fibre 
\[
\mathcal F_i(x_{-i})
=
\{x^{i,u}:u\in S\}\subseteq X.
\]
For \(G\subseteq X\), we call this fibre \(G\)-good if
\[
\mathcal F_i(x_{-i})\cap G\ne\varnothing.
\]
When the set $G$ is clear from the context, we simply call the fibre \textit{good.}

\begin{remark} [Relationship to Theorem \ref{thm:intro-transvection-main}]
 In the \(k\)-column transvection walk, we set \(S=V=\mathbb F_2^k\), \(N=n\),
and \(x^{i,u}\) is the row tuple obtained from \(x\) by replacing row \(i\)
by \(u\). Notice that these fibres are fibres in the ambient product space; they are generally not contained in the subset \(\Stief{n}{k}\).
\end{remark}

\subsubsection{Detailed Proof Sketch for the Transvection Walk}
We give a proof sketch for the walk on \(\GLn\); the basic approach for the walk on the Heisenberg group will be similar. We use notation from this section when analyzing the transvection walk in Section \ref{sec:transvection}.

Let \(V=\Ft^k\) and let $\mu$ be the uniform distribution on $V^{n}$. For \(\xi\in V^*\), define the associated function $\chi_\xi \, : \, V \to \{-1,1\}$ by the formula
\[
\chi_\xi(u)=(-1)^{\xi(u)}, \qquad u\in V.
\]
We extend this to a function $S_\xi \, : \, V^{n} \to \mathbb{Z}$ by the formula
\[
S_\xi(z)=\sum_{i=1}^n \chi_\xi(z_i).
\]
The first important step is to show that the restriction of the walk to functions on the
following set of ``good" or ``well-balanced" elements of  \(\Stief{n}{k}\):
\begin{equation} \label{DefSketchFourierGood}
\Good_{\mathrm{tr}}
=\left\{z\in\Stief{n}{k}:\max_{0\ne\xi\in V^*}|S_\xi(z)|\le\frac n4\right\}
\end{equation}
satisfies a log-Sobolev inequality with constant $\Omega (\frac{1}{nk})$. See Inequality \eqref{IneqLocalLogSob}.

We now describe the proof of Theorem~\ref{thm:tr-linear-lazy}. Fix a recipient row \(i\) and condition on the values $\{z_{j}\}_{j \neq i}$ of all
other rows. Conditioning one step of the walk on row \(i\) being the recipient and then projecting to the fibre gives the following kernel on the possible value \(u=z_i\in V\):
\[
    K_i\varphi(u)=\frac1{n-1}\sum_{j\ne i}\varphi(u+z_j).
\]
The functions \( \{ \chi_\xi \}_{\xi\in V^*}\) diagonalize \(K_i\), and the eigenvalue associated with index \(\xi\ne0\) is
\[
    \frac1{n-1}\sum_{j\ne i}(-1)^{\xi(z_j)}.
\]
By Proposition \ref{prop:tr-fibre-gap}, the condition that the fibre $\mathcal F_i(z_{-i})$ be \(\Good_{\mathrm{tr}}\)-good is equivalent, up to a contribution that is negligible for large $n$, to a uniform bound on these nontrivial eigenvalues.

In particular, the operators associated with good fibres have spectral gaps uniformly bounded away from 0; see Proposition~\ref{prop:tr-fibre-gap}. The tensorization property of entropy (see Inequality \eqref{eq:abstract-tensorization}), combined with a standard bound on log-Sobolev inequalities in terms of spectral gaps, then gives a local estimate through Proposition~\ref{prop:abstract-fibre-gap-lsi}
\begin{equation} \label{IneqSketchLSI}
\Ent_\mu(F^2)\le C nk\,\cE_{P_{n,k}}^\mu(F,F)
\end{equation}
for functions $F$ supported on \(\Good_{\mathrm{tr}}\).

We will see that, in order to use these local log-Sobolev inequalities to obtain a global bound, it is enough to show that the chain enters \(\Good_{\mathrm{tr}}\) quickly and remains there for a long time; see Lemma~\ref{lem:tr-burnin} and Theorem~\ref{thm:abstract-good-set}. The required burn-in estimates can be obtained by careful use of existing strong results on the mixing time of the 1-column walk \cite[Theorem~1]{BenHamouPeres2018}. More precisely, for each fixed \(\xi\ne0\), the projection
\[
(z_1,\ldots,z_n)\mapsto(\xi(z_1),\ldots,\xi(z_n))
\]
is simply a copy of the one-column transvection walk on \(\Ft^n\setminus\{0\}\).
The one-column estimates \cite[Theorem~1]{BenHamouPeres2018} and
\cite[Theorem~1.1]{PeresTanakaZhai2020}, combined with a union bound over the \(2^k-1\) nonzero
functionals, imply that the \(k\)-column walk enters \(\Good_{\mathrm{tr}}\)
after \(O(nk\log n)\) steps and then remains there for polynomially-in-$n$ many steps
with high probability; see Lemma~\ref{lem:tr-burnin}.

Putting these ideas together: we show that the chain enters a good set quickly, prove a log-Sobolev inequality like \eqref{IneqSketchLSI} on this good set, and finally show that mixing on the good set implies fast mixing on the full state space by Theorem~\ref{thm:abstract-good-set}.

Although this proof sketch is stated for the transvection walk, we will see in Section~\ref{sec:heis} that the same three-step structure of burn-in, local fibre estimates, and patching is used for the Heisenberg walk.  The main differences come from the analogues of the functions $S_{\xi}$: for the Heisenberg walk these are replaced by the irreducible representations described in Proposition~\ref{prop:heis-reps} and used in Proposition~\ref{prop:heis-fibre-gap}.

\subsection{Related work}\label{subsec:related-work}

There is a long history of studying the mixing times of random walks on groups; see, for example, \cite{Diaconis1988,SaloffCoste2004,levin2017markov}. The product-replacement algorithm, introduced in
\cite{CellerLeedhamGreenMurrayNiemeyerOBrien1995}, is one of the most computationally-relevant such walks, as it is an efficient algorithm for generating random group elements; see \cite{Pak2001} for
background. Closely-related walks on generating sets of abelian groups were studied in
\cite{Diaconis1996WalksOG}, and their Theorem 4.1 gives a mixing time bound for the transvection walk studied in this paper.  

The transvection walk is one of the better-studied special cases of the product-replacement algorithm. The spectral gap was computed as a consequence of Kassabov's Kazhdan-constant estimates \cite{Kassabov2005}. Since then, substantial progress has been made. The one-column walk is closely related to the stratified hypercube walk studied in \cite{ChungGraham1997} and was analyzed sharply in \cite{BenHamouPeres2018}. The paper \cite{PeresTanakaZhai2020} showed cutoff for product replacement algorithms on fixed finite groups, and in particular showed cutoff for the $k$-column walk for all fixed $k$ as $n$ grows. For the full transvection walk, the correct mixing time was found in \cite[Theorem~1]{BenHamou2025}.  

Several closely-related random transvection walks on \(\mathrm{SL}_n(\mathbb F_q)\) were studied by Hildebrand \cite{Hildebrand1992}. The Heisenberg walk example in this paper is not as well-studied. We introduce it in this paper as a test case for our method and to show how one might deal with a problem that occurs in most walks but not in the transvection walk: the fibre kernel $K_i$ can no longer be diagonalized in an extremely simple form.

At a high level, the ingredients in our paper are  standard:
burn-in estimates, local Poincar\'e/log-Sobolev inequalities and a tensorization inequality, and a patching argument.  The main contribution is making them work together for these walks.  

The use of local log-Sobolev inequalities and the tensorization inequality \eqref{eq:abstract-tensorization} is closest in spirit to entropy
factorization methods as in 
\cite{Cesi2001QuasiFactorization,LuYau1993KawasakiGlauber,
CaputoMenzTetali2015ApproxTensor,CaputoParisi2021BlockFactorization,
BlancaCaputoChenParisiStefankovicVigoda2022Mixing}.  These methods are also somewhat related to decomposition methods for Markov chains
\cite{MadrasRandall2002Decomposition,JerrumSonTetaliVigoda2004Decomposable}. The local log-Sobolev inequalities are obtained for what we call ``frozen" kernels, obtained by conditioning on or ``freezing" certain parts of the state space. This is similar in spirit to previous work on censorship inequalities
\cite{PeresWinkler2013Censoring,FillKahn2013Comparison}, though we do not have any similar monotonicity result. 

Finally, we find that the frozen kernels do not all have large spectral gaps. We avoid the kernels with small spectral gaps by using a burn-in argument. This is similar in spirit to the classical drift-and-minorization argument, which proves rapid mixing on a good set with a simple coupling bound and uses a burn-in argument to deal with the remainder of the state space \cite{Rosenthal1995Minorization}. The same basic approach of combining local estimates with a burn-in estimate has been used with other functional inequalities in many other contexts, as in \textit{e.g.} \cite{chen2020fast}. The details of the burn-in argument are idiosyncratic to these particular walks.

\subsection{Paper Outline}
Section~\ref{sec:framework} states and proves a generic mixing time upper bound, Theorem~\ref{thm:abstract-good-set}, for a class of random walks that includes the product-replacement algorithms.  Section~\ref{sec:transvection} verifies the assumptions of Theorem~\ref{thm:abstract-good-set} for the \(k\)-column transvection walk, proving our main result
Theorem~\ref{thm:intro-transvection-main} as a consequence.  Section~\ref{sec:heis} introduces a version of the product-replacement algorithm for the Heisenberg group and proves an analogous result, Theorem~\ref{thm:heis-main}, as a consequence. It also proves a nearly-matching lower bound, Theorem~\ref{thm:heis-lower}. 

\section{Generic Bounds for ``Coordinate Replacement" Walks}\label{sec:framework}

This section is written independently of the two main examples.

\subsection{Generic notation}

For a finite set $S$, we write $u_{S}$ for the uniform distribution on $S$.

For a probability measure \(\rho\) on a finite set \(E\), write
\[
    \Ent_\rho(f^2)=\rho(f^2\log f^2)-\rho(f^2)\log\rho(f^2).
\]
If \(K\) is a reversible Markov kernel with stationary measure \(\rho\), define the Dirichlet form
\[
    \cE_K^\rho(f,f)=\frac12\sum_{x,y\in E}\rho(x)K(x,y)(f(x)-f(y))^2,
\]
with $\cE_K^\rho(f,g)$ the usual bilinear extension. When \(\rho\) is clear from context, we suppress it from the notation.

We often switch between continuous and discrete time for convenience. When bounds apply in both situations without important change, we sometimes replace a time $t$ by $\lceil t \rceil$ without comment.

A substochastic kernel is a nonnegative kernel whose row sums are at most one. If
\(K\) is a reversible substochastic kernel on \((E,\rho)\), we use the
equivalent form
\[
    \cE_K^\rho(f,f)=\langle f,(I-K)f\rangle_\rho.
\]

For a reversible Markov kernel \(K\) with stationary law \(\rho\), we define the
Poincar\'e constant \(C_P\) to be the smallest constant $C$ satisfying
\[
\Var_\rho(f)\le C\,\cE_K^\rho(f,f)
\]
for all real \(f\). The spectral gap is the reciprocal of this constant.

If \(K\) is a kernel on a finite set \(E\) and \(A\subseteq E\) is invariant under \(K\), then the restriction of \(K\) to \(A\) is the Markov kernel \(K_A(x,y)=K(x,y)\), \(x,y\in A\).

For a nonnegative function \(u\) on \(E\), define the entropy of the
subprobability density \(u\) by
\[
    H_\rho(u)=\rho\left[u\log\frac{u}{\rho(u)}\right],
\]
with the usual convention at \(u=0\).

We recall the following result for obtaining log-Sobolev inequalities from Poincar\'e inequalities (see \textit{e.g.} \cite{DiaconisSaloffCoste1996LSI}):

\begin{lemma}[Poincar\'e to log-Sobolev]\label{lem:abstract-gap-lsi} There is a universal constant $C < \infty$ such that the following holds.
Let \(K\) be reversible on a finite set \(E\), with stationary law \(\rho\) and
Poincar\'e constant at most \(C_P\).  Then for every real \(f\),
\[
    \Ent_\rho(f^2)\le C C_P\log(1/\rho_*)\,\cE_K^\rho(f,f),
\]
where $\rho_*=\min_{x\in E}\rho(x)$.
\end{lemma}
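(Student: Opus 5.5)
We prove Lemma~\ref{lem:abstract-gap-lsi} through the standard route of Diaconis and Saloff-Coste, splitting the entropy into a small-deviation part and a large-deviation part. By homogeneity, and since $\Ent_\rho(f^2)\le \Ent_\rho(|f|^2)$ trivially while $\cE_K(|f|,|f|)\le\cE_K(f,f)$, we may assume $f\ge 0$. The first step is the Rothaus-type inequality
\[
\Ent_\rho(f^2)\le \Ent_\rho\big((f-m)^2\big)+2\Var_\rho(f),\qquad m=\rho(f),
\]
which reduces matters to centered functions, modulo a term already controlled by $C_P\,\cE_K(f,f)$. Since $\log(1/\rho_*)\ge\log 2$ whenever $|E|\ge2$, that Poincar\'e term is absorbed into the constant.

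For a centered $g=f-m$, we bound $\Ent_\rho(g^2)$ directly. Normalize so that $\rho(g^2)=1$, and use the variational formula
\[
\Ent_\rho(g^2)=\sup\{\rho(g^2h):\rho(e^h)\le 1\}.
\]
Alternatively, use the elementary bound $\Ent_\rho(g^2)\le \rho\big(g^2\log(g^2/\rho(g^2))\big)$ together with the pointwise estimate $g(x)^2\le \rho(g^2)/\rho_*$. The latter holds because $\rho(x)g(x)^2\le\rho(g^2)$. It gives $\log(g^2/\rho(g^2))\le\log(1/\rho_*)$ everywhere, so
\[
\Ent_\rho(g^2)\le \log(1/\rho_*)\,\rho(g^2)=\log(1/\rho_*)\Var_\rho(f)\le C_P\log(1/\rho_*)\,\cE_K(f,f).
\]
Combining this with the Rothaus step yields the lemma with a universal constant, for instance $C=4$.

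The only step needing care is the Rothaus inequality $\Ent(f^2)\le\Ent((f-m)^2)+2\Var(f)$. This is classical (Rothaus 1985; see also Bobkov--G\"otze), and we cite it rather than reprove it. If a self-contained argument were wanted, one would verify it by differentiating in $m$, along the line $f=m+tg$, and checking convexity; this is the main technical point, but it is standard. Everything else is the trivial sup-norm bound. Note also that the $\log(1/\rho_*)$ loss is exactly what the crude pointwise bound costs, which is why the lemma carries that factor.
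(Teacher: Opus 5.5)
Your argument is correct. Rothaus's lemma together with the sup-norm bound $\Ent_\rho(g^2)\le\log(1/\rho_*)\,\rho(g^2)$ for $g=f-\rho(f)$ gives $\Ent_\rho(f^2)\le(2+\log(1/\rho_*))\Var_\rho(f)$, and the Poincar\'e inequality then yields the lemma with $C=1+2/\log 2<4$ (the case $|E|=1$ is trivial).

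The paper gives no proof and simply cites Diaconis--Saloff-Coste. The cited argument has the same two-step structure: first bound the entropy by roughly $\log(1/\rho_*)$ times the variance, then apply Poincar\'e. It gets a slightly sharper constant from the log-Sobolev constant of the independence sampler $K_0(x,y)=\rho(y)$, whose Dirichlet form is exactly $\Var_\rho(f)$. Your Rothaus-plus-$L^\infty$ route is an adequate self-contained substitute.
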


\subsection{Coordinate-replacement kernels}\label{subsec:coordinate-replacement}

Let \(S\) be a finite set, let $N \in \mathbb{N}$, let $\mathcal X=S^N$, and let $\mu$ be the uniform measure on $\mathcal X$. We use the fibre notation introduced in Section \ref{SecFibreNotationFirstTime}.

We consider kernels \(P\) on \(\mathcal X\) with the following structure.  For
each coordinate \(i\), and each
\(x_{-i}=(x_1,\ldots,x_{i-1},x_{i+1},\ldots,x_N)\), there is a Markov kernel
\(K_i(x_{-i})\) on \(S\), reversible with respect to uniform measure on \(S\),
such that
\begin{equation}\label{eq:abstract-fibre-decomposition}
    Pf(x)=\frac1N\sum_{i=1}^N
    \sum_{u\in S}K_i(x_{-i};x_i,u)f(x^{i,u}),
\end{equation}
where \(x^{i,u}\) is obtained from \(x\) by replacing coordinate \(i\) by
\(u\).  We call such kernels \textit{coordinate replacement} kernels. The walks studied in this paper do not have this form, but the walks obtained by conditioning on the recipient coordinate do have this form; their fibre kernels are displayed in \eqref{eq:tr-fibre-kernel} and \eqref{eq:heis-fibre-kernel}.

Let \(\Omega\subseteq\mathcal X\) be a set with $\mu(\Omega) > 0$ that is invariant under $P$, and define the restricted measure
\[
\pi=\mu(\cdot\mid\Omega).
\]
We also assume that \(P\)  is reversible with respect to \(\mu\) on \(\mathcal X\); hence the restriction of $P$ to \(\Omega\) is reversible with respect to \(\pi\). 

We shall repeatedly use the tensorization inequality
\begin{equation}\label{eq:abstract-tensorization}
    \Ent_\mu(F^2)
    \le
    \sum_{i=1}^N
    \mu\left[\Ent_\mu\bigl(F^2\mid (X_\ell)_{\ell\ne i}\bigr)\right],
\end{equation}
where $\mu$ is a product measure and $X \sim \mu$.

We apply Lemma \ref{lem:abstract-gap-lsi} to our coordinate-replacement kernels:

\begin{proposition}[Fibre-gap to local-LSI ]\label{prop:abstract-fibre-gap-lsi}
Let \(P\) be a coordinate-replacement kernel of the form
\eqref{eq:abstract-fibre-decomposition}, and let \(G\subseteq\Omega\).  Suppose
there is \(\gamma>0\) such that, for every coordinate \(i\) and every
configuration \(x_{-i}\) whose \(i\)-fibre
\[
    \{x^{i,u}:u\in S\}
\]
intersects \(G\), the fibre kernel \(K_i(x_{-i})\) has spectral gap at least
\(\gamma\) with respect to uniform measure on \(S\).  Then every nonnegative
\(F:\mathcal X\to\mathbb R\) with \(\supp(F)\subseteq G\) satisfies
\begin{equation}\label{eq:abstract-local-lsi}
    \Ent_\mu(F^2)
    \le
    C\frac{N\log |S|}{\gamma}\,\cE_P^\mu(F,F),
\end{equation}
where \(C<\infty\) is universal.
\end{proposition}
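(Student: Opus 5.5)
We combine the tensorization inequality \eqref{eq:abstract-tensorization} with Lemma~\ref{lem:abstract-gap-lsi} applied fibre by fibre. Since $\mu$ is uniform on $S^N$, it is a product measure. Conditionally on $(X_\ell)_{\ell\ne i}=x_{-i}$, the law of $X_i$ is uniform on $S$. Hence
\[
\Ent_\mu(F^2)\le\sum_{i=1}^N\mu\bigl[\Ent_{u_S}\bigl(F(x^{i,\cdot})^2\bigr)\bigr],
\]
where the outer expectation is over $x_{-i}$ and $u\mapsto F(x^{i,u})$ is viewed as a function on $S$.

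The first step is to discard the bad fibres. Fix $i$ and $x_{-i}$. If the fibre $\mathcal F_i(x_{-i})$ does not meet $G$, then $F(x^{i,\cdot})\equiv 0$ because $\supp F\subseteq G$, so the conditional entropy vanishes. If the fibre meets $G$, the hypothesis says $K_i(x_{-i})$ is reversible with respect to $u_S$ with Poincar\'e constant at most $1/\gamma$. Since $(u_S)_*=1/|S|$, Lemma~\ref{lem:abstract-gap-lsi} gives
\[
\Ent_{u_S}\bigl(F(x^{i,\cdot})^2\bigr)\le \frac{C\log|S|}{\gamma}\,\cE^{u_S}_{K_i(x_{-i})}\bigl(F(x^{i,\cdot}),F(x^{i,\cdot})\bigr).
\]
The universal constant does not depend on the fibre, which is what makes the step uniform. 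The bound therefore holds for every $x_{-i}$, trivially on bad fibres.

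The second step is to identify the resulting sum with the global Dirichlet form. By \eqref{eq:abstract-fibre-decomposition},
\[
\cE_P^\mu(F,F)=\frac12\sum_{x,y}\mu(x)P(x,y)(F(x)-F(y))^2=\frac1N\sum_{i=1}^N\frac12\sum_{x}\mu(x)\sum_{u}K_i(x_{-i};x_i,u)\bigl(F(x)-F(x^{i,u})\bigr)^2 .
\]
Writing $\mu(x)=\mu_{-i}(x_{-i})\,|S|^{-1}$ turns the $i$-th inner term into $\mu\bigl[\cE^{u_S}_{K_i(x_{-i})}(F(x^{i,\cdot}),F(x^{i,\cdot}))\bigr]$. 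One bookkeeping point needs checking here: several $i$ may contribute to the same transition $x\to y$, for instance when $y=x$ or through diagonal terms. This is harmless, because the expression $\tfrac12\sum\mu(x)P(x,y)(F(x)-F(y))^2$ is linear in $P$, so decomposing $P$ as the average of the $N$ fibre kernels decomposes the Dirichlet form exactly. I would state this linearity explicitly rather than use the $\langle f,(I-P)f\rangle$ form.

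Combining the two steps gives
\[
\Ent_\mu(F^2)\le\frac{C\log|S|}{\gamma}\sum_i\mu\bigl[\cE_{K_i}\bigr]=\frac{CN\log|S|}{\gamma}\,\cE_P^\mu(F,F),
\]
which is \eqref{eq:abstract-local-lsi}. I expect no step to be a real obstacle. The only places needing care are the linearity identification above and confirming that Lemma~\ref{lem:abstract-gap-lsi} holds with the same constant $C$ for every fibre kernel. Nonnegativity of $F$ is not actually needed.
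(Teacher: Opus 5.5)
Your proposal is correct and follows the paper's proof step for step. The steps are: tensorization over coordinates; zero conditional entropy on fibres that miss $G$; Lemma~\ref{lem:abstract-gap-lsi} with $\rho_*=1/|S|$ on the good fibres; and the identity $\sum_i\mu[\cE_{K_i(X_{-i})}(F,F)]=N\cE_P^\mu(F,F)$ from \eqref{eq:abstract-fibre-decomposition}. Your side remarks, that the Dirichlet form is linear in the kernel and that nonnegativity of $F$ is never used, are accurate clarifications of points the paper leaves implicit.
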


\begin{proof}
We begin with the tensorization inequality \eqref{eq:abstract-tensorization}.  Fix \(i\) and condition
on \(X_{-i}=x_{-i}\).  If the fibre does not intersect \(G\), then \(F\)
vanishes identically on that fibre and the conditional entropy is zero.  If the
fibre intersects \(G\), apply Lemma~\ref{lem:abstract-gap-lsi} to the kernel
\(K_i(x_{-i})\), whose stationary law is uniform on \(S\), to get
\[
    \Ent_{u\sim\Unif(S)}(F(x^{i,u})^2)
    \le C\frac{\log |S|}{\gamma}
    \cE_{K_i(x_{-i})}(u\mapsto F(x^{i,u}),u\mapsto F(x^{i,u})).
\]
Averaging over \(x_{-i}\) and summing over \(i\), while using
\eqref{eq:abstract-fibre-decomposition}, gives
\[
    \sum_{i=1}^N\mu\bigl[\cE_{K_i(X_{-i})}(F,F)\bigr]
    =N\cE_P^\mu(F,F),
\]
which proves \eqref{eq:abstract-local-lsi}.
\end{proof}

\subsection{Entropy decay for killed chains}\label{subsec:killed-entropy}

We will be proving log-Sobolev inequalities only on ``good" sets such as that in Equation \eqref{DefSketchFourierGood}. In order to apply these to the full state space, we will first apply the log-Sobolev inequalities to a sub-stochastic chain that is ``killed" when it leaves the good set, then show that paths of the original chain are close (in an appropriate sense) to those in the killed chain. This section collects the estimates required to apply log-Sobolev inequalities to sub-stochastic chains, and mimics standard estimates such as those in \cite{DiaconisSaloffCoste1996LSI} quite closely.

Let \(G\subseteq\Omega\) with \(\pi(G)>0\), and let \(K_G\) be \(P\) killed on
exiting \(G\):
\begin{equation}\label{eq:abstract-killed-kernel}
    K_G(x,y)=P(x,y)\one_{\{x,y\in G\}},
    \qquad x,y\in G.
\end{equation}
Let \(\pi_G=\pi(\cdot\mid G)\).  The kernel \(K_G\) is reversible and
substochastic on \((G,\pi_G)\).  Its average killing rate is
\[
    \delta_G=\pi_G(1-K_G1).
\]
By reversibility of \(P\),
\begin{equation}\label{eq:abstract-killing-rate}
    \delta_G\le\frac{\pi(G^c)}{\pi(G)}.
\end{equation}

\begin{proposition}\label{prop:abstract-zero-extension}
Assume that, for every nonnegative \(F:\mathcal X\to\mathbb R\) with
\(\supp(F)\subseteq G\),
\begin{equation}\label{eq:abstract-ambient-lsi}
    \Ent_\mu(F^2)\le A\cE_P^\mu(F,F).
\end{equation}
Then every nonnegative \(f:G\to\mathbb R\) satisfies
\begin{equation}\label{eq:abstract-killed-lsi}
    \Ent_{\pi_G}(f^2)\le A\cE_{K_G}^{\pi_G}(f,f).
\end{equation}
\end{proposition}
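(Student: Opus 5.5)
The plan is to extend \(f\) by zero and compare both sides of \eqref{eq:abstract-ambient-lsi} with the corresponding quantities for \(\pi_G\) and \(K_G\). Given nonnegative \(f:G\to\mathbb R\), define \(F:\mathcal X\to\mathbb R\) by \(F=f\) on \(G\) and \(F=0\) off \(G\). Then \(F\ge 0\) and \(\supp(F)\subseteq G\), so the hypothesis \eqref{eq:abstract-ambient-lsi} applies. The whole proof reduces to two identities,
\[
\Ent_\mu(F^2)=\mu(G)\,\Ent_{\pi_G}(f^2),\qquad \cE_P^\mu(F,F)=\mu(G)\,\cE_{K_G}^{\pi_G}(f,f),
\]
after which dividing by \(\mu(G)>0\) gives \eqref{eq:abstract-killed-lsi}. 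Note that \(\pi_G=\pi(\cdot\mid G)=\mu(\cdot\mid G)\) since \(G\subseteq\Omega\), and \(\mu(G)=\mu(\Omega)\pi(G)>0\).

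\emph{Entropy identity.} Write \(\mu(g)=\mu(G)\pi_G(g)\) for functions \(g\) supported on \(G\), and use the convention \(0\log0=0\). Then
\[
\Ent_\mu(F^2)=\mu(G)\pi_G(f^2\log f^2)-\mu(G)\pi_G(f^2)\log\bigl(\mu(G)\pi_G(f^2)\bigr).
\]
This is not exactly \(\mu(G)\Ent_{\pi_G}(f^2)\): there is an extra term \(-\mu(G)\pi_G(f^2)\log\mu(G)\). Since \(\mu(G)\le1\), this term is nonnegative, so
\[
\Ent_\mu(F^2)\ge\mu(G)\,\Ent_{\pi_G}(f^2).
\]
This inequality goes in the right direction, so the identity is not needed.

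\emph{Dirichlet form.} Expand
\[
\cE_P^\mu(F,F)=\mu(F^2)-\langle F,PF\rangle_\mu.
\]
Because \(F\) vanishes off \(G\), we have \(\mu(F^2)=\mu(G)\pi_G(f^2)\). Also
\[
\langle F,PF\rangle_\mu=\sum_{x,y\in G}\mu(x)P(x,y)f(x)f(y)=\mu(G)\langle f,K_Gf\rangle_{\pi_G}.
\]
Hence
\[
\cE_P^\mu(F,F)=\mu(G)\langle f,(I-K_G)f\rangle_{\pi_G}=\mu(G)\,\cE_{K_G}^{\pi_G}(f,f),
\]
using the substochastic form of the Dirichlet form defined above. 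This form includes the killing term: edges from \(G\) to \(G^c\) contribute \(f(x)^2\) in the symmetric-sum form, which matches exactly the killing contribution.

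Combining the two displays, \(\mu(G)\Ent_{\pi_G}(f^2)\le\Ent_\mu(F^2)\le A\,\mu(G)\,\cE_{K_G}^{\pi_G}(f,f)\), and dividing by \(\mu(G)\) finishes the proof. The only point needing care is the bookkeeping of the normalization: checking the sign of the \(\log\mu(G)\) term, and using the substochastic definition of \(\cE_{K_G}\) rather than the symmetric-sum one, which would drop the boundary edges. Otherwise the argument is routine.
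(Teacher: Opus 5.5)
Your proof is correct and follows essentially the same route as the paper's. You extend $f$ by zero and use $\Ent_\mu(F^2)=\mu(G)\Ent_{\pi_G}(f^2)+\mu(G)\pi_G(f^2)\log(1/\mu(G))\ge\mu(G)\Ent_{\pi_G}(f^2)$, then identify $\cE_P^\mu(F,F)=\mu(G)\langle f,(I-K_G)f\rangle_{\pi_G}$ and divide by $\mu(G)$. Your opening claim of an entropy \emph{identity} is superseded by the inequality you actually use, and that inequality is exactly the paper's step.
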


\begin{proof}
Extend \(f\) by zero to \(\mathcal X\):
\[
    \wt f(x)=f(x)\one_G(x).
\]
Since \(G\subseteq\Omega\) and \(\pi=\mu(\cdot\mid\Omega)\), we have
\(\mu(\cdot\mid G)=\pi_G\).  Put \(q=\mu(G)\).  Then
\[
    \Ent_\mu(\wt f^2)
    =q\Ent_{\pi_G}(f^2)+q\pi_G(f^2)\log\frac1q
    \ge q\Ent_{\pi_G}(f^2).
\]
Also
\[
\begin{aligned}
    \cE_P^\mu(\wt f,\wt f)
    &=\langle \wt f,(I-P)\wt f\rangle_\mu  \\
    &=q\langle f,(I-K_G)f\rangle_{\pi_G}
     =q\cE_{K_G}^{\pi_G}(f,f).
\end{aligned}
\]
Applying \eqref{eq:abstract-ambient-lsi} to \(\wt f\) and dividing by \(q\)
proves the claim.
\end{proof}

\begin{lemma}[Entropy decay for a substochastic chain]\label{lem:abstract-killed-entropy}
Let \(K\) be a reversible substochastic kernel on a finite probability space
\((E,\rho)\), and let
\[
    \delta=\rho(1-K1).
\]
Assume that, for all nonnegative \(f\),
\begin{equation}\label{eq:abstract-substoch-lsi}
    \Ent_\rho(f^2)\le A\cE_K^\rho(f,f).
\end{equation}
Let \(T_t=e^{t(K-I)}\) and set \(u_t=T_tu_0\) for any \(u_0\ge0\). Then
\begin{equation}\label{eq:abstract-killed-entropy-decay}
    H_\rho(u_t)
    \le
    e^{-t/A}H_\rho(u_0)
    +A\delta\,\rho(u_0)(1-e^{-t/A}).
\end{equation}
\end{lemma}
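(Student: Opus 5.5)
The plan is to differentiate \(H_\rho(u_t)\) in \(t\), bound the derivative by a linear differential inequality, and then integrate it with Gr\"onwall. Throughout write \(m_t=\rho(u_t)\). By reversibility \(\rho(Kf)=\rho(fK1)\), so
\[
\frac{d}{dt}m_t=\rho\bigl((K-I)u_t\bigr)=-\rho\bigl(u_t(1-K1)\bigr)\le 0 .
\]
Hence \(m_t\) is nonincreasing and \(m_t\le m_0=\rho(u_0)\). Positivity of \(u_t\) is preserved because \(K\ge0\). To avoid issues at zero we may assume \(u_0>0\), perturbing if necessary and letting the perturbation go to zero at the end.

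\textbf{Computing the derivative.} Write \(H_\rho(u)=\rho(u\log u)-m\log m\). Using \(\dot u=(K-I)u\),
\[
\frac{d}{dt}H_\rho(u_t)=\rho\bigl((K-I)u_t\,\log u_t\bigr)+\dot m_t-\dot m_t\log m_t-\dot m_t
=-\langle u_t,(I-K)\log u_t\rangle_\rho-\dot m_t\log m_t .
\]
The main step is a discrete chain-rule estimate for the first term. Split \(I-K=(I-\mathrm{diag}(K1))+(\mathrm{diag}(K1)-K)\). The second piece is a genuine Dirichlet form \(\frac12\sum\rho(x)K(x,y)(u(x)-u(y))(\log u(x)-\log u(y))\). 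The standard inequality \((a-b)(\log a-\log b)\ge 4(\sqrt a-\sqrt b)^2\) bounds it below by \(4\) times the corresponding form in \(\sqrt u\). The killing piece equals \(\rho\bigl((1-K1)\,u\log u\bigr)\). Adding the killing part \(\rho((1-K1)u)\) of \(\cE_K(\sqrt u,\sqrt u)\) gives
\[
\frac{d}{dt}H_\rho(u_t)\le -4\,\cE_K(\sqrt{u_t},\sqrt{u_t})+\rho\bigl((1-K1)\,u_t(1-\log u_t)\bigr)\cdots
\]
Handling these killing terms cleanly is the step I expect to be the main obstacle. To make it homogeneous, I would first normalize by \(m_t\), working with \(v=u_t/m_t\), which has \(\rho(v)=1\). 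Combining the \(\log m_t\) term with the killing terms, the contribution is at most \(\rho\bigl((1-K1)u_t(1-\log v)\bigr)\). The part where \(v\ge1\) is nonpositive after discarding the constant. Wherever \(1-\log v\) is large, I would bound it crudely by \(u(1-\log v)\le m_t\sup_{s>0} s(1-\log s)=m_t\), since \(s(1-\log s)\le1\). This gives a total killing contribution of at most \(\rho(1-K1)\,m_t=\delta m_t\le\delta\rho(u_0)\).

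\textbf{Closing the argument.} Apply the hypothesis \eqref{eq:abstract-substoch-lsi} to \(f=\sqrt{u_t}\). Then \(\Ent_\rho(u_t)=H_\rho(u_t)\le A\,\cE_K(\sqrt{u_t},\sqrt{u_t})\). Depending on how the constants land in the chain-rule step (the factor \(4\) versus \(1\)), this yields
\[
\frac{d}{dt}H_\rho(u_t)\le-\frac1A H_\rho(u_t)+\delta\rho(u_0).
\]
Any spare constant only improves the rate, and we keep the weaker rate \(1/A\) as stated. Integrating this differential inequality gives \(H_\rho(u_t)\le e^{-t/A}H_\rho(u_0)+A\delta\rho(u_0)(1-e^{-t/A})\), which is \eqref{eq:abstract-killed-entropy-decay}.
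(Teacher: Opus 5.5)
Your proposal is correct and follows essentially the same route as the paper. It uses the same derivative, the same split into a Dirichlet part bounded via $(a-b)(\log a-\log b)\ge 4(\sqrt a-\sqrt b)^2$ plus a killing part, and then the LSI for $\sqrt{u_t}$ and Gr\"onwall with $m_t\le m_0$; your $s(1-\log s)\le 1$ is exactly the paper's $s\log s\ge s-1$. That inequality holds pointwise, so the case split on $v\ge 1$ is unnecessary.

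Your displayed intermediate inequality is not valid as written, and "any spare constant only improves the rate" is wrong here. Keeping the factor $4$ on $\cE_K(\sqrt{u_t},\sqrt{u_t})$ produces the killing term $u_t(4-\log v)$, whose supremum is $e^3 m_t$, which gives a worse additive constant. As the paper does, first weaken $4$ times the Dirichlet part to $1$ times it; the argument then closes with exactly the stated constants.
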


\begin{proof}
It is enough to consider only strictly positive \(u_t\).  Let 
\(m_t=\rho(u_t)\) and \(h_t=\sqrt{u_t}\).  Differentiating gives
\[
    \frac{d}{dt}H_\rho(u_t)
    =-\cE_K^\rho\left(u_t,\log\frac{u_t}{m_t}\right).
\]
Write
\[
    w_{xy}=\rho(x)K(x,y),
    \qquad
    \kappa_x=\rho(x)(1-K1(x)).
\]
Then, for real \(f,g\),
\begin{equation}\label{eq:abstract-substoch-bilinear}
    \cE_K^\rho(f,g)
    =
    \frac12\sum_{x,y}w_{xy}(f(x)-f(y))(g(x)-g(y))
    +\sum_x\kappa_x f(x)g(x).
\end{equation}
We claim that
\begin{equation} \label{SubclaimKilledProbsRatio}
    \cE_K^\rho\left(u_t,\log\frac{u_t}{m_t}\right)
    \ge
    \cE_K^\rho(h_t,h_t)-m_t\delta.
\end{equation}
Write \(u_x=u_t(x)\), \(h_x=h_t(x)=\sqrt{u_x}\), and \(m=m_t\).  For the
first term in \eqref{eq:abstract-substoch-bilinear},
\[
\begin{aligned}
&\frac12\sum_{x,y}w_{xy}(u_x-u_y)
\left(\log\frac{u_x}{m}-\log\frac{u_y}{m}\right)  \\
&\qquad =
\frac12\sum_{x,y}w_{xy}(u_x-u_y)(\log u_x-\log u_y)  \\
&\qquad \ge
2\sum_{x,y}w_{xy}(h_x-h_y)^2
\ge
\frac12\sum_{x,y}w_{xy}(h_x-h_y)^2,
\end{aligned}
\]
where we used
\[
    (a-b)(\log a-\log b)\ge 4(\sqrt a-\sqrt b)^2.
\]
For the second term in \eqref{eq:abstract-substoch-bilinear}, applying
\(s\log s\ge s-1\) with \(s=u_x/m\) gives
\[
    u_x\log\frac{u_x}{m}
    =
    m\left(\frac{u_x}{m}\log\frac{u_x}{m}\right)
    \ge
    u_x-m.
\]
Therefore
\[
\begin{aligned}
    \sum_x\kappa_x u_x\log\frac{u_x}{m}
    &\ge
    \sum_x\kappa_x(u_x-m)  \\
    &=
    \sum_x\kappa_x h_x^2
    -
    m\sum_x\kappa_x  \\
    &=
    \sum_x\kappa_x h_x^2
    -
    m\delta,
\end{aligned}
\]
since
\[
    \sum_x\kappa_x
    =
    \sum_x\rho(x)(1-K1(x))
    =
    \rho(1-K1)
    =
    \delta.
\]
Combining the last two estimates in \eqref{eq:abstract-substoch-bilinear}
gives
\[
    \cE_K^\rho\left(u_t,\log\frac{u_t}{m_t}\right)
    \ge
    \frac12\sum_{x,y}w_{xy}(h_t(x)-h_t(y))^2
    +\sum_x\kappa_xh_t(x)^2
    -m_t\delta
    =
    \cE_K^\rho(h_t,h_t)-m_t\delta,
\]
proving Inequality \eqref{SubclaimKilledProbsRatio}.

Continuing, by \eqref{eq:abstract-substoch-lsi},
\(H_\rho(u_t)=\Ent_\rho(h_t^2)\le A\cE_K^\rho(h_t,h_t)\).  Therefore
\[
    \frac{d}{dt}H_\rho(u_t)
    \le
    -\frac1A H_\rho(u_t)+m_t\delta.
\]
Since \(m_t\) is nonincreasing, \(m_t\le m_0=\rho(u_0)\).  Gronwall's
inequality proves \eqref{eq:abstract-killed-entropy-decay}.
\end{proof}

Finally, we include the following standard estimate for future reference:

\begin{lemma}\label{lem:abstract-subprob-tv}
Let \(\lambda=u\rho\) be a subprobability on \(E\), and set
\(m=\lambda(E)=\rho(u)\).  Then
\[
    \|\lambda-\rho\|_{\TV}
    \le
    (1-m)+\sqrt{\frac{H_\rho(u)}2}.
\]
\end{lemma}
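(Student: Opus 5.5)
The plan is to split the total variation distance into a mass-deficit part and a normalized part, bounding the first trivially and the second by Pinsker's inequality. Let \(\bar\lambda=\lambda/m\) be the normalized probability measure, with density \(u/m\) with respect to \(\rho\); if \(m=0\) the statement is trivial, since \(\|\lambda-\rho\|_{\TV}\le 1=1-m\). By the triangle inequality for the total variation norm (extended to signed measures as half the \(\ell^1\) norm),
\[
\|\lambda-\rho\|_{\TV}\le\|\lambda-\bar\lambda\|_{\TV}+\|\bar\lambda-\rho\|_{\TV}.
\]

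For the first term, \(\bar\lambda-\lambda=(1/m-1)\lambda\) is a nonnegative measure of total mass \(1-m\). Hence, depending on the convention, \(\|\lambda-\bar\lambda\|_{\TV}\) equals \(\tfrac12(1-m)\) or \((1-m)\), and in either case it is at most \(1-m\).

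For the second term, apply Pinsker's inequality:
\[
\|\bar\lambda-\rho\|_{\TV}\le\sqrt{\tfrac12 D(\bar\lambda\|\rho)}.
\]
We have
\[
D(\bar\lambda\|\rho)=\rho\left[\frac um\log\frac um\right]=\frac{H_\rho(u)}{m},
\]
so this gives \(\sqrt{H_\rho(u)/(2m)}\). This carries an unwanted factor \(m^{-1/2}\), and removing it is the main obstacle.

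To remove the factor, I would avoid normalizing and instead compare \(\lambda\) with \(m\rho\):
\[
\|\lambda-\rho\|_{\TV}\le\|\lambda-m\rho\|_{\TV}+\|m\rho-\rho\|_{\TV}.
\]
The second piece is at most \(1-m\). The first equals \(m\|\bar\lambda-\rho\|_{\TV}\), and Pinsker bounds it by
\[
m\sqrt{\frac{H_\rho(u)}{2m}}=\sqrt{\frac{m\,H_\rho(u)}{2}}\le\sqrt{\frac{H_\rho(u)}{2}},
\]
using \(m\le1\). Combining the two pieces gives the claimed bound. The only point to double-check is the TV convention for signed measures: with either normalization, \(\|m\rho-\rho\|_{\TV}\le 1-m\) and \(\|\lambda-m\rho\|_{\TV}\) scales linearly in \(m\), so the argument is unaffected.
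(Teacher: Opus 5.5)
Your final argument is correct and is essentially the paper's proof: comparing $\lambda$ with $m\rho$ gives exactly the paper's bound $\|\lambda-\rho\|_{\TV}\le(1-m)+m\|\widehat\lambda-\rho\|_{\TV}$, after which Pinsker's inequality, the identity $H_\rho(u)=m\,D(\widehat\lambda\Vert\rho)$, and $m\le 1$ finish the proof. Your first decomposition through $\bar\lambda$ loses a factor $m^{-1/2}$ and should simply be dropped from the write-up.
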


\begin{proof}
If \(m=0\), the claim is trivial.  Otherwise write
\(\widehat\lambda=\lambda/m\).  Then
\[
    \|\lambda-\rho\|_{\TV}
    \le
    (1-m)+m\|\widehat\lambda-\rho\|_{\TV}.
\]
Pinsker's inequality gives
\[
    \|\widehat\lambda-\rho\|_{\TV}
    \le \sqrt{D(\widehat\lambda\Vert\rho)/2},
\]
and \(H_\rho(u)=mD(\widehat\lambda\Vert\rho)\).  Since \(m\le1\), the result
follows.
\end{proof}

\subsection{Mixing condition}\label{subsec:abstract-mixing}

We keep notation from the previous subsection. This subsection compares the original chain after burn-in with the chain killed on leaving the good set \(G\). The comparison lets entropy decay for the killed chain be transferred back to the original chain, provided paths rarely leave \(G\).

For $s \in \mathbb{N}$ and \(x\in\Omega\), define
\[
    \alpha_s^x=\delta_xP^s,
    \qquad
    \alpha_{s,G}^x(A)=\alpha_s^x(A\cap G).
\]
Let
\[
    \widetilde\alpha_{s,t}^x=\alpha_s^x e^{t(P-I)},
    \qquad
    \lambda_{s,t}^x=\alpha_{s,G}^x e^{t(K_G-I)}.
\]
Thus \(\widetilde\alpha_{s,t}^x\) is the original continuous-time evolution
after a discrete burn-in, while \(\lambda_{s,t}^x\) is the killed evolution
started from the restriction of \(\alpha_s^x\) to \(G\). In the following, $\operatorname{Poi}(t)$ denotes the Poisson distribution with rate $t$.

\begin{lemma}\label{lem:abstract-path-comparison}
For every integer \(s\ge0\), integer \(L\ge1\), and \(t\ge0\),
\begin{equation}\label{eq:abstract-path-comparison}
    \|\widetilde\alpha_{s,t}^x-\lambda_{s,t}^x\|_{\TV}
    \le
    \Prob_x(\exists\,u\in\{0,1,\ldots,L\}:X_{s+u}\notin G)
    +\Prob(\operatorname{Poi}(t)>L).
\end{equation}
The same right-hand side also bounds \(1-\lambda_{s,t}^x(G)\).
\end{lemma}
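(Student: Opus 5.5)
The plan is a coupling of the two evolutions through the Poisson representation of the continuous-time semigroup. Write
\[
e^{t(P-I)}=\sum_{m\ge0}\Prob(\operatorname{Poi}(t)=m)P^m,
\qquad
e^{t(K_G-I)}=\sum_{m\ge0}\Prob(\operatorname{Poi}(t)=m)K_G^m .
\]
Let \(M\sim\operatorname{Poi}(t)\) be independent of the discrete chain \((X_r)_{r\ge0}\) started at \(x\). Then \(\widetilde\alpha_{s,t}^x\) is the law of \(X_{s+M}\).

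For the killed evolution, note that for \(y,z\in G\),
\[
K_G^m(y,z)=\Prob_y\bigl(X_1,\ldots,X_m\in G,\ X_m=z\bigr).
\]
Using this together with \(\alpha_{s,G}^x(A)=\alpha_s^x(A\cap G)\) and the Markov property, we get
\[
\lambda_{s,t}^x(A)=\Prob_x\bigl(X_{s+u}\in G\ \text{for all }0\le u\le M,\ X_{s+M}\in A\bigr).
\]
So \(\lambda_{s,t}^x\) is the subprobability obtained by restricting the same random variable \(X_{s+M}\) to the event
\[
\mathcal A=\{X_{s+u}\in G\ \text{for all }0\le u\le M\}.
\]

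It follows that \(\widetilde\alpha_{s,t}^x-\lambda_{s,t}^x\) is the nonnegative measure \(A\mapsto\Prob(X_{s+M}\in A,\ \mathcal A^c)\). Its total variation norm, under any of the usual conventions (in particular \(\sup_A|\cdot|\)), is at most \(\Prob(\mathcal A^c)\). The same identity gives
\[
1-\lambda_{s,t}^x(G)=1-\Prob(\mathcal A)=\Prob(\mathcal A^c),
\]
because on \(\mathcal A\) we have \(X_{s+M}\in G\) automatically.

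It remains to bound \(\Prob(\mathcal A^c)\), and I split on \(\{M\le L\}\). On \(\{M\le L\}\), the event \(\mathcal A^c\) forces some \(u\le L\) with \(X_{s+u}\notin G\). Otherwise \(M>L\). Hence
\[
\Prob(\mathcal A^c)\le\Prob_x\bigl(\exists\,u\in\{0,\ldots,L\}:X_{s+u}\notin G\bigr)+\Prob(\operatorname{Poi}(t)>L),
\]
where the first term uses the independence of \(M\) from the chain only to drop the conditioning. This is exactly \eqref{eq:abstract-path-comparison}.

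The only delicate point is the path representation of \(K_G^m\). It must be checked that killing happens exactly when a step lands outside \(G\), which is what \eqref{eq:abstract-killed-kernel} says. It must also be checked that the initial restriction to \(G\) at time \(s\) corresponds to the \(u=0\) condition. Both are immediate by induction on \(m\), so I expect no real obstacle; the lemma is essentially bookkeeping.
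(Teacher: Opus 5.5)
Your proposal is correct and is essentially the paper's argument. The paper also couples the two evolutions through the \(\operatorname{Poi}(t)\) number of jumps after time \(s\), notes that they agree on trajectories that stay in \(G\) during \(s,s+1,\ldots,s+r\), and splits according to whether \(r\le L\). Your identification of \(\lambda_{s,t}^x\) as the law of \(X_{s+M}\) restricted to the survival event \(\mathcal A\) just makes that coupling explicit; the final union bound does not even need independence of \(M\) from the chain, which is used only for the two law representations.
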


\begin{proof}
This is a simple coupling argument. Note that the number of jumps made in any time interval by the original chain is a Poisson process. Conditional on making \(r\) jumps
after time \(s\), the original and killed chains agree on every trajectory that
stays in \(G\) during the discrete interval \(s,s+1,\ldots,s+r\).  For
\(r\le L\), disagreement is contained in the event displayed in
\eqref{eq:abstract-path-comparison}; for \(r>L\), use the trivial bound \(1\).
Averaging over \(r\sim\operatorname{Poi}(t)\) gives the total variation bound,
and the same coupling gives the bound on the lost mass of the killed chain.
\end{proof}

We now have the main generic result. We restate some of the basic assumptions in the theorem statement for ease of reference:

\begin{theorem}\label{thm:abstract-good-set}
Let \(P\) be a coordinate-replacement kernel as in
\eqref{eq:abstract-fibre-decomposition} on \(\mathcal X=S^N\), with invariant
set \(\Omega\), product law \(\mu\), stationary law \(\pi=\mu(\cdot\mid\Omega)\),
and good set \(G\subseteq\Omega\).  Assume \(\pi(G)>0\).  Suppose that the
following hold.
\begin{enumerate}[label=\textup{(\roman*)}, leftmargin=*]
    \item There is a constant \(A<\infty\) such that, for every nonnegative
    \(F:\mathcal X\to\mathbb R\) with \(\supp(F)\subseteq G\),
    \[
        \Ent_\mu(F^2)\le A\cE_P^\mu(F,F).
    \]
    \item There are \(t_*\ge0\), an integer \(L\ge1\), and \(\eta\ge0\) such
    that, uniformly in \(x\in\Omega\) and all integers \(s\ge t_*\),
    \[
        \Prob_x(\exists\,u\in\{0,1,\ldots,L\}:X_{s+u}\notin G)\le\eta.
    \]
\end{enumerate}
Set
\begin{equation}\label{eq:abstract-tconf}
    t_{\mathrm{conf}}=2A\log(e+\log|\Omega|),
\end{equation}
and define
\begin{equation}\label{eq:abstract-error-defs}
    \zeta=\Prob(\operatorname{Poi}(t_{\mathrm{conf}})>L),
    \qquad
    R=e^{-t_{\mathrm{conf}}/A}\log|\Omega|+A\frac{\pi(G^c)}{\pi(G)}.
\end{equation}
Then, for every \(x\in\Omega\) and every integer \(s\ge t_*\),
\begin{equation}\label{eq:abstract-after-burnin}
    \left\|\delta_xP^s e^{t_{\mathrm{conf}}(P-I)}-\pi\right\|_{\TV}
    \le 2(\eta+\zeta)+\sqrt{R/2}+\pi(G^c).
\end{equation}
Consequently, if
\begin{equation}\label{eq:abstract-continuous-condition}
    2(\eta+\zeta)+\sqrt{R/2}+\pi(G^c)
    +\Prob(\operatorname{Poi}(2t_*)<t_*)\le\frac14,
\end{equation}
then the continuous-time chain satisfies
\[
    t_{\mathrm{mix}}^{\mathrm{cont}}(1/4)\le 2t_*+t_{\mathrm{conf}}.
\]
\end{theorem}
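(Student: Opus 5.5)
The plan is to chain the lemmas of this section: a triangle inequality through the killed evolution, followed by a time-splitting argument that passes from the discrete burn-in to continuous time.

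\emph{Step 1: reduce to the killed chain.} Fix \(x\in\Omega\) and an integer \(s\ge t_*\), and write \(t=t_{\mathrm{conf}}\). By the triangle inequality,
\[
\|\widetilde\alpha_{s,t}^x-\pi\|_{\TV}\le\|\widetilde\alpha_{s,t}^x-\lambda_{s,t}^x\|_{\TV}+\|\lambda_{s,t}^x-\pi_G\|_{\TV}+\|\pi_G-\pi\|_{\TV}.
\]
The last term equals \(\pi(G^c)\). Lemma~\ref{lem:abstract-path-comparison}, combined with hypothesis (ii), bounds the first term by \(\eta+\zeta\).

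\emph{Step 2: control the middle term by entropy.} Write \(\lambda_{s,t}^x=u_t\pi_G\), where \(u_0\) is the density of \(\alpha^x_{s,G}\) with respect to \(\pi_G\), and \(u_t=T_tu_0\) with \(T_t=e^{t(K_G-I)}\). This uses reversibility of \(K_G\), so that evolving measures is the same as evolving densities. Hypothesis (i) together with Proposition~\ref{prop:abstract-zero-extension} gives the LSI \eqref{eq:abstract-killed-lsi} with constant \(A\). Lemma~\ref{lem:abstract-killed-entropy} then yields
\[
H(u_t)\le e^{-t/A}H(u_0)+A\delta_G\rho(u_0).
\]
Here \(\rho(u_0)\le1\), and \(\delta_G\le\pi(G^c)/\pi(G)\) by \eqref{eq:abstract-killing-rate}. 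Moreover \(H_{\pi_G}(u_0)\le\log(1/\pi_G(\cdot)_{\min})\le\log|G|\le\log|\Omega|\), because a subprobability's relative entropy against the uniform law on \(G\) is at most \(\log|G|\) (using \(m\le1\) and \(m\log(1/m)\) nonpositive contributions handled with care). Hence \(H(u_t)\le R\). Lemma~\ref{lem:abstract-subprob-tv} then gives
\[
\|\lambda_{s,t}^x-\pi_G\|_{\TV}\le(1-\lambda^x_{s,t}(G))+\sqrt{R/2}.
\]
By the last sentence of Lemma~\ref{lem:abstract-path-comparison}, the lost mass is again at most \(\eta+\zeta\). Summing the pieces gives \eqref{eq:abstract-after-burnin}.

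\emph{Step 3: pass to continuous time.} The continuous-time chain at time \(2t_*+t_{\mathrm{conf}}\) is \(\delta_xe^{2t_*(P-I)}e^{t_{\mathrm{conf}}(P-I)}\), which is a Poisson mixture \(\sum_s\Prob(\operatorname{Poi}(2t_*)=s)\,\delta_xP^se^{t_{\mathrm{conf}}(P-I)}\). By convexity of total variation, we apply \eqref{eq:abstract-after-burnin} to the terms with \(s\ge t_*\) and bound the remaining terms by \(1\), times \(\Prob(\operatorname{Poi}(2t_*)<t_*)\). Condition \eqref{eq:abstract-continuous-condition} then gives distance at most \(1/4\).

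I expect the main obstacle to be the bookkeeping in Step 2. The first point is checking that the densities evolve under \(K_G\) itself, which needs symmetry of \(K_G\) in \(L^2(\pi_G)\). The second point is that \(H_{\pi_G}(u_0)\le\log|\Omega|\) for a subprobability density. For this I would write \(u_0=m\hat u\), note that \(H(u_0)=mD(\hat u\pi_G\Vert\pi_G)\le\log|G|\), and then observe that \(t_{\mathrm{conf}}\) was chosen so that \(e^{-t_{\mathrm{conf}}/A}\log|\Omega|\) is small.
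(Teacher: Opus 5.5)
Your proposal is correct and follows essentially the same route as the paper: the triangle inequality through the killed evolution \(\lambda_{s,t}^x\), Proposition~\ref{prop:abstract-zero-extension} feeding Lemma~\ref{lem:abstract-killed-entropy} with \(H_{\pi_G}(u_0)=mD(\hat u\,\pi_G\Vert\pi_G)\le\log|G|\le\log|\Omega|\), then Lemma~\ref{lem:abstract-subprob-tv} combined with the lost-mass bound from Lemma~\ref{lem:abstract-path-comparison}, and finally Poissonization of the burn-in for the continuous-time statement. The clean version of the entropy bound is the one in your last paragraph, so the earlier parenthetical about \(m\log(1/m)\) terms can be dropped.
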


\begin{proof}
Write \(\pi_G=\pi(\cdot\mid G)\).  By
Proposition~\ref{prop:abstract-zero-extension}, applied to \(P\) and \(G\),
the killed kernel satisfies
\[
    \Ent_{\pi_G}(f^2)\le A\cE_{K_G}^{\pi_G}(f,f).
\]
Let \(\lambda_0\) be a subprobability on \(G\), let
\(u_0=d\lambda_0/d\pi_G\), and let \(u_t=e^{t(K_G-I)}u_0\).  If
\(m=\pi_G(u_0)>0\), then
\[
    H_{\pi_G}(u_0)=mD(\lambda_0/m\Vert\pi_G)
    \le \log |G|\le\log|\Omega|.
\]
If \(m=0\), the entropy is zero.  Lemma~\ref{lem:abstract-killed-entropy} and
\eqref{eq:abstract-killing-rate} give
\[
    H_{\pi_G}(u_t)
    \le e^{-t/A}\log|\Omega|+A\frac{\pi(G^c)}{\pi(G)}.
\]
At \(t=t_{\mathrm{conf}}\), the right-hand side is \(R\).

Now take \(\lambda_0=\alpha_{s,G}^x\).  Lemma~\ref{lem:abstract-path-comparison},
applied with \(t=t_{\mathrm{conf}}\), gives
\[
    \|\widetilde\alpha_{s,t_{\mathrm{conf}}}^x
      -\lambda_{s,t_{\mathrm{conf}}}^x\|_{\TV}\le \eta+\zeta,
    \qquad
    1-\lambda_{s,t_{\mathrm{conf}}}^x(G)\le\eta+\zeta.
\]
Combining the entropy bound with Lemma~\ref{lem:abstract-subprob-tv} gives
\[
    \|\lambda_{s,t_{\mathrm{conf}}}^x-\pi_G\|_{\TV}
    \le \eta+\zeta+\sqrt{R/2}.
\]
Finally, \(\|\pi_G-\pi\|_{\TV}=\pi(G^c)\).  The triangle inequality proves
\eqref{eq:abstract-after-burnin}.

For continuous time, let \(J\sim\operatorname{Poi}(2t_*)\).  Poissonization gives
\[
    \delta_x e^{(2t_*+t_{\mathrm{conf}})(P-I)}
    =\E\left[\delta_xP^J e^{t_{\mathrm{conf}}(P-I)}\right].
\]
On the event \(\{J\ge t_*\}\), use \eqref{eq:abstract-after-burnin}; on the complement,
use the trivial bound.  Condition~\eqref{eq:abstract-continuous-condition}
proves the continuous-time bound.
\end{proof}

\section{The \texorpdfstring{$k$}{k}-column transvection walk}\label{sec:transvection}

\subsection{Basic notation and facts about the transvection chain}\label{subsec:transvection-chain}
We recall some notation from the introduction, then introduce notation that will be used in the proofs. Fix \(1\le k\le n\) and set
\[
    V=\Ft^k,
    \qquad
    \Stief{n}{k}=\{z=(z_1,\ldots,z_n)\in V^n:\Span(z_1,\ldots,z_n)=V\}.
\]
Let \(\pi=\pi_{n,k}\) be uniform on \(\Stief{n}{k}\), and let \(\mu\) be
uniform on \(V^n\).  For ordered distinct indices \(a,b\in[n]\), define
\[
    z^{a\to b}_\ell=
    \begin{cases}
    z_b+z_a, & \ell=b,\\
    z_\ell, & \ell\ne b.
    \end{cases}
\]
We call \(a\) the donor and \(b\) the recipient. We define the kernel for the transvection walk
\begin{equation}\label{eq:tr-kernel}
    P=P_{n,k},
    \qquad
    Pf(z)=\frac1{n(n-1)}\sum_{a\ne b}f(z^{a\to b}).
\end{equation}
The maps \(z\mapsto z^{a\to b}\) are \(\mu\)-preserving involutions; hence \(P\)
is reversible with respect to \(\mu\) on \(V^n\) and with respect to \(\pi\) on
\(\Stief{n}{k}\). In particular, row addition preserves the span of the rows, so it preserves \(\Stief{n}{k}\). 

We also claim the chain is irreducible on $\Stief{n}{k}$. To see this, note that over $\Ft$ you can write the row-swap elementary row operation $(r_{i}, r_{j}) \to (r_{j}, r_{i})$ as a sequence of the three additions $(r_{i},r_{j}) \to (r_{i} + r_{j}, r_{j}) \to (r_{i} + r_{j}, r_{i}) \to (r_{j},r_{i})$; this means that all elementary row operations can be written in terms of the moves of this Markov chain.

In the notation of Section~\ref{sec:framework}, the
coordinate space is \(S=V\), the number of coordinates is \(N=n\), and the
recipient-coordinate fibre kernel is obtained by conditioning on the recipient coordinate and projecting the update to that fibre:
\begin{equation}\label{eq:tr-fibre-kernel}
    K_i\varphi(u)=\frac1{n-1}\sum_{j\ne i}\varphi(u+z_j).
\end{equation}

\subsection{Description of the good set}\label{subsec:transvection-good-set}
Let \(V^*\) be the dual of the vector space $V$.  For \(\xi\in V^*\), define
\[
    \chi_\xi(u)=(-1)^{\xi(u)},
    \qquad
    S_\xi(z)=\sum_{i=1}^n\chi_\xi(z_i).
\]
Define the good set
\begin{equation}\label{eq:tr-good-set}
    \Good_{\mathrm{tr}}
    =\left\{z\in\Stief{n}{k}:
    \max_{\xi\in V^*\setminus\{0\}}|S_\xi(z)|\le\frac n4\right\}.
\end{equation}

\begin{lemma}[Stationary size of the transvection good set]\label{lem:tr-good-stationary}
There are absolute constants \(\varepsilon_0>0\) and \(c_0>0\) such that, if
\(k\le\varepsilon_0 n\), then
\[
    \pi(\Good_{\mathrm{tr}}^c)\le e^{-c_0 n} 
\]
and
\[
    \mu(\Good_{\mathrm{tr}}^{c}) \le e^{-c_{0} n}.
\]
\end{lemma}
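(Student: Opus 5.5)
Under $\mu$ the rows $z_1,\dots,z_n$ are i.i.d.\ uniform on $V$. The plan is to bound $\mu(\Good_{\mathrm{tr}}^c)$ directly by a Hoeffding estimate plus a union bound over functionals, handle the spanning condition separately, and then transfer the bound to $\pi$ by conditioning.

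First, fix $\xi\in V^*\setminus\{0\}$. The variables $\chi_\xi(z_i)$ are i.i.d.\ uniform signs, because $\xi(z_i)$ is a uniform bit when $\xi\ne0$. Hoeffding's inequality therefore gives
\[
\mu\bigl(|S_\xi(z)|>n/4\bigr)\le 2e^{-n/32}.
\]
A union bound over the $2^k-1$ nonzero functionals then gives
\[
\mu\Bigl(\max_{\xi\ne0}|S_\xi|>n/4\Bigr)\le 2^{k+1}e^{-n/32}.
\]
This is at most $2e^{-n/64}$ as soon as $k\log 2\le n/64$. This is where the hypothesis $k\le\varepsilon_0 n$ enters, and we can take $\varepsilon_0=1/100$.

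Second, we need the spanning condition. The rows fail to span $V$ exactly when there is some $\xi\ne0$ with $\xi(z_i)=0$ for all $i$, that is, $S_\xi(z)=n$. That event is already contained in the imbalance event from the first step. Alternatively, one can bound it directly by $(2^k-1)2^{-n}$. Either way, we obtain
\[
\mu(V^n\setminus\Stief{n}{k})\le 2^{k-n},
\]
and consequently
\[
\mu(\Good_{\mathrm{tr}}^c)\le 2^{k+1}e^{-n/32}+2^{k-n}\le e^{-c_0n}
\]
for a suitable $c_0$ and all $n$ large enough. Small $n$ can be absorbed by shrinking $c_0$, or else one just states the bound for $n$ large.

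Third, pass to $\pi=\mu(\cdot\mid\Stief{n}{k})$. Since $\Good_{\mathrm{tr}}\subseteq\Stief{n}{k}$, we have
\[
\pi(\Good_{\mathrm{tr}}^c)=\frac{\mu(\Stief{n}{k}\setminus\Good_{\mathrm{tr}})}{\mu(\Stief{n}{k})}\le\frac{\mu(\Good_{\mathrm{tr}}^c)}{1-2^{k-n}}.
\]
The denominator is at least $1/2$ when $k\le\varepsilon_0 n$ and $n\ge2$. The resulting factor of $2$ is absorbed by decreasing $c_0$ slightly.

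The only real constraint in the argument is balancing the union-bound factor $2^k$ against the Hoeffding exponent $e^{-n/32}$. That is exactly why a linear restriction $k\le\varepsilon_0 n$ is needed. Beyond that there is no genuine obstacle; the remaining steps are bookkeeping of constants.
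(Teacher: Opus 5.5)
Your proof is correct. For $\mu$ it is the paper's argument: Hoeffding for each fixed $\xi\neq0$, a union bound over the $2^k-1$ nonzero functionals, and the observation that a non-spanning tuple has $S_\xi(z)=n$ for some $\xi\neq0$, so the spanning constraint is absorbed into the imbalance event.

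For $\pi$ you take a different and more elementary route. The paper first shows, using transitivity of $\GLn$ on $\Ft^n\setminus\{0\}$, that the projection $z\mapsto(\xi(z_1),\ldots,\xi(z_n))$ pushes $\pi$ forward to the uniform law on $\Ft^n\setminus\{0\}$. It then applies Hoeffding to a Rademacher sum conditioned on the underlying bit vector being nonzero, and union bounds under $\pi$. You instead union bound once under the product measure and condition globally, via $\pi=\mu(\cdot\mid\Stief{n}{k})$ and $\mu(\Stief{n}{k})\ge 1-2^{k-n}$. This needs no information about the marginals of $\pi$.

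What the paper's route buys is the structural fact that each $\Pi_\xi$ maps $\pi$ to the stationary law of the one-column walk, which is the viewpoint of the burn-in Lemma~\ref{lem:tr-burnin}. However, the estimate actually invoked there is only the tail bound under $\Unif(\Ft^n\setminus\{0\})$, and your conditioning argument gives that just as directly.

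One small tightening: you do not need the large-$n$ caveat, because $1\le k\le\varepsilon_0 n$ already forces $n\ge 1/\varepsilon_0=100$. Using the containment you noted, your argument gives $\mu(\Good_{\mathrm{tr}}^c)\le 2e^{-n/64}$ and $\pi(\Good_{\mathrm{tr}}^c)\le 4e^{-n/64}$. Both are at most $e^{-c_0 n}$ for every admissible $n$ with, for example, $c_0=10^{-3}$. This matches the statement as written, which has no large-$n$ proviso.
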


\begin{proof}
Fix \(\xi\ne0\).  We claim that under \(\pi\), the projection
\[
    z\mapsto(\xi(z_1),\ldots,\xi(z_n))
\]
is uniform on \(\Ft^n\setminus\{0\}\).  To see this, choose coordinates so that
\(\xi\) is projection onto the first coordinate.  For any two nonzero vectors
\(q,q'\in\Ft^n\), choose \(A\in\mathrm{GL}_n(\Ft)\) with \(Aq=q'\).  Left
multiplication by \(A\) bijects the full-rank \(n\times k\) matrices whose first
column is \(q\) with those whose first column is \(q'\).  Hence the number of
rank-\(k\) completions is independent of \(q\), and the pushforward of \(\pi\)
is uniform on \(\Stief{n}{1}=\Ft^n\setminus\{0\}\).

Consequently, \(S_\xi\) has the law of a sum of \(n\) independent Rademacher
variables, conditioned on the vector not being identically zero.  The
conditioning event has probability at least \(1/2\), so Hoeffding's inequality
gives
\[
    \pi(|S_\xi|>n/4)\le Ce^{-cn}.
\]
A union bound over the \(2^k-1\) choices of $\xi \in V^{\ast} \backslash \{0\}$ gives
\[
    \pi(\Good_{\mathrm{tr}}^c)\le C2^k e^{-cn}.
\]
Choose \( 0 < \varepsilon_0 < 1\) small enough that
\(\varepsilon_0\log2<c/2\), and then choose \(c_0<c/2\).  If
\(k\le\varepsilon_0n\), the last display is at most \(e^{-c_0n}\) for all large
\(n\), after decreasing \(c_0\) if necessary to absorb the leading constant.

The second bound is very similar. Let \(Z\sim\mu\) and observe that
\[
    \left\{z\in V^n:
    \max_{\xi\in V^*\setminus\{0\}} |S_\xi(z)|\le \frac n4
    \right\}
    \subseteq \Stief{n}{k}.
\]
Indeed, if \(z\notin\Stief{n}{k}\), then
\(\Span(z_1,\ldots,z_n)\ne V\), so there is some nonzero
\(\xi\in V^*\) such that \(\xi(z_i)=0\) for every \(i\).  For this \(\xi\),
\[
    S_\xi(z)=\sum_{i=1}^n (-1)^{\xi(z_i)}=n,
\]
which is larger than \(n/4\).  Hence, as subsets of the ambient space \(V^n\),
\[
    \Good_{\mathrm{tr}}
    =
    \left\{z\in V^n:
    \max_{\xi\in V^*\setminus\{0\}} |S_\xi(z)|\le \frac n4
    \right\}.
\]

Now fix \(0\ne\xi\in V^*\).  Under \(\mu\), the random variables
\(\chi_\xi(Z_1),\ldots,\chi_\xi(Z_n)\) are independent Rademacher variables.
Thus Hoeffding's inequality gives
\[
    \mu\left(|S_\xi(Z)|>\frac n4\right)\le C e^{-cn}.
\]
Taking a union bound over the \(2^k-1\) nonzero functionals gives
\[
    \mu(\Good_{\mathrm{tr}}^c)
    \le
    \sum_{\xi\in V^*\setminus\{0\}}
    \mu\left(|S_\xi(Z)|>\frac n4\right)
    \le C2^k e^{-cn}.
\]
After decreasing \(\varepsilon_0\) and \(c_0\), if necessary, this is at most
\(e^{-c_0n}\) whenever \(k\le\varepsilon_0 n\), for all sufficiently large
\(n\).
\end{proof}

\subsection{Burn-in}\label{subsec:transvection-burnin}

We will prove our burn-in result by finding many copies of the $1$-column walk inside of the $k$-column walk. We repeatedly use the following mixing estimate from \cite[Theorem~1.1]{PeresTanakaZhai2020} (see also \cite[Theorem~1]{BenHamouPeres2018}):

\begin{lemma}\label{lem:tr-one-column-input}
For every fixed \(B<\infty\) there is a constant \(c_B<\infty\) such that the
one-column transvection walk on \(\Ft^n\setminus\{0\}\), started from any nonzero
vector, is within \(n^{-B}\) of stationarity after \(c_Bn\log n\) steps.
\end{lemma}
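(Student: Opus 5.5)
This is the one-column case of \cite[Theorem~1.1]{PeresTanakaZhai2020}; see also \cite[Theorem~1]{BenHamouPeres2018}. In the paper it is essentially imported, so the plan is to extract it from those results. The one subtlety is the \emph{polynomial} precision \(n^{-B}\) at time \(O(n\log n)\), which a \(\tfrac14\)-mixing statement alone does not give. Submultiplicativity \(\bar d(\ell t)\le \bar d(t)^\ell\) would only yield \(O(n\log^2 n)\) steps. So I would rerun a coupling argument whose coupling time has a polynomially small tail at \(c_B n\log n\).

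\textbf{Step 1: the Hamming weight.} Let \(W_t\) be the Hamming weight of the current vector \(x\in\Ft^n\setminus\{0\}\). A step with donor \(a\) and recipient \(b\) changes \(W\) only if \(x_a=1\). It raises \(W\) by one with probability \(W(n-W)/(n(n-1))\) and lowers it by one with probability \(W(W-1)/(n(n-1))\). Hence \(W\) is a Markov chain on \(\{1,\ldots,n\}\) with an Ehrenfest-type drift towards \(n/2\), and its stationary law is the Binomial\((n,1/2)\) law conditioned on \(W\ge1\).

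\textbf{Step 2: couple the weights.} From \(W_0=1\), the weight grows geometrically at rate about \(1/n\) per step while \(W\ll n\). It therefore reaches the window \(|W-n/2|\le n/4\) within \(O(n\log n)\) steps, with failure probability \(n^{-B}\) by martingale concentration for \(\log W_t\). After that, a monotone coupling of two weight chains coalesces in \(O(n\log n)\) steps with polynomially small failure probability. The argument is the standard contraction for Ehrenfest-like chains: \(\E|W-W'|\) contracts by a factor \(1-c/n\) per step.

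\textbf{Step 3: couple the positions of the ones.} Once the weights agree, run the two chains with the same recipient \(b\). Pair the donors by a bijection that matches the one-entries of \(x\) with those of \(y\) and the zero-entries with each other. The weights then stay equal, and a recipient coordinate where \(x_b\ne y_b\) is repaired with probability bounded below whenever \(W\) lies in the window. The number of disagreeing coordinates then decreases like a coupon collector. Coupon-collector tails give coalescence within \(c_B n\log n\) further steps except with probability \(n^{-B}\).

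\textbf{Conclusion and main obstacle.} A union bound over the failure events of Steps 2 and 3, and over the chains leaving the window (which is exponentially unlikely over polynomially many steps), gives total variation at most \(n^{-B}\) after \(c_B n\log n\) steps.

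The hard part is Step 3: checking that the donor matching really repairs a disagreement with probability bounded below at each relevant step, and does not create new disagreements faster. If this bookkeeping fails, I would instead quote the quantitative coupling estimates in the proof of \cite[Theorem~1]{BenHamouPeres2018}, whose exponential tails give the \(n^{-B}\) precision directly.
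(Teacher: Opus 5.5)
The paper does not prove this lemma; it imports it from \cite[Theorem~1.1]{PeresTanakaZhai2020} and \cite[Theorem~1]{BenHamouPeres2018}. A self-contained coupling is therefore a genuinely different route. Your observation that a fixed-precision statement plus submultiplicativity only reaches $n^{-B}$ after $O(B\,n\log^2 n)$ steps is correct and relevant: the $n^{-B}$ precision at time $O(n\log n)$ is what keeps $t_*$ in Lemma~\ref{lem:tr-burnin} at $O(nk\log n)$ for small $k$.

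\textbf{The gap is in Step~3.} As written it cannot work, and this is a real obstruction rather than bookkeeping. If $W(x)=W(y)$, the disagreement set $D$ splits into $D_{10}=\{j:x_j=1,\,y_j=0\}$ and $D_{01}=\{j:x_j=0,\,y_j=1\}$ of equal size. In general $W(x)-W(y)\equiv|D|\pmod 2$. With a common recipient $b$, only coordinate $b$ changes in either chain, so $|D|$ moves by at most one per step. Hence no step can shrink $D$ while keeping the weights equal.

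Concretely, for $b\in D_{10}$ the vector $x$ has $W-1$ ones on $[n]\setminus\{b\}$ while $y$ has $W$, so your ones-to-ones donor bijection does not exist. The nearest matching repairs $b$ only with probability $1/(n-1)$. Otherwise it either does nothing or flips both $x_b$ and $y_b$, which keeps the disagreement and opens a weight gap of $2$. If you instead couple donors to maximize agreement at $b$, the weights separate. Each agreeing recipient then creates a new disagreement with probability $|W(x)-W(y)|/(n-1)$, so you would have to control the weight gap jointly with $|D|$, and the plain coupon-collector argument no longer applies.

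\textbf{A fix.} Use \emph{different} recipients on $D$. At each step fix an involution $\sigma$ of $[n]$ that exchanges $D_{10}$ and $D_{01}$ and fixes all other coordinates. If $x$ uses recipient $b$, let $y$ use $\sigma(b)$, which is still uniform. Since $x_b=y_{\sigma(b)}$, the restriction of $x$ to $[n]\setminus\{b\}$ and of $y$ to $[n]\setminus\{\sigma(b)\}$ have equally many ones, so the donors can be matched ones-to-ones. A one-donor then flips $x_b$ and $y_{\sigma(b)}$ together. For $b\notin D$ agreement is preserved; for $b\in D$ both $b$ and $\sigma(b)$ are repaired. Thus the weights stay equal and $|D|$ never increases. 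It drops by $2$ with probability at least $|D|(W-1)/(n(n-1))\ge |D|/(5n)$ while $W\ge n/4$, which gives the coupon-collector tail you want.

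\textbf{A smaller error in Step~2.} The weight drift is $f(w)=w(n+1-2w)/(n(n-1))$. Under a monotone coupling,
$\E[\Delta(W'-W)]=(W'-W)\bigl(n+1-2(W+W')\bigr)/(n(n-1))$,
which contracts by $1-c/n$ only when $W+W'\ge(1+c)n/2$, roughly. Near the lower edge $n/4$ of your window there is no contraction, so work in a narrower window such as $|W-n/2|\le n/8$.

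With these changes your outline proves the lemma. The constant $c_B$ must grow linearly in $B$, because the time to leave weight $1$ is already geometric with mean $n$.

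Your fallback of quoting \cite{BenHamouPeres2018} is what the paper effectively does. It gives $n^{-B}$ only if the error in their window bound decays exponentially in the window parameter, which you would still need to verify.
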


Our main burn-in estimate is: 

\begin{lemma}[Burn-in and occupancy of the good set]\label{lem:tr-burnin}
Let \(\varepsilon_0\) be as in Lemma~\ref{lem:tr-good-stationary}.  For every
fixed \(M<\infty\) there is a constant \(C_M<\infty\) such that the following
holds.  Assume \(1\le k\le\varepsilon_0 n\) and set
\begin{equation}\label{eq:tr-tstar}
    t_*=C_M nk\log n.
\end{equation}
Then, uniformly in \(x\in\Stief{n}{k}\) and all integer times
\(t\ge t_*\),
\begin{equation}\label{eq:tr-burnin-one-time}
    \Prob_x(X_t\notin\Good_{\mathrm{tr}})\le n^{-M}.
\end{equation}
Consequently, if \(L\le n^3\) and \(M=10\), then uniformly in \(x\) and
\(s\ge t_*\),
\begin{equation}\label{eq:tr-sojourn}
    \Prob_x(\exists\,u\in\{0,1,\ldots,L\}:X_{s+u}\notin\Good_{\mathrm{tr}})=o(1).
\end{equation}
\end{lemma}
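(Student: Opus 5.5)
The plan is to reduce everything to the one-column walk through linear functionals, and then take a union bound over the $2^k-1$ nonzero functionals. The one-column estimate, Lemma~\ref{lem:tr-one-column-input}, only gives polynomial accuracy, while the union bound costs a factor $2^k$. Boosting that polynomial accuracy to accuracy exponentially small in $k$ is exactly what the extra factor $k$ in $t_*$ pays for.

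\textbf{Step 1 (projection).} Fix $0\ne\xi\in V^*$ and set $\Xi(z)=(\xi(z_1),\ldots,\xi(z_n))\in\Ft^n$. Since $\xi(z_b+z_a)=\xi(z_b)+\xi(z_a)$, we have $\Xi(z^{a\to b})=\Xi(z)^{a\to b}$. The donor--recipient pair is chosen independently of the state, so $\Xi(X_t)$ is itself a Markov chain, namely the one-column transvection walk. If $z\in\Stief{n}{k}$, then $\Xi(z)\ne0$, since otherwise $\xi$ would vanish on the span $V$. Hence this projected chain lives on $\Ft^n\setminus\{0\}$. Its stationary law is the uniform law, which by the proof of Lemma~\ref{lem:tr-good-stationary} is the pushforward of $\pi$. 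The event $\{|S_\xi(X_t)|>n/4\}$ depends only on $\Xi(X_t)$.

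\textbf{Step 2 (boosting the one-column bound).} Let $d(t)$ and $\bar d(t)$ denote the worst-case and the pairwise total-variation distances for the one-column walk. Lemma~\ref{lem:tr-one-column-input} with a fixed $B$ (say $B=2$) gives $\bar d(t_0)\le 2n^{-B}$ at $t_0=c_Bn\log n$. By the standard submultiplicativity $\bar d(s+t)\le\bar d(s)\bar d(t)$ and the monotonicity of $d$, for every $t\ge mt_0$ we get
\[
d(t)\le \bar d(t_0)^m\le (2n^{-B})^m .
\]
Take $m=k+M$ and $C_M=(M+1)c_B$. Then $t_*=C_Mnk\log n\ge mt_0$, because $k\ge1$. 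This gives, uniformly in the start $x$ and in $t\ge t_*$,
\[
\Prob_x(|S_\xi(X_t)|>n/4)\le (2n^{-B})^{k+M}+\pi(|S_\xi|>n/4)\le (2n^{-B})^{k+M}+Ce^{-cn},
\]
where the second term is the Hoeffding bound from the proof of Lemma~\ref{lem:tr-good-stationary}.

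\textbf{Step 3 (union bounds).} Summing over the $2^k-1$ nonzero functionals gives
\[
\Prob_x(X_t\notin\Good_{\mathrm{tr}})\le 2^k(2n^{-B})^{k+M}+C2^ke^{-cn}.
\]
Here we use that $X_t\in\Stief{n}{k}$ automatically. The first term is at most $(4n^{-B})^k(2n^{-B})^M\le n^{-M}/2$ for large $n$. The second term is at most $e^{-c_0n}\le n^{-M}/2$ because $k\le\varepsilon_0n$, exactly as in Lemma~\ref{lem:tr-good-stationary}. This proves \eqref{eq:tr-burnin-one-time}. For \eqref{eq:tr-sojourn}, apply \eqref{eq:tr-burnin-one-time} with $M=10$ at each of the $L+1\le n^3+1$ times $s+u\ge t_*$ and take a union bound, which gives $O(n^{-7})=o(1)$.

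The main obstacle is Step 2. A naive application of Lemma~\ref{lem:tr-one-column-input} with $B$ fixed loses to the $2^k$ factor once $k\gg\log n$. The submultiplicativity trick must be set up carefully so that it gives decay like $n^{-\Omega(k)}$ uniformly over all starting states and all $t\ge t_*$, and that is where the choice $t_*\asymp nk\log n$ enters.
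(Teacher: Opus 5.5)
Your proof is correct and follows the paper's route. You project onto each nonzero functional to get a copy of the one-column walk, boost Lemma~\ref{lem:tr-one-column-input} via submultiplicativity of $\bar d$ to accuracy $n^{-\Omega(k)}$, and then union bound over the $2^k-1$ functionals together with the Hoeffding stationary bound. The only difference is bookkeeping: you fix $B=2$ and iterate $k+M$ blocks, where the paper takes $B=M+2$ and iterates $k$ blocks.
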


\begin{proof}
For each nonzero \(\xi\in V^*\), the projected process
\[
    \Pi_\xi(X_t)=(\xi(X_t[1]),\ldots,\xi(X_t[n]))
\]
is a nonzero vector, since \(X_t\in\Stief{n}{k}\) and the rows span \(V\).  If
row \(a\) is added to row \(b\), then
\[
    \xi(X_{t+1}[b])=\xi(X_t[b])+\xi(X_t[a]),
\]
so \(\Pi_\xi(X_t)\) is exactly the one-column transvection walk on
\(\Ft^n\setminus\{0\}\).

Let \(P_1\) be the one-column kernel and let \(\pi_1\) be its stationary distribution.  Set
\[
    d(t)=\sup_x\|P_1^t(x,\cdot)-\pi_1\|_{\TV},
    \qquad
    \bar d(t)=\sup_{x,y}\|P_1^t(x,\cdot)-P_1^t(y,\cdot)\|_{\TV}.
\] 
By Lemma~\ref{lem:tr-one-column-input}, for every fixed \(B<\infty\) there is
\(c_B<\infty\) such that \(d(c_Bn\log n)\le n^{-B}\).  Since
\(\bar d(s+t)\le\bar d(s)\bar d(t)\) and \(\bar d(t)\le2d(t)\), after
\(\ell c_Bn\log n\) steps the distance is at most \((2n^{-B})^\ell\).  Taking
\(B=M+2\) and \(\ell=k\), and then choosing \(C_M\) large enough in
\eqref{eq:tr-tstar}, ensures that, for every fixed \(\xi\ne0\), the law of
\(\Pi_\xi(X_t)\) is within \(2^{-k}n^{-M-1}\) of stationarity for all integer
times \(t\ge t_*\) and all sufficiently large \(n\).
Therefore, using the stationary bound from the proof of Lemma~\ref{lem:tr-good-stationary},
\[
    \Prob_x(|S_\xi(X_t)|>n/4)
    \le Ce^{-cn}+2^{-k}n^{-M-1}.
\]
Taking a union bound over \(\xi\ne0\) gives
\[
    \Prob_x(X_t\notin\Good_{\mathrm{tr}})
    \le C2^k e^{-cn}+n^{-M-1}
    \le n^{-M}
\]
for all large \(n\).  This proves \eqref{eq:tr-burnin-one-time}.  Finally,
\eqref{eq:tr-sojourn} follows by applying \eqref{eq:tr-burnin-one-time} at all
times \(s,s+1,\ldots,s+L\) and using \(L\le n^3\) with \(M=10\).
\end{proof}

\subsection{Spectral gap of the one-row convolution kernel}\label{subsec:transvection-fibre-gap}

\begin{proposition}[Transvection fibre spectral gap]\label{prop:tr-fibre-gap}
For all sufficiently large \(n\), the following holds.  Fix a recipient
coordinate \(i\) and the non-recipient rows \((z_j)_{j\ne i}\).  If the corresponding
\(i\)-fibre intersects \(\Good_{\mathrm{tr}}\), then the fibre kernel
\eqref{eq:tr-fibre-kernel} on \(V=\Ft^k\) has spectral gap at least \(1/2\).
\end{proposition}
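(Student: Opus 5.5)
The plan is to diagonalize the fibre kernel \eqref{eq:tr-fibre-kernel} explicitly with characters and then transfer the good-set condition to its eigenvalues. For \(\xi\in V^*\) we have \(\chi_\xi(u+z_j)=\chi_\xi(u)\chi_\xi(z_j)\). Hence
\[
    K_i\chi_\xi=\lambda_\xi\chi_\xi,
    \qquad
    \lambda_\xi=\frac1{n-1}\sum_{j\ne i}\chi_\xi(z_j).
\]
The characters \(\{\chi_\xi\}_{\xi\in V^*}\) form an orthonormal basis of \(L^2(\Unif(V))\), and \(K_i\) is symmetric (it is a convolution by a symmetric measure, since \(-z_j=z_j\) over \(\Ft\)). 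So \(K_i\) is self-adjoint with spectrum \(\{\lambda_\xi\}\). The eigenvalue \(\lambda_0=1\) belongs to constants, and the spectral gap equals
\[
    1-\max_{\xi\ne0}\lambda_\xi .
\]
Equivalently, for mean-zero \(f=\sum_{\xi\ne0}\hat f(\xi)\chi_\xi\), we have \(\cE_{K_i}(f,f)=\sum_{\xi\ne0}(1-\lambda_\xi)\hat f(\xi)^2\), which gives the Poincaré constant directly.

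It remains to bound \(\lambda_\xi\) for \(\xi\ne0\) using goodness of the fibre. Suppose some \(u\in V\) has \(z^{i,u}\in\Good_{\mathrm{tr}}\). Then for every \(\xi\ne0\),
\[
    \Bigl|\chi_\xi(u)+\sum_{j\ne i}\chi_\xi(z_j)\Bigr|=|S_\xi(z^{i,u})|\le\frac n4 .
\]
Since \(|\chi_\xi(u)|=1\), this gives \(\bigl|\sum_{j\ne i}\chi_\xi(z_j)\bigr|\le \frac n4+1\), and therefore
\[
    |\lambda_\xi|\le\frac{n/4+1}{n-1}\le\frac12
\]
once \(n\ge 6\) or so. Note that the good-set condition bounds all nontrivial eigenvalues in absolute value. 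This is more than needed, since only \(\lambda_\xi\le 1/2\) is required for the gap. The gap is then at least \(1/2\) for all sufficiently large \(n\).

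There is no serious obstacle. The one point needing care is that goodness refers to the modified configuration \(z^{i,u}\) rather than to the frozen rows, and that costs only the \(\pm1\) perturbation handled above. It is also worth checking that the "spectral gap" used in Proposition~\ref{prop:abstract-fibre-gap-lsi} is \(1-\lambda_2\) for the Dirichlet form with respect to uniform measure on \(V\). That holds here because \(\cE_{K_i}\) is diagonal in the character basis, so no absolute-spectral-gap issue arises.
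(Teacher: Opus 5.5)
Your proposal is correct and follows essentially the same argument as the paper. You diagonalize $K_i$ by the characters $\chi_\xi$, pick a point of the fibre in $\Good_{\mathrm{tr}}$, absorb the recipient row's contribution as a $\pm1$ perturbation of $S_\xi$, and conclude $|\lambda_\xi|\le (n/4+1)/(n-1)\le 1/2$ for all nontrivial $\xi$ once $n$ is large.
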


\begin{proof}
Choose a point in the fibre,
\[
    z=(z_1,\ldots,z_{i-1},u_0,z_{i+1},\ldots,z_n)\in\Good_{\mathrm{tr}}.
\]
The kernel \(K_i\) is reversible with respect to uniform measure on \(V\), since
\(V\) has exponent two.  Its eigenfunctions are the characters \(\chi_\xi\),
with eigenvalues
\[
    \widehat\nu_i(\xi)=\frac1{n-1}\sum_{j\ne i}\chi_\xi(z_j).
\]
For \(\xi\ne0\), since \(z\in\Good_{\mathrm{tr}}\),
\[
    \left|\sum_{j\ne i}\chi_\xi(z_j)\right|
    =|S_\xi(z)-\chi_\xi(u_0)|
    \le \frac n4+1.
\]
Thus, for all sufficiently large \(n\),
\[
    |\widehat\nu_i(\xi)|\le\frac{n/4+1}{n-1}\le\frac12,
    \qquad \xi\ne0.
\]
Every nonconstant eigenvalue is therefore at most \(1/2\) in absolute value,
so the spectral gap is at least \(1/2\).
\end{proof}

\subsection{Application of \texorpdfstring{Theorem~\ref*{thm:abstract-good-set}}{Theorem 2.7}}\label{subsec:transvection-apply}

We prove our main result for the transvection walk:

\begin{theorem}\label{thm:tr-linear-lazy}
There exist absolute constants \(\varepsilon_0>0\) and \(C_{\mathrm{lin}}<\infty\)
such that, for all sufficiently large \(n\) and all \(1\le k\le\varepsilon_0n\),
the half-lazy kernel
\[
    Q_{n,k}=\frac12(I+P_{n,k})
\]
satisfies
\[
    \tau_{\mathrm{mix}}(Q_{n,k})\le C_{\mathrm{lin}}nk\log n.
\]
\end{theorem}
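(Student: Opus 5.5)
The plan is to apply Theorem~\ref{thm:abstract-good-set} with $S=V=\Ft^k$, $N=n$, $\Omega=\Stief{n}{k}$ and $G=\Good_{\mathrm{tr}}$, and then to pass from continuous time to the half-lazy discrete chain. First, $P_{n,k}$ is a coordinate-replacement kernel in the sense of \eqref{eq:abstract-fibre-decomposition}. Grouping the pairs $(a,b)$ by the recipient $b$ gives
\[
Pf(z)=\frac1n\sum_{b}\frac{1}{n-1}\sum_{a\ne b}f(z^{b,z_b+z_a}),
\]
and the inner kernel is exactly $K_b$ from \eqref{eq:tr-fibre-kernel}, which is reversible for uniform measure on $V$. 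For hypothesis (i), Proposition~\ref{prop:tr-fibre-gap} gives spectral gap $\gamma\ge 1/2$ on every fibre meeting $\Good_{\mathrm{tr}}$. Proposition~\ref{prop:abstract-fibre-gap-lsi} then yields
\[
A=C\,n\log|V|/\gamma\le C'nk .
\]

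Next I fix the remaining parameters. Since $\log|\Omega|\le nk\log 2$, we get
\[
t_{\mathrm{conf}}=2A\log(e+\log|\Omega|)=O(nk\log n).
\]
For the error term $R$ in \eqref{eq:abstract-error-defs}, the first part is $e^{-t_{\mathrm{conf}}/A}\log|\Omega|\le(\log|\Omega|)^{-1}\to0$. The second part is $A\pi(G^c)/\pi(G)\le C'nk\,e^{-c_0n}\cdot 2\to0$, by Lemma~\ref{lem:tr-good-stationary}, using $k\le\varepsilon_0 n$. That lemma also makes $\pi(G^c)$ negligible.

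I take $L=\lceil 2t_{\mathrm{conf}}\rceil$. This is at most $n^3$ for large $n$, because $nk\log n\le n^2\log n$. Then $\zeta=\Prob(\operatorname{Poi}(t_{\mathrm{conf}})>2t_{\mathrm{conf}})$ is exponentially small. For hypothesis (ii), Lemma~\ref{lem:tr-burnin} with $M=10$ and $t_*=C_{10}nk\log n$ gives $\eta=o(1)$. Finally, $\Prob(\operatorname{Poi}(2t_*)<t_*)$ is exponentially small in $t_*$. So \eqref{eq:abstract-continuous-condition} holds for large $n$, and the continuous-time mixing time is at most $2t_*+t_{\mathrm{conf}}=O(nk\log n)$.

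The remaining step, which I expect to be the main (if routine) obstacle, is the transfer to $Q_{n,k}$. The argument of Theorem~\ref{thm:abstract-good-set} uses the continuous-time structure in only two places: Poissonization of the number of jumps, and the entropy decay of Lemma~\ref{lem:abstract-killed-entropy}.

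My first choice is to rerun the argument directly for $Q$. We write $Q^m=\E[P^{J}]$ with $J\sim\mathrm{Bin}(m,1/2)$, and split $m=m_1+m_2$. For the burn-in phase, $\mathrm{Bin}(m_1,1/2)\ge t_*$ with overwhelming probability when $m_1=4t_*$. For the confinement phase, I would compare $Q^{m_2}$ with the killed lazy kernel $\frac12(I+K_G)$. The path coupling of Lemma~\ref{lem:abstract-path-comparison} works verbatim, with $\mathrm{Bin}(m_2,1/2)$ in place of $\operatorname{Poi}(t)$. Entropy decay needs a discrete analogue of Lemma~\ref{lem:abstract-killed-entropy}. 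I would derive one of the form
\[
H(u_{m+1})\le(1-c/A)H(u_m)+C\delta\,\rho(u_0)
\]
for the lazy substochastic kernel. The standard route is to bound the one-step entropy loss of a lazy reversible kernel below by a constant times $\cE(\sqrt u,\sqrt u)$, using the factor $1/2$ of laziness. The killing term is then handled exactly as in the proof above, and the result follows with $m_2=O(A\log(nk))$.

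If that discrete entropy estimate proves awkward, the fallback is to use the continuous-time bound directly. One writes $e^{t(P-I)}=e^{2t(Q-I)}$, and quotes the standard comparison between the lazy discrete chain and its continuous-time version, which for reversible chains bounds the lazy discrete mixing time by a constant multiple of the continuous-time mixing time up to lower-order terms (cf.\ \cite{levin2017markov}). This gives $\tau_{\mathrm{mix}}(Q_{n,k})\le C_{\mathrm{lin}}nk\log n$.
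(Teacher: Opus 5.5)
Your proposal is correct and follows the paper's proof essentially step for step. Both instantiate Theorem~\ref{thm:abstract-good-set} with $A=O(nk)$ from Propositions~\ref{prop:tr-fibre-gap} and~\ref{prop:abstract-fibre-gap-lsi}, use Lemma~\ref{lem:tr-good-stationary} and Lemma~\ref{lem:tr-burnin} with $M=10$, and finish with the standard continuous-time/lazy-chain comparison. Choosing $L=\lceil 2t_{\mathrm{conf}}\rceil$ instead of $n^3$ and checking $R$ more explicitly are cosmetic changes, and your sketched discrete-time entropy route is unnecessary, since your fallback is exactly what the paper does.
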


\begin{proof}
We check that we can satisfy the assumptions of Theorem~\ref{thm:abstract-good-set}, with the choices
\[
    S=\Ft^k,
    \qquad N=n,
    \qquad \Omega=\Stief{n}{k},
    \qquad G=\Good_{\mathrm{tr}}.
\]
By Propositions~\ref{prop:tr-fibre-gap} and \ref{prop:abstract-fibre-gap-lsi}, and using the fact that \(\log|S|=k\log2\), there exists a universal constant $0 < C< \infty$ such that every nonnegative
\(F:V^n\to\mathbb R\) supported on \(G\) satisfies
\begin{equation} \label{IneqLocalLogSob}
    \Ent_\mu(F^2)\le C nk\,\cE_P^\mu(F,F).
\end{equation}
Thus we may take \(A=Cnk\) in the statement of Theorem~\ref{thm:abstract-good-set}.

Lemma~\ref{lem:tr-good-stationary} gives
\[
    \pi(G^c)=o(1),
    \qquad
    A\frac{\pi(G^c)}{\pi(G)}
    \le Cnk e^{-c_0n}=o(1).
\]
Take \(L=n^3\).  With \(M=10\), Lemma~\ref{lem:tr-burnin} gives a burn-in
\(t_*=O(nk\log n)\) and the sojourn estimate \eqref{eq:tr-sojourn}; in the
notation of Theorem~\ref{thm:abstract-good-set}, this means \(\eta=o(1)\).
The confinement time in \eqref{eq:abstract-tconf} satisfies
\[
    t_{\mathrm{conf}}
    \le Cnk\log(e+\log|\Stief{n}{k}|)
    \le Cnk\log n,
\]
since \(k\le n\).  In particular \(t_{\mathrm{conf}}=O(n^2\log n)=o(n^3)\), so
\(\zeta=\Prob(\operatorname{Poi}(t_{\mathrm{conf}})>n^3)=o(1)\).  Therefore the
right-hand side of \eqref{eq:abstract-after-burnin} is \(o(1)\).  Since
\(t_*\to\infty\), the Poisson lower tail in
\eqref{eq:abstract-continuous-condition} is also \(o(1)\).  Theorem~\ref{thm:abstract-good-set}
therefore gives mixing of the continuous-time chain in time \(O(nk\log n)\), and the
standard continuous-time/lazy-chain comparison gives the same order for the half-lazy chain.
\end{proof}

\begin{proof}[Proof of Theorem~\ref{thm:intro-transvection-main}]
Let \(\varepsilon_0\) be the constant from Theorem~\ref{thm:tr-linear-lazy}.  If
\(k\le\varepsilon_0 n\), the result is exactly Theorem~\ref{thm:tr-linear-lazy}.

If \(k>\varepsilon_0 n\), we note that the $k$-column walk is a projection of the full $n$-column walk. Since projecting cannot increase mixing times, the mixing time of the $k$-column walk is bounded from above by the \(O(n^2\log n)\) upper bound from \cite[Theorem~1]{BenHamou2025}.  Passing to the half-lazy version changes the bound by at most a universal constant. Thus
\[
    \tau_{\mathrm{mix}}(Q_{n,k})\le C_{\mathrm{full}}n^2\log n
    \le \frac{C_{\mathrm{full}}}{\varepsilon_0}kn\log n.
\]
Enlarging the constant proves the theorem.
\end{proof}

For completeness, we give the lower bounds as well, though both are essentially well-known from \cite{BenHamou2025,BenHamouPeres2018,PeresTanakaZhai2020}:

\begin{proof}[Proof of Theorem~\ref{thm:intro-transvection-lower}]
We prove the two lower bounds separately.

To prove the first, consider the map $f$ from $\Stief{n}{k}$ to $\Stief{n}{1}$ given by simply taking the first column. If $\{Z_{t}\}$ is a copy of the $k$-column walk, then $\{f(Z_{t})\}$ is exactly a copy of the $1$-column walk. This immediately implies that the mixing time of $\{Z_{t}\}$ is greater than or equal to the mixing time of $\{f(Z_{t})\}$. The latter is $\Omega(n \log(n))$ by \cite{BenHamouPeres2018}, proving the first lower bound.

To prove the second, we use a counting bound.  The state space has
\[
    |\Stief{n}{k}|=\prod_{q=0}^{k-1}(2^n-2^q)
    =2^{nk}\prod_{q=0}^{k-1}(1-2^{q-n}).
\]
Since \(k\le n\),
\[
    \prod_{q=0}^{k-1}(1-2^{q-n})
    \ge \prod_{j=1}^{\infty}(1-2^{-j})=:c_0>0,
\]
and hence
\begin{equation}\label{eq:tr-state-space-lower}
    |\Stief{n}{k}|\ge c_0 2^{nk}.
\end{equation}
From any fixed state, one step of the half-lazy chain has at most
\[
    M_n=1+n(n-1)
\]
possible outcomes.  Therefore, after \(t\) steps from a fixed starting point,
the law is supported on a set \(A_t\subseteq\Stief{n}{k}\) with
\(|A_t|\le M_n^t\).  Since \(\pi\) is uniform,
\[
    \pi(A_t)\le \frac{M_n^t}{|\Stief{n}{k}|}.
\]
If
\[
    t\le c'\frac{nk}{\log(e n)}
\]
with \(c'>0\) sufficiently small, then \eqref{eq:tr-state-space-lower} and
\(\log M_n=O(\log(e n))\) imply \(\pi(A_t)\le1/2\) for all sufficiently large
\(n\).  Since the chain is supported on \(A_t\) at time \(t\),
\[
    \|\delta_xQ_{n,k}^t-\pi\|_{\mathrm{TV}}
    \ge 1-\pi(A_t)\ge \frac12.
\]
This proves
\(\tau_{\mathrm{mix}}\ge c'nk/\log(e n)\).  Combining the two bounds and
decreasing the constant proves the theorem.
\end{proof}

\section{Power-averaged product replacement algorithm on Heisenberg groups}\label{sec:heis}

We describe a small modification of the usual product-replacement algorithm (PRA; see \cite{CellerLeedhamGreenMurrayNiemeyerOBrien1995,Pak2001}) on the Heisenberg group, which we call the \textit{power-averaged} PRA (PA-PRA).

\begin{remark} [Why Change the PRA?]
The PA-PRA algorithm defined in Equation \eqref{eq:heis-kernel-P} is not more challenging to implement or run than the usual PRA algorithm. We also expect it to be slightly more efficient; it is similar in spirit to lifted or momentum-based Markov chains such as \cite{DiaconisHolmesNeal2000}. In these senses, the PA-PRA is a natural generalization of the usual PRA.

However, we introduced this walk primarily for technical reasons. Taking random powers substantially simplifies several steps in the proof, for essentially the same reason that Fourier analysis of random walks on groups is much easier for conjugacy-invariant walks. See \textit{e.g.} \cite{Diaconis1988,DiaconisShahshahani1981} for explanations of technical problems in applying Fourier analysis to random walks on groups without these sorts of symmetries.
\end{remark}

\subsection{Notation for Heisenberg Group}

We give basic notation and facts about the Heisenberg group. None of the group-theoretic results in this paper are new, and we closely follow the notation and development of \cite{Misaghian2010}.

Throughout Section \ref{sec:heis}, fix an odd prime \(p\).  Let \(W\) be a vector space
of dimension \(h=2m\) over \(\Fp\), equipped with a nondegenerate alternating
form
\[
    \omega:W\times W\to\Fp.
\]

We denote the set associated with the Heisenberg group by
\[
    \Heis_{m}=\Heis(W)=W\times\Fp.
\]
We often omit the subscript $m$ when the discussion applies to all $m$. For an element $(v,z)\in\Heis$, we call $v$ the \textit{horizontal} coordinate and $z$ the \textit{central} coordinate. For an element \(g=(g_1,\ldots,g_r)\in\Heis^r\), we often write
\(g_i=(v_i,z_i)\), with \(v_i\in W\) and \(z_i\in\mathbb F_p\).

Multiplication in $\Heis$ is given by
\begin{equation}\label{eq:heis-law}
    (v,z)(w,t)=\left(v+w,\,z+t+\frac12\omega(v,w)\right).
\end{equation}
The identity is \((0,0)\), the inverse of \((v,z)\) is \((-v,-z)\), and the commutator is
\[
    [(v,z),(w,t)]=(0,\omega(v,w)).
\]

We collect some basic facts about $\Heis$ and related objects. Since \(\omega(v,v)=0\),
\begin{equation}\label{eq:heis-powers}
    (v,z)^a=(av,az),\qquad a\in\Fp.
\end{equation}
Thus \(\Heis\) has exponent \(p\). We claim that the center of \(H\) is
\[
    Z(H)=\{0\}\times \Fp.
\]
To see this, not that if \(v\ne0\), then by nondegeneracy of \(\omega\) there is
\(w\in W\) with \(\omega(v,w)\ne0\), so \((v,z)\) does not commute with
\((w,0)\). The quotient map
\[
    H\to H/Z(H)\simeq W,\qquad (v,z)\mapsto v,
\]
keeps only the horizontal coordinate. Also, since
\[
    [(v,z),(w,t)]=(0,\omega(v,w))
\]
and \(\omega\) is nondegenerate, the commutators generate all of
\(\{0\}\times\Fp\). Thus the abelianization is naturally identified with
\(W\).

For \(g_1,\ldots,g_r\in\Heis\), denote by \(\langle g_1,\ldots,g_r\rangle\) the subgroup of \(\Heis\) that they generate. Deliberately mimicking the notation for the transvection walk, set
\[
    \Omega=V_r(\Heis)=\{g=(g_1,\ldots,g_r)\in\Heis^r:
    \langle g_1,\ldots,g_r\rangle=\Heis\}.
\] 
We have:
\begin{lemma}\label{lem:heis-horizontal-generation}
 A tuple \(((v_1,z_1),\ldots,(v_r,z_r))\) generates \(\Heis\)
if and only if
\begin{equation}\label{eq:heis-generation-horizontal}
    \Span(v_1,\ldots,v_r)=W.
\end{equation}
\end{lemma}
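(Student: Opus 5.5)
We prove the two implications separately. Throughout, write \(\pi_W:\Heis\to W\), \((v,z)\mapsto v\), for the quotient map by the center. By the discussion above, \(\pi_W\) is a surjective homomorphism, its kernel is \(Z(\Heis)=\{0\}\times\Fp\), and this kernel is also the commutator subgroup.

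\emph{Necessity.} Suppose \(\langle g_1,\ldots,g_r\rangle=\Heis\). Since \(\pi_W\) is a surjective homomorphism, \(\pi_W\) maps \(\langle g_1,\ldots,g_r\rangle\) onto the subgroup of \(W\) generated by \(v_1,\ldots,v_r\).
\begin{itemize}
\item Because \(W\) is an elementary abelian \(p\)-group, this subgroup is the \(\Fp\)-span of the \(v_i\); every integer multiple is an \(\Fp\)-multiple.
\item Hence \(W=\pi_W(\Heis)=\Span(v_1,\ldots,v_r)\), which is \eqref{eq:heis-generation-horizontal}.
\end{itemize}

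\emph{Sufficiency.} Assume \(\Span(v_1,\ldots,v_r)=W\) and let \(H'=\langle g_1,\ldots,g_r\rangle\).
\begin{enumerate}
\item \(H'\) surjects onto \(W\). By \eqref{eq:heis-powers} we have \(g_i^a=(av_i,az_i)\in H'\), and products of such elements have horizontal part equal to the sum of the horizontal parts. So \(\pi_W(H')\) is a subgroup containing every \(\Fp\)-linear combination of the \(v_i\), hence all of \(W\).
\item \(H'\) contains a nontrivial central element. By nondegeneracy of \(\omega\), and because the \(v_i\) span \(W\), some pair satisfies \(\omega(v_i,v_j)\ne0\). Indeed, if \(\omega(v_i,v_j)=0\) for all \(i,j\), then bilinearity gives \(\omega(W,W)=0\), a contradiction since \(W\ne0\) as \(m\ge1\). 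The commutator \([g_i,g_j]=(0,\omega(v_i,v_j))\) then lies in \(H'\) and is a nonzero element of the center.
\item \(H'\) contains the whole center. The center \(\{0\}\times\Fp\) has prime order \(p\), so the nonzero element from step 2 generates it. Thus \(Z(\Heis)\subseteq H'\).
\item \(H'=\Heis\). Take any \((w,t)\in\Heis\). By step 1 there is \(h=(w,s)\in H'\). Then \((w,t)=h\cdot(0,t-s)\), because the central coordinate adds with no \(\omega\)-correction when one factor has zero horizontal part, by \eqref{eq:heis-law}. The factor \((0,t-s)\) lies in \(H'\) by step 3, so \((w,t)\in H'\).
\end{enumerate}

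The key step is step 2, which produces a nonzero commutator from a spanning set. It follows directly from nondegeneracy and bilinearity, so it presents no real obstacle. The remaining steps are routine group theory.
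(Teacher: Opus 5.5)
Your proof is correct and follows essentially the paper's argument: necessity via the quotient map $(v,z)\mapsto v$, and sufficiency by showing the generated subgroup surjects onto $W$ and contains a nontrivial commutator, hence the whole center $\{0\}\times\mathbb F_p$. The only difference is cosmetic. You find a non-commuting pair among the generators and then use that the center has prime order, whereas the paper lifts a pair $v,w$ with $\omega(v,w)=1$ through the surjection and obtains $(0,1)$ directly.
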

\begin{proof}
The condition \eqref{eq:heis-generation-horizontal} is necessary because the
quotient map \(\Heis\to \Heis/(\{0\}\times\Fp)\cong W\) is given by \((v,z)\mapsto v\).

Conversely, assume that the horizontal vectors \(v_i\) span \(W\).
The subgroup \(H_0=\langle g_1,\ldots,g_r\rangle\), where
\(g_i=(v_i,z_i)\), maps onto \(W\) under the quotient map
\(\Heis\to W\). Since \(m\ge1\) and \(\omega\) is nondegenerate, there
exist \(v,w\in W\) with \(\omega(v,w)=1\).  By surjectivity, choose
\(x,y\in H_0\) with horizontal coordinates \(v\) and \(w\), respectively.
Writing \(x=(v,z)\) and \(y=(w,t)\), the commutator formula gives
\[
    [x,y]=(0,\omega(v,w))=(0,1).
\]
Thus \(H_0\) contains \(\{0\}\times\mathbb F_p\).  Since \(H_0\) also maps
onto \(W\), it follows that \(H_0=\Heis\).
\end{proof}

A \emph{symplectic basis} associated with the alternating form $\omega$ is a basis \(b_1,\ldots,b_h\) such that
\(\omega(b_{2q-1},b_{2q})=1\),
\(\omega(b_{2q},b_{2q-1})=-1\), and all other pairings vanish.  Such a basis
exists for every nondegenerate alternating form.  


\subsection{Notation for PA-PRA on the Heisenberg group}\label{subsec:heis-chain}

Let \(\pi\) be uniform on \(\Omega\), and let \(\mu\) be uniform on
\(\Heis^r\).  By \eqref{eq:heis-generation-horizontal}, conditioning \(\mu\) on
\(\Omega\) gives \(\pi\).

For distinct \(i,j\in[r]\) and \(a\in\Fp\), define the right and left maps $T^R_{j\to i,a},T^L_{j\to i,a}$ by the formulas:
\[
    T^R_{j\to i,a}g
    =(g_1,\ldots,g_{i-1},g_i g_j^a,g_{i+1},\ldots,g_r),
\]
and
\[
    T^L_{j\to i,a}g
    =(g_1,\ldots,g_{i-1},g_j^a g_i,g_{i+1},\ldots,g_r).
\]
In both cases, we call the index \(j\) the \textit{donor} and the index \(i\) the \textit{recipient}.

We claim that, for any $i,j,a$, the subgroup generated by the collections $T^R_{j\to i,a}g$ and $T^L_{j\to i,a}g$ is the same as the subgroup generated by the original collection $g$. To see this, note that the changed element \(g_i\) is a simple function of  the new
coordinate $g_j^a g_i$ or $g_{i} g_j^a$ and the unchanged donor $g_{j}$, and hence can be recovered when generating the subgroup. In particular, the maps both preserve \(\Omega\).

We define the non-lazy kernel for PA-PRA by:
\begin{equation}\label{eq:heis-kernel-P}
    Pf(g)=\frac1{2pr(r-1)}
    \sum_{i\ne j}\sum_{a\in\Fp}
    \left[f(T^R_{j\to i,a}g)+f(T^L_{j\to i,a}g)\right],
\end{equation}
and $Q = \frac{1}{2}(I + P)$ the usual $\frac{1}{2}$-lazy version.

Equation \eqref{eq:heis-kernel-P} also defines a kernel on all of \(\Heis^r\), not just $\Omega$.  Each move is a
bijection and its inverse is the corresponding move with exponent \(-a\) and
the same side.  Hence \(P\) is reversible with respect to \(\mu\) on
\(\Heis^r\) and with respect to \(\pi\) on \(\Omega\). 

We note a close connection between this walk and the transvection walk. If \(g_i=(v_i,z_i)\) and \(g_j=(v_j,z_j)\), then both left and right moves send
\begin{equation}\label{eq:heis-horizontal-update}
    v_i\leftarrow v_i+a v_j.
\end{equation}
Thus the horizontal projection is exactly the \(p\)-ary transvection walk on spanning
\(r\)-tuples in \(W\).

The main result of this section is the following upper bound. In reading this, we think of $r$ as roughly corresponding to $n$ in the transvection walk and $h$ as roughly corresponding to $k$; the requirement $h \leq \varepsilon r$ comes from the same calculation as the requirement that $k \leq \varepsilon n$ in the transvection walk.

\begin{theorem}\label{thm:heis-main}
Fix an odd prime \(p\) and \(\varepsilon\in(0,1)\).  There is a constant
\(C_{p,\varepsilon}<\infty\) such that the following holds.  Let
\(h=2m\ge2\) and suppose \(h\le\varepsilon r\).  For all sufficiently large
\(r\), the half-lazy PA-PRA walk on
\(V_r(\Heis)\) satisfies
\[
    \tau_{\mathrm{mix}}\le C_{p,\varepsilon} r(h+\log r)\log r.
\]
In particular, if \(h\ge\log r\), then
\[
    \tau_{\mathrm{mix}}\le C_{p,\varepsilon} rh\log r.
\]
\end{theorem}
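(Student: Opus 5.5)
We will verify the hypotheses of Theorem~\ref{thm:abstract-good-set} with $S=\Heis$, $N=r$, $\Omega=V_r(\Heis)$, and a good set $G_{\mathrm{H}}$ defined through Fourier coefficients, mirroring the transvection argument. Conditioning on the recipient $i$ and the other coordinates gives the fibre kernel
\[
K_i\varphi(g)=\frac{1}{2p(r-1)}\sum_{j\ne i}\sum_{a\in\Fp}\bigl[\varphi(g g_j^a)+\varphi(g_j^a g)\bigr],
\]
which is the random walk on $\Heis$ driven by the symmetric measure $\nu_i=\frac1{r-1}\sum_{j\ne i}\frac1p\sum_a\delta_{g_j^a}$, applied half on the left and half on the right. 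It is reversible with respect to uniform measure, since $g_j^{-a}=g_j^{p-a}$. I expect the earlier-mentioned Proposition~\ref{prop:heis-reps} to list the irreducible representations: $p^{h}$ characters $\chi_\xi(v,z)=e^{2\pi i\xi(v)/p}$, $\xi\in W^*$, and for each $c\in\Fp^\times$ one Schr\"odinger representation $\rho_c$ of dimension $p^m$ with central character $z\mapsto e^{2\pi i cz/p}$. The operator $K_i$ acts on the matrix coefficients of $\rho$ through $\tfrac12(\widehat\nu_i(\rho)\otimes I+I\otimes\widehat\nu_i(\rho)^{T})$, so the gap is governed by $\max_{\rho\ne1}\|\widehat\nu_i(\rho)\|_{op}$.

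The fibre-gap step, the analogue of Proposition~\ref{prop:tr-fibre-gap}, splits into the two classes of representations. For the characters, the power average gives $\frac1p\sum_a\chi_\xi(av_j)=\one\{\xi(v_j)=0\}$. The eigenvalue is therefore $\frac1{r-1}\#\{j\ne i:\xi(v_j)=0\}$, which is about $1/p$ for balanced tuples. The good set will require $\#\{j:\xi(v_j)=0\}\le r\,(1/p+\tfrac18)$ for all $\xi\ne0$, which yields an eigenvalue at most $1/p+1/4\le 7/12$.

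For $\rho_c$, the power average is the key simplification: $\frac1p\sum_a\rho_c(g_j)^a$ is the projection $\Pi_j$ onto the fixed space of the order-$p$ unitary $\rho_c(g_j)$. When $v_j\ne0$, $\rho_c(v_j,z_j)$ is a Weyl operator whose eigenspaces all have dimension $p^{m-1}$, so $\Pi_j$ is a rank-$p^{m-1}$ projection. We then need $\|\frac1{r-1}\sum_{j\ne i}\Pi_j\|\le1-\gamma$. Writing $\Pi_j=\frac1p\sum_a\rho_c(av_j,az_j)$ and using the fact that $\operatorname{Tr}\rho_c(w,\cdot)=0$ for $w\ne0$, I will bound $\|A\|$ for $A=\frac1{r}\sum_j\Pi_j$ by a moment/trace method: $\|A\|^{2q}\le \operatorname{Tr}(A^{2q})$. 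Alternatively, and more simply, a vector $x$ with $\langle Ax,x\rangle\ge1-\gamma$ must be nearly fixed by most $\rho_c(v_j,z_j)$. Since these operators are unitary, $x$ would then be nearly fixed by products, hence by commutators $(0,\omega(v_j,v_l))$, which act as nontrivial scalars $e^{2\pi ic\omega/p}$ whenever $\omega(v_j,v_l)\ne0$. So we add to the good set the requirement that a positive fraction of pairs among any large subset of the $v_j$ have $\omega(v_j,v_l)\ne0$. This holds if the $v_j$ are Fourier-balanced, since for any fixed $v$ the condition $\omega(v,\cdot)\ne0$ is a nonzero functional. The robust version of this argument, controlling near-fixedness quantitatively, is where I expect the main obstacle to lie; it should produce a gap $\gamma=\gamma_p>0$. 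Proposition~\ref{prop:abstract-fibre-gap-lsi} with $\log|S|=(h+1)\log p$ then gives $A=C_p r(h+1)$.

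For the stationary size and burn-in, $\mu(G^c)\le p^{h}e^{-cr}$ by Hoeffding and a union bound over $\xi$, which is small when $h\le\varepsilon r$ with $\varepsilon$ small relative to $p$. Larger $\varepsilon<1$ is handled by adjusting the thresholds; I do not expect this to be essential. For the burn-in, each projection $\xi(v_j)$ is a one-column $p$-ary transvection walk. A $p$-ary version of Lemma~\ref{lem:tr-one-column-input}, from \cite{PeresTanakaZhai2020}, together with the submultiplicativity amplification of Lemma~\ref{lem:tr-burnin}, gives closeness $p^{-h}r^{-M}$ after $t_*=C r(h+\log r)\log r$ steps; the additive $\log r$ term comes from needing $r^{-M}$ even when $h$ is small. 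Finally, $t_{\mathrm{conf}}=2A\log(e+rh\log p)=O(rh\log r)$. Theorem~\ref{thm:abstract-good-set} with $L=r^3$ then gives the claimed bound, and the half-lazy comparison finishes the proof.
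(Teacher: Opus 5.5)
Your architecture matches the paper's. You use Theorem~\ref{thm:abstract-good-set} with $S=\Heis$ and $N=r$, a hyperplane-balance good set, and character eigenvalues $\mu_W(\ker\xi)$. You reduce each central-character block to $\|\frac1N\sum_j\Pi_j\|_{\mathrm{op}}$, where $\Pi_j$ is the fixed-space projection of $U_j=\rho_c(g_j)$, and then take $A=C_{p,\varepsilon}r(h+1)$ and $L=r^3$.

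\textbf{The central-character bound.} The real hole is that the bound on $\|\frac1N\sum_j\Pi_j\|_{\mathrm{op}}$, which is the heart of the Heisenberg case, is left as an expectation. It closes in a few lines along your own near-fixedness idea.
\begin{enumerate}
\item Suppose $\|x\|=1$ and $\frac1N\sum_j\|\Pi_jx\|^2\ge1-\gamma$. Then $\frac1N\sum_j\|(I-\Pi_j)x\|^2\le\gamma$.
\item Since $U_j\Pi_j=\Pi_j$, we have $\|U_jx-x\|\le2\|(I-\Pi_j)x\|$. By Markov, all but $\sqrt\gamma N$ indices satisfy $\|U_jx-x\|\le2\gamma^{1/4}$.
\item Apply hyperplane balance to $\{0\}$ and to $v_j^\perp$. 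Once $\sqrt\gamma<1-\beta$, this yields two such indices with $\omega(v_j,v_\ell)\ne0$.
\item For that pair, $2\sin(\pi/p)\le|\zeta-1|=\|U_jU_\ell x-U_\ell U_jx\|\le8\gamma^{1/4}$. This is impossible when $\gamma<\min\{(1-\beta)^2,(\sin(\pi/p)/4)^4\}$.
\end{enumerate}
This gives a gap independent of $m$ and $r$. It needs no trace method and no extra pair condition in the good set.

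The paper's Lemmas~\ref{lem:heis-two-projections} and~\ref{lem:heis-average-projections} do the same thing more cleanly. The relation $UV^b=\zeta^bV^bU$ forces $P_UV^bP_U=0$ for $b\ne0$. Hence $P_UP_VP_U=P_U/p$ and $\|P_UP_V\|_{\mathrm{op}}\le p^{-1/2}$. Averaging a two-projection eigenvalue bound over the $(1-\beta)^2$-fraction of noncommuting pairs then gives $\|A_\lambda\|_{\mathrm{op}}\le1-\frac{(1-\beta)^2}2(1-p^{-1/2})$.

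\textbf{Large $\varepsilon$.} The theorem covers every $\varepsilon<1$, and raising the threshold is not enough if you keep Hoeffding. Its rate $2(\beta-1/p)^2\le2(1-1/p)^2$ is below $\log p$ for every odd $p$. So the union bound over the $p^h-1$ functionals fails when $h$ is close to $\varepsilon r$ with $\varepsilon$ near $1$. You need the exact binomial rate $I_p(\beta)$, which increases to $\log p$ as $\beta\uparrow1$. This is why the paper fixes $\beta_0$ with $I_p(\beta_0)>\varepsilon\log p$. Doing so is harmless for the fibre gap, which only needs $\beta<1$.

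\textbf{Burn-in input.} As far as I can tell, the Peres--Tanaka--Zhai result is stated for product replacement with multipliers $\pm1$. Your projected walk is $Y_i\mapsto Y_i+aY_j$ with $a$ uniform on $\Fp$, so the $p$-ary input you invoke is not available as cited. Since $\Good_{\mathrm H}$ only sees the zero counts $N_\xi$, it suffices to control the support-size birth--death chain $S_t$. The paper does this directly in Lemmas~\ref{lem:heis-support-growth}--\ref{lem:heis-scalar-burnin}:
\begin{enumerate}
\item $\E_s\tau_{\alpha_1}=O(r\log r)$ uniformly in $s$.
\item Hence $\asymp h+M\log r$ blocks of length $O(r\log r)$ reach $\alpha_1r$, except with probability $p^{-h}r^{-M-1}$.
\item The chance of then dropping below $\alpha_0r$ within $r^3$ steps is at most $e^{-(J_p(\alpha_0,\alpha_1)+o(1))r}$.
\item Since $J_p(\alpha_0,\alpha_1)>\varepsilon\log p$, this survives the union bound.
\end{enumerate}
This gives the same $t_*=O(r\log r\,(h+\log r))$ without any total-variation mixing input. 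Your route also works if you first prove an $O(r\log r)$ mixing bound for this walk (or for $S_t$), then combine submultiplicativity with the sharp stationary tail.
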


At the end of the section, we give two lower bounds that are close to matching the upper bound when combined. 

\begin{theorem}[Heisenberg lower bounds]\label{thm:heis-lower}
Fix an odd prime \(p\).  There are constants \(c_p>0\) and \(r_0(p)<\infty\)
such that, for every \(h=2m\ge2\) and every \(r\ge\max\{h+1,r_0(p)\}\),
the half-lazy PA-PRA walk on \(V_r(\Heis)\) satisfies
\[
    \tau_{\mathrm{mix}}
    \ge c_p\max\left\{r\log r,\,\frac{r(h+1)}{\log(e r)}\right\}.
\]
\end{theorem}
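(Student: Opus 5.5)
The plan is to prove the two lower bounds separately, mirroring the proof of Theorem~\ref{thm:intro-transvection-lower}, and then take the maximum.

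\textbf{The $r\log r$ bound.} I would project to a single one-dimensional horizontal statistic. Fix a nonzero linear functional $\xi:W\to\Fp$ and consider the map $g\mapsto(\xi(v_1),\ldots,\xi(v_r))\in\Fp^r\setminus\{0\}$. By \eqref{eq:heis-horizontal-update}, every move (left or right, any $a$) sends $\xi(v_i)\mapsto\xi(v_i)+a\,\xi(v_j)$, with $a$ uniform on $\Fp$. So the image is itself a Markov chain: the one-column $p$-ary ``power-averaged'' transvection walk on nonzero vectors of $\Fp^r$. Since projections cannot decrease total variation distance, it suffices to show this chain needs $\gtrsim r\log r$ steps.

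I would argue this directly with a coupon-collector obstruction, rather than citing \cite{PeresTanakaZhai2020}, since the walk is not literally the one in that paper. Start from the vector $e_1$, which is supported on one coordinate. A coordinate $i$ can become nonzero only when $i$ is chosen as a recipient. After $t\le c\,r\log r$ steps of the half-lazy chain, with $c$ small depending on nothing, a constant fraction of at least $r^{1/2}$ coordinates have never been recipients, with high probability. Those coordinates are still $0$. Under stationarity, however, the number of zero coordinates is about $r/p$ with fluctuations $O(\sqrt r)$. Hence
\[
\#\{i:\xi(v_i)=0\}\ \ge\ r-\#\{\text{recipients so far}\},
\]
and this forces a total variation distance near $1$. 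The threshold to check is that the number of untouched coordinates, about $r e^{-t/r}$ (adjusted for laziness and recipient selection), must exceed $r(1-1/p)$ plus the fluctuations. That fails, so I would refine the argument as follows. Start from $e_1$ and use the statistic $\#\{i:\xi(v_i)=0\}$. Its value is at least $r\,e^{-2t/r}$ above what the process would otherwise give, where the untouched coordinates stay zero while touched ones are approximately uniform. Concretely, at stationarity the count concentrates at $r/p\pm O(\sqrt{r\log r})$. At time $t$ the expected count is at least $r/p+(1-1/p)\,\#\text{untouched}$, and $\#\text{untouched}\approx r e^{-2t/r}\gg\sqrt{r}\log r$ when $t\le\frac14 r\log r$. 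A second-moment (Chebyshev) bound on the count at time $t$ then separates the two distributions. The main obstacle is controlling the variance of this statistic at time $t$, because the touched coordinates are not independent. I would handle it by bounding only the untouched count from below (it is a coupon-collector quantity with standard concentration) and bounding the remaining zeros stochastically from below by $0$. After that I only need the stationary probability of a count of at least $r/p+\tfrac12 r^{1/2}\log r$ to be small, which follows from Hoeffding applied to the uniform law of $\Fp^r\setminus\{0\}$, conditioned as in Lemma~\ref{lem:tr-good-stationary}.

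\textbf{The $r(h+1)/\log(er)$ bound.} This is the counting argument. By Lemma~\ref{lem:heis-horizontal-generation}, $|\Omega|=p^{r}\cdot|\{\text{spanning }r\text{-tuples in }W\}|$, and the spanning count is $\prod_{q=0}^{h-1}(p^r-p^q)\ge c\,p^{rh}$ for $r\ge h+1$. So $\log|\Omega|\ge r(h+1)\log p-O(1)$. One step of the half-lazy chain has at most $1+2p\,r(r-1)$ outcomes, so after $t$ steps the law is supported on at most $(1+2pr^2)^t$ states. If $t\le c_p\,r(h+1)/\log(er)$ with $c_p$ small, this support has $\pi$-mass at most $1/2$, which gives total variation distance at least $1/2$. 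Combining the two bounds and shrinking constants, with $r_0(p)$ absorbing the finitely many small cases, proves the theorem.
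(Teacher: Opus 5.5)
Your counting argument for the $r(h+1)/\log(er)$ term is correct and is the same as the paper's. The $r\log r$ term, however, is not established.

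In your refined argument you bound the number of zero coordinates at time $t$ below by the number of coordinates that have never been recipients, and you bound the zeros among touched coordinates below by $0$. After $t=c\,r\log r$ steps of the half-lazy chain, the never-recipient count is about $r e^{-t/(2r)}=r^{1-c/2}=o(r)$. So this lower bound never reaches the threshold $r/p+\tfrac12 r^{1/2}\log r$ that you then compare against the stationary law. The $r/p$ term has silently been dropped, and you are back to the failure you diagnosed in your first attempt.

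Recipient-based (coupon-collector) reasoning by itself only separates the two laws while more than about $r/p$ coordinates are untouched. That means $t=O(r\log p)$, which gives a lower bound of order $r$, not $r\log r$. Your intermediate claim, that the expected zero count is at least $r/p+(1-1/p)\,\#\{\text{untouched}\}$, is never proved. The heuristic behind it, that touched coordinates are approximately uniform, is false: a recipient coordinate stays zero unless its donor is nonzero. At these times almost all touched coordinates are still zero.

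The missing idea is exactly that last observation: track the support $S_t=\#\{i:\xi(v_i(t))\ne0\}$ rather than the set of recipients.
\begin{itemize}
\item By \eqref{eq:heis-support-birth-death}, the birth probability from state $s$ is $B_s\le s/r$, so $\E[S_{t+1}\mid S_t]\le(1+1/r)S_t$.
\item Start from a state with $S_0=1$, for instance the canonical tuple of Lemma~\ref{lem:heis-connectivity} with $\xi(b_1)=1$ and $\xi(b_i)=0$ for $i\ge2$. Then $\E S_t\le e^{t/r}=r^c$ at $t=c\,r\log r$, and laziness only slows this growth.
\item Markov's inequality gives $\Prob(S_t\ge\alpha r)\le r^{c-1}/\alpha=o(1)$ for $c<1$ and $\alpha=\tfrac12(p-1)/p$.
\item Under $\pi$, the support is $\mathrm{Bin}(r,(p-1)/p)$ conditioned to be nonzero, so $\pi(S<\alpha r)$ is exponentially small.
\end{itemize}
This is the paper's argument. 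It needs only a first moment, so the variance difficulty you anticipated does not arise. In your language, the zero count at time $t$ is $r-S_t\ge r-o(r)$ with high probability, far above $r/p$.

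Finally, the reduction to the projected chain is valid because projection cannot \emph{increase} total variation distance; your sentence states the direction backwards.
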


\subsection{Basic Facts about the PA-PRA kernel}

We collect some basic facts.

\begin{lemma}[Connectivity]\label{lem:heis-connectivity}
Assume \(m\ge1\) and \(r\ge h+1\).  The PA-PRA
chain is irreducible on \(\Omega\).
\end{lemma}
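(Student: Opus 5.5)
We prove irreducibility in two stages: first the horizontal coordinates, then the central coordinates with the horizontal part held fixed. Throughout, all moves are invertible, since the inverse of $T^{R/L}_{j\to i,a}$ is $T^{R/L}_{j\to i,-a}$. So it suffices to connect every $g\in\Omega$ to one fixed reference tuple $g^\circ$. Fix a symplectic basis $b_1,\ldots,b_h$ and set $g^\circ=((b_1,0),\ldots,(b_h,0),(0,0),\ldots,(0,0))$. By Lemma~\ref{lem:heis-horizontal-generation}, $g^\circ\in\Omega$.

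\textbf{Step 1 (horizontal part).} By \eqref{eq:heis-horizontal-update}, the horizontal projection is the $p$-ary transvection walk on spanning $r$-tuples in $W$. Moves $v_i\leftarrow v_i+av_j$ generate all elementary row operations of addition type on the $r\times h$ matrix of horizontal vectors. Row swaps are obtained up to sign from three additions, and sign changes can be produced as well, for instance $v_i\to -v_i$ via a swap composite. Since $r\ge h+1$, the spanning condition gives $h$ rows that form a basis. Gaussian elimination then reduces any spanning tuple to $(b_1,\ldots,b_h,0,\ldots,0)$, and any basis can be moved to any other because $\mathrm{SL}_h(\Fp)$ is generated by transvections; determinant adjustments can be absorbed using a zero row, or by scaling a row through additions. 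Lifting these moves to $\Heis$ (say using only right moves) brings $g$ to a tuple with horizontal part equal to that of $g^\circ$ and some arbitrary central coordinates $z_1,\ldots,z_r$.

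\textbf{Step 2 (central part).} It remains to change the $z_i$ one at a time without altering the horizontal part. Here we use the extra row: since $r\ge h+1$, after Step 1 some row is $(0,z_r)$. Applying $T^R_{r\to i,a}$ multiplies $g_i$ by the central element $(0,az_r)$, so if $z_r\ne0$ we can set every other $z_i$ freely. To create and then remove a nonzero central value in row $r$, use a commutator-type loop. First do $g_r\leftarrow g_r g_1$, then $g_r\leftarrow g_r g_2$, then $g_r\leftarrow g_r g_1^{-1}$, then $g_r\leftarrow g_r g_2^{-1}$. The net horizontal change is zero, and by \eqref{eq:heis-law} the central coordinate changes by $\omega(b_1,b_2)=1$ (up to sign). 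Iterating this loop lets us set $z_r$ to any value. So we first make $z_r$ nonzero, then use it to adjust the other rows, and finally reset $z_r$ to $0$ with more loops. The same loop structure also adjusts $z_1,\ldots,z_h$ directly, for example by using row $r$ as a horizontal carrier. One can alternatively compare left and right moves: $g_j^ag_i$ and $g_ig_j^a$ differ by the central element $(0,a\,\omega(v_j,v_i))$, which gives another direct way to shift central coordinates.

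\textbf{Main obstacle.} The bookkeeping in Step 2 is the delicate part. We must check that the commutator loop leaves the horizontal part exactly unchanged and shifts the center by a nonzero amount. This is immediate from \eqref{eq:heis-law} and the commutator formula $[(v,z),(w,t)]=(0,\omega(v,w))$. The assumption $r\ge h+1$ is exactly what supplies the spare row carrying the central adjustments. Once both steps are in place, irreducibility on $\Omega$ follows.
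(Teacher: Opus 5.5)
Your proposal is correct and follows essentially the same route as the paper. You reduce the horizontal tuple to $(b_1,\ldots,b_h,0,\ldots,0)$ by right moves, using a zero row to absorb the determinant, then run the commutator loop $g_r\leftarrow g_r[g_1,g_2]$ on a spare zero-horizontal row to create a nonzero central donor that normalizes the other central coordinates; the only differences are cosmetic (the paper targets $(0,1)$ in the spare row so no reset is needed, and shifts by $a$ in one pass via $[g_1^a,g_2]$).
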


\begin{proof}
Since the walk is reversible, it is enough to find a path from every \(g\in\Omega\) to one fixed element $g^\circ$.  Fix a symplectic basis \(b_1,\ldots,b_h\) of
\(W\), with \(\omega(b_1,b_2)=1\), and set the destination point
\[
    g^\circ=((b_1,0),\ldots,(b_h,0),(0,1),(0,0),\ldots,(0,0)).
\]
Now fix a generic point \(g=(g_1,\ldots,g_r)\in\Omega\), write \(g_i=(v_i,z_i)\), and set
\[
    B=(b_1,\ldots,b_h,0,\ldots,0)\in W^r.
\]
We first write a sequence of moves that start at $g$ and end at a point whose horizontal tuple is \(B\).  Since
\((v_1,\ldots,v_r)\) has rank \(h\), it is possible to choose \(M_0\in\mathrm{GL}_r(\Fp)\) with
\(M_0(v_1,\ldots,v_r)=B\).  Put \(d=\det M_0\).  Since \(r>h\), the diagonal
matrix
\[
    D=\operatorname{diag}(1,\ldots,1,d^{-1},1,
    \ldots,1),
\]
with \(d^{-1}\) in position \(h+1\), fixes \(B\).  Thus
\(M=DM_0\in\mathrm{SL}_r(\Fp)\) also satisfies \(M(v_1,\ldots,v_r)=B\).  Since
\(\mathrm{SL}_r(\Fp)\) is generated by elementary transvections, there exists a sequence of right moves (that is, moves of the form $T^R_{j\to i,a}$) that act on the horizontal parts of $g$ just like the matrix $M$. In particular, they transform the
horizontal tuple to \(B\).

After reducing to the case that the vector of horizontal elements is in $B$, keep the notation \(g\) for the current state.
Its horizontal tuple is \(B\), so \(g_{h+1}=(0,z_{h+1})\).  We next change
this coordinate to \((0,1)\).  For any \(a\in\Fp\), four right moves on the
recipient coordinate \(h+1\), with successive donors \(1,2,1,2\) and exponents
\(a,1,-a,-1\), replace \(g_{h+1}\) by
\[
    g_{h+1}g_1^a g_2 g_1^{-a}g_2^{-1}
    =g_{h+1}[g_1^a,g_2]
    =g_{h+1}(0,a).
\]
Taking \(a=1-z_{h+1}\) makes \(g_{h+1}=(0,1)\), without changing any horizontal coordinate.

We can now use the element \(g_{h+1}=(0,1)\) as a donor to standardize all other rows. For \(1\le i\le h\), write
\(g_i=(b_i,z_i)\) and replace \(g_i\) by \(g_i g_{h+1}^{-z_i}\).  For
\(i>h+1\), write \(g_i=(0,z_i)\) and replace \(g_i\) by
\(g_i g_{h+1}^{-z_i}\).  These moves send \(g\) to \(g^\circ\).  Reversing the
sequence connects \(g^\circ\) to every state, so the chain is irreducible on
\(\Omega\).
\end{proof}

For a fixed recipient coordinate \(i\) and frozen donors \((g_j)_{j\ne i}\),
conditioning on \(i\) being the recipient and projecting to the fibre gives the one-coordinate fibre kernel
\begin{equation}\label{eq:heis-fibre-kernel}
    K_i\varphi(x)
    =\frac1{2(r-1)}\sum_{j\ne i}\frac1p\sum_{a\in\Fp}
    \left[\varphi(xg_j^a)+\varphi(g_j^a x)\right],
    \qquad x\in\Heis.
\end{equation}
Equivalently, define the operators $R_{\nu}, L_{\nu}$ by the formulas
\begin{equation} \label{EqLeftRightDef}
R_\nu f(x)=\sum_{g\in\Heis}\nu(g)f(xg),
\qquad
L_\nu f(x)=\sum_{g\in\Heis}\nu(g)f(gx).
\end{equation}
Then 
\[
    K_i=\frac12(R_{\nu_i}+L_{\nu_i}),
    \qquad
    \nu_i=\frac1{r-1}\sum_{j\ne i}u_{\langle g_j\rangle},
\]
where \(u_{\langle g_j\rangle}\) is the uniform measure on the cyclic subgroup
\(\langle g_j\rangle\).

\subsection{The horizontal good set and stationary size}\label{subsec:heis-good-set}

For \(g=(g_1,\ldots,g_r)\in\Heis^r\), write \(g_i=(v_i,z_i)\).  For
\(\xi\in W^*\), define
\[
    N_\xi(g)=|\{i\in[r]:\xi(v_i)=0\}|
\]
and the good set 
\begin{equation}\label{eq:heis-good-set}
    \Good_{\mathrm H}
    =\left\{g\in\Omega:
    N_\xi(g)\le\beta_0r
    \text{ for every }0\ne\xi\in W^*\right\}.
\end{equation}

We now fix various special constants that will be used as scales for the burn-in argument and the remainder of the proof. Fix \(\varepsilon\in(0,1)\).  Define the function
\[
    I_p(\beta)
    =\beta\log(\beta p)
     +(1-\beta)\log\left(\frac{(1-\beta)p}{p-1}\right),
    \qquad \frac1p<\beta<1,
\]
which is the large-deviation rate for the upper tail of a
\(\operatorname{Bin}(r,1/p)\) random variable.  Since
\(I_p(\beta)\uparrow\log p\) as \(\beta\uparrow1\), it is possible to choose
\(\beta_0=\beta_0(p,\varepsilon)\) so that
\begin{equation}\label{eq:heis-beta-choice}
    \frac1p<\beta_0<1,
    \qquad
    \varepsilon\log p<I_p(\beta_0).
\end{equation}
Set
\begin{equation}\label{eq:heis-beta0-beta1}
    \beta_1=\frac12(1+\beta_0)
\end{equation}
and
\[
    \alpha_0=1-\beta_0,
    \qquad
    \alpha_* = \frac{p-1}{p}.
\]
Define
\begin{equation}\label{eq:heis-J-rate}
    J_p(a,b)=\int_a^b \log\left(\frac{(p-1)(1-u)}{u}\right)\,du,
    \qquad 0<a<b<\alpha_*.
\end{equation}
A direct calculation gives
\begin{equation}\label{eq:heis-J-equals-I}
    J_p(\alpha_0,\alpha_*)=I_p(\beta_0).
\end{equation}
By \eqref{eq:heis-beta-choice}, it is possible to choose \(\alpha_1\in(\alpha_0,\alpha_*)\) and
\(\eta_0>0\) such that
\begin{equation}\label{eq:heis-alpha1-choice}
    J_p(\alpha_0,\alpha_1)\ge \varepsilon\log p+3\eta_0.
\end{equation}

\begin{lemma}[Stationary size of the good set]\label{lem:heis-good-stationary}
Fix $p, \varepsilon$ as above. There is a constant \(c_{p,\varepsilon}>0\) such that, for all \(h\le\varepsilon r\),
\[
    \pi(\Good_{\mathrm H}^c)\le e^{-c_{p,\varepsilon}r}.
\]
Similarly, if \(\mu\) is uniform on \(\Heis^r\), then
\[
    \mu(\Good_{\mathrm H}) \geq 1-o(1).
\]
\end{lemma}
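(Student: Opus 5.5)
The plan follows the proof of Lemma~\ref{lem:tr-good-stationary}: a union bound over nonzero functionals, each handled by a binomial large-deviation estimate. The exponent now competes against $|W^*\setminus\{0\}| = p^h-1\le p^{\varepsilon r}$ rather than $2^k$, so $\beta_0$ must be chosen via \eqref{eq:heis-beta-choice}.

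\textbf{First bound, under $\pi$.} Fix $0\ne\xi\in W^*$. By Lemma~\ref{lem:heis-horizontal-generation}, $\Omega$ is exactly the set of tuples whose horizontal parts span $W$, and the central coordinates are unconstrained. Hence the horizontal projection of $\pi$ is uniform on spanning $r$-tuples in $W\cong\Fp^h$.

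The projection $(v_1,\ldots,v_r)\mapsto(\xi(v_1),\ldots,\xi(v_r))$ is then uniform on $\Fp^r\setminus\{0\}$. This uses the same $\mathrm{GL}_r(\Fp)$-transitivity argument as in Lemma~\ref{lem:tr-good-stationary}: choose coordinates with $\xi$ the first coordinate functional, and note that left multiplication by $A\in\mathrm{GL}_r(\Fp)$ bijects the full-rank $r\times h$ matrices with first column $q$ onto those with first column $Aq$.

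So $N_\xi$ has the law of $\operatorname{Bin}(r,1/p)$ conditioned on the sample not being all zeros. That conditioning event has probability $1-p^{-r}\ge 1/2$. The Chernoff bound for the binomial upper tail with rate $I_p$ then gives, since $\beta_0>1/p$,
\[
\pi(N_\xi>\beta_0 r)\le 2e^{-rI_p(\beta_0)}.
\]
The union bound over the $p^h-1\le e^{\varepsilon r\log p}$ nonzero functionals yields
\[
\pi(\Good_{\mathrm H}^c)\le 2\exp\bigl(-r(I_p(\beta_0)-\varepsilon\log p)\bigr).
\]
By \eqref{eq:heis-beta-choice} the exponent is strictly positive. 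Absorbing the factor $2$ by shrinking the constant (for large $r$, or by adjusting the constant for small $r$) gives $e^{-c_{p,\varepsilon}r}$.

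\textbf{Second bound, under $\mu$.} Here the $v_i$ are i.i.d.\ uniform on $W$, so for each $\xi\ne0$ the count $N_\xi$ is exactly $\operatorname{Bin}(r,1/p)$. The same Chernoff estimate and union bound give
\[
\mu\bigl(\exists\,\xi\ne0:\ N_\xi>\beta_0 r\bigr)\le e^{-cr}.
\]
It remains to handle the condition $g\in\Omega$ built into $\Good_{\mathrm H}$. If the $v_i$ fail to span $W$, some $\xi\ne0$ annihilates all of them, giving $N_\xi=r>\beta_0 r$. So, exactly as in the transvection case, the event $\{N_\xi\le\beta_0 r\ \forall\xi\ne0\}$ is automatically contained in $\Omega$. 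Therefore $\mu(\Good_{\mathrm H})\ge 1-e^{-cr}=1-o(1)$.

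\textbf{Main obstacle.} The only delicate point is the uniformity claim for the projected vector under $\pi$, and it transfers verbatim from the $\Ft$ case. The rest is routine bookkeeping: checking that the rate $I_p(\beta_0)$ beats the entropy $\varepsilon\log p$ of the union bound, which is precisely what \eqref{eq:heis-beta-choice} was designed to guarantee.
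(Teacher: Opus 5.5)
Your proof is correct and is essentially the paper's argument. Both use uniformity of the $\xi$-projection on $\Fp^r\setminus\{0\}$ via $\mathrm{GL}_r(\Fp)$-transitivity, a binomial Chernoff bound at rate $I_p(\beta_0)$, and a union bound over the $p^h-1$ functionals; your exact Chernoff exponent and your containment $\{N_\xi\le\beta_0 r\ \forall\xi\ne0\}\subseteq\Omega$ under $\mu$ are slightly tidier than the paper's $\eta$-loss and its separate $p^{h-r}$ term.

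The one loose phrase is ``adjusting the constant for small $r$'', which would need $\pi(\Good_{\mathrm H}^c)<1$, and your factor $2$ does not give that. It does follow from your own computation once you note $(p^h-1)/(1-p^{-r})\le p^h$ (since $h\le r$), which yields $\pi(\Good_{\mathrm H}^c)\le e^{-r(I_p(\beta_0)-\varepsilon\log p)}$ for every $r$.
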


\begin{proof}
Under \(\pi\), the horizontal tuple \((v_1,\ldots,v_r)\) is uniform on
\[
    \operatorname{St}_{\Heis}(r,h)=\{(v_1,\ldots,v_r)\in W^r:\Span(v_1,\ldots,v_r)=W\},
\]
and, conditionally on this tuple, the central coordinates are independent and
uniform in \(\Fp\).

Fix \(0\ne\xi\in W^*\).  We claim that, under the uniform law on \(\operatorname{St}_{\Heis}(r,h)\),
the projection
\[
    (v_1,\ldots,v_r)\mapsto(\xi(v_1),\ldots,\xi(v_r))
\]
is uniform on \(\Fp^r\setminus\{0\}\).  To see this, note that after choosing coordinates so that
\(\xi\) is the first coordinate functional, the number of ways to complete a
fixed nonzero first coordinate column to a full-rank \(r\times h\) matrix is the
same for every nonzero column (this follows from the fact that \(\mathrm{GL}_r(\Fp)\) acts transitively on \(\Fp^r\setminus\{0\}\)).

Consequently, if $g \sim \pi$, then  \(N_\xi = N_{\xi}(g)\) has the law of the number of zero coordinates in an iid
uniform vector \(a \in \Fp^r\), conditioned on the event $\mathcal{E} = \{a \not\equiv 0\}$ that the vector $a$ is not identically
zero.  Since $\mathcal{E}$  has probability \(1-p^{-r}\), Chernoff's inequality and \eqref{eq:heis-beta-choice} give, for every sufficiently small
\(\eta_{p,\varepsilon}>0\),
\[
    \pi(N_\xi>\beta_0 r)
    \le C_p\exp\{-(I_p(\beta_0)-\eta_{p,\varepsilon})r\}.
\]
Choose \(\eta_{p,\varepsilon} > 0\) sufficiently small that this bound applies and also 
\(\varepsilon\log p<I_p(\beta_0)-2\eta_{p,\varepsilon}\).  A union bound over the
\(p^h-1\) nonzero linear functionals $\xi \in W^{\ast}$ gives
\[
    \pi(\Good_{\mathrm H}^c)
    \le C_p\exp\{h\log p-(I_p(\beta_0)-\eta_{p,\varepsilon})r\}
    \le e^{-c_{p,\varepsilon}r}
\]
for some \(c_{p,\varepsilon}>0\), whenever \(h\le\varepsilon r\) and \(r\) is
large.

For \(X\sim\mu\), the same Chernoff bound and union bound give
\[
    \mu\left(\exists\,0\ne\xi\in W^*:N_\xi(X)>\beta_0 r\right)
    \le p^h\exp\{-(I_p(\beta_0)-\eta_{p,\varepsilon})r\}.
\]
Also,
\[
\begin{aligned}
    \mu(\Heis^r\setminus\Omega)
    &=\Prob(\Span(v_1,\ldots,v_r)\ne W)\\
    &\le \sum_{0\ne\xi\in W^*}\Prob(\xi(v_1)=\cdots=\xi(v_r)=0)\\
    &\le p^h p^{-r}.
\end{aligned}
\]
Therefore
\[
    \mu(\Good_{\mathrm H}^c)
    \le p^h\exp\{-(I_p(\beta_0)-\eta_{p,\varepsilon})r\}+p^{h-r}.
\]
Because \(h\le\varepsilon r\) with \(\varepsilon<1\), the last display is
\(o(1)\). Hence \(\mu(\Good_{\mathrm H})=1-o(1)\).
\end{proof}

\subsection{Burn-in}\label{subsec:heis-burnin}

We prove that the PA-PRA spends most of its time in the good set \eqref{eq:heis-good-set} after a suitable burn-in period. We will again take advantage of copies of the one-column transvection walk.

Let \(Y_t\) be the following one-column \(p\)-ary transvection walk on
\(\Fp^r\setminus\{0\}\): at each step choose distinct \(i,j\in[r]\) and
\(a\in\Fp\) uniformly, and replace \(Y_i\) by \(Y_i+aY_j\).  Let
\[
    S_t=|\{i:Y_t(i)\ne0\}|.
\]

Note that \((S_t)\) is a birth-death chain on \(\{1,\ldots,r\}\).  From state \(s\),
its birth and death probabilities are
\begin{equation}\label{eq:heis-support-birth-death}
    B_s=\frac{p-1}{p}\frac{s(r-s)}{r(r-1)},
    \qquad
    D_s=\frac1p\frac{s(s-1)}{r(r-1)}.
\end{equation}
A birth occurs when the donor is nonzero, the recipient is zero, and \(a\ne0\);
a death occurs when donor and recipient are both nonzero and the unique value
of \(a\) making the recipient zero is chosen.

\begin{lemma}[Support growth]\label{lem:heis-support-growth}
Fix \(\alpha\in(0,\alpha_*)\).  There is a constant \(C_{p,\alpha}<\infty\) such
that, for all sufficiently large \(r\), the hitting time
\[
    \tau_\alpha=\inf\{t:S_t\ge\alpha r\}
\]
satisfies
\begin{equation}\label{eq:heis-support-growth-mean}
    \sup_{1\le s\le r}\E_s\tau_\alpha\le C_{p,\alpha}r\log r.
\end{equation}
Consequently, for a suitable \(B_{p,\alpha}<\infty\),
\begin{equation}\label{eq:heis-support-hit-block}
    \sup_{1\le s\le r}\Prob_s(\tau_\alpha>B_{p,\alpha}r\log r)\le\frac12.
\end{equation}
\end{lemma}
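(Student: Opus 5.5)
We bound the expected hitting time with the standard birth--death formula. For a nearest-neighbour chain with possible holding, the expected time to move from $j$ to $j+1$ is
\[
    e_j=\frac{1}{B_j}\sum_{i\le j}\prod_{\ell=i+1}^{j}\frac{D_\ell}{B_\ell}
    =\frac{1}{B_j}\sum_{i\le j}\prod_{\ell=i+1}^{j}\rho_\ell .
\]
This comes from solving $e_j=1/B_j+(D_j/B_j)e_{j-1}$ with $e_1=1/B_1$, since state $1$ has no deaths. Then $\sup_s\E_s\tau_\alpha\le\sum_{1\le j<\alpha r}e_j$, because starting from $s$ the chain must pass through every level $j$ from $s$ to $\lceil\alpha r\rceil-1$, and for $s\ge\alpha r$ we have $\tau_\alpha=0$.

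The key computation is the ratio
\[
    \rho_\ell=\frac{D_\ell}{B_\ell}=\frac{\ell-1}{(p-1)(r-\ell)}.
\]
For $\ell\le\alpha r$ with $\alpha<\alpha_*=(p-1)/p$ this gives
\[
    \rho_\ell\le\frac{\alpha}{(p-1)(1-\alpha)}=:\theta<1,
\]
because $\alpha<(p-1)(1-\alpha)$ is equivalent to $\alpha<(p-1)/p$. Hence $\prod_{\ell=i+1}^{j}\rho_\ell\le\theta^{\,j-i}$, and the inner sum is at most $1/(1-\theta)$. Therefore
\[
    e_j\le\frac{1}{(1-\theta)B_j}.
\]
Since $r-j\ge(1-\alpha)r$ for $j<\alpha r$,
\[
    B_j=\frac{p-1}{p}\,\frac{j(r-j)}{r(r-1)}\ge\frac{p-1}{p}(1-\alpha)\frac{j}{r}.
\]
Summing,
\[
    \sum_{j<\alpha r}e_j\le\frac{p}{(p-1)(1-\alpha)(1-\theta)}\,r\sum_{j\le r}\frac1j\le C_{p,\alpha}\,r\log r,
\]
which proves \eqref{eq:heis-support-growth-mean}.

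The second claim \eqref{eq:heis-support-hit-block} follows from Markov's inequality. Taking $B_{p,\alpha}=2C_{p,\alpha}$ gives
\[
    \Prob_s\bigl(\tau_\alpha>B_{p,\alpha}r\log r\bigr)\le\frac{\E_s\tau_\alpha}{2C_{p,\alpha}r\log r}\le\frac12 .
\]

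The only delicate point is the geometric bound on the products $\prod\rho_\ell$. This is exactly where the hypothesis $\alpha<\alpha_*$ is needed, and it must hold uniformly for all $\ell$ up to $\alpha r$, including small $\ell$, where $\rho_\ell$ is in fact even smaller. The remaining work is routine: verifying the formulas \eqref{eq:heis-support-birth-death} and the identity for $e_j$.
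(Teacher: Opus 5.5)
Your strategy is the same as the paper's: the passage-time recursion for the birth--death chain, the ratio bound $\rho_\ell\le\theta<1$ coming from $\alpha<\alpha_*$, the lower bound $B_j\gtrsim j/r$, a harmonic sum, and Markov's inequality. Your recursion $e_j=1/B_j+\rho_j e_{j-1}$ with $e_1=1/B_1$ is also right. However, the closed form you display does not follow from it. Iterating gives
\[
e_j=\frac1{B_j}+\frac{\rho_j}{B_{j-1}}+\frac{\rho_j\rho_{j-1}}{B_{j-2}}+\cdots=\sum_{i\le j}\frac1{B_i}\prod_{\ell=i+1}^{j}\rho_\ell ,
\]
which has $1/B_i$, not $1/B_j$, inside the sum. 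Already at $j=2$ your formula gives $(1+\rho_2)/B_2$, while the correct value is $1/B_2+\rho_2/B_1$, and $B_1<B_2$ once $r>3$. In general $B_i<B_j$ whenever $i<j$ and $i+j<r$. So the bound $\rho_\ell\le\theta$ alone does not yield $e_j\le 1/((1-\theta)B_j)$, and that step is unjustified as written. Your remark that verifying the identity for $e_j$ is routine is therefore mistaken: the identity is false.

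The repair is easy, in either of two ways. The paper's route is to use $1/B_i\le \kappa r/i$ with $\kappa=p/((p-1)(1-\alpha))$ inside the correct sum and then exchange the order of summation:
\[
\sum_{j=1}^{\lceil\alpha r\rceil-1}e_j\le \kappa r\sum_{j=1}^{\lceil\alpha r\rceil-1}\sum_{i=1}^{j}\frac{\theta^{j-i}}{i}
= \kappa r\sum_{i=1}^{\lceil\alpha r\rceil-1}\frac1i\sum_{j=i}^{\lceil\alpha r\rceil-1}\theta^{j-i}\le\frac{\kappa}{1-\theta}\,r(1+\log r).
\]
Alternatively, you can keep your per-level bound by telescoping differently:
\[
\frac1{B_i}\prod_{\ell=i+1}^{j}\frac{D_\ell}{B_\ell}=\frac1{B_j}\prod_{\ell=i}^{j-1}\frac{D_{\ell+1}}{B_\ell},
\qquad
\frac{D_{\ell+1}}{B_\ell}=\frac{\ell+1}{(p-1)(r-\ell)}<\theta\quad(\ell+1\le j<\alpha r).
\]
This is the reversible-measure formula $e_j=\frac{1}{B_j\pi_j}\sum_{i\le j}\pi_i$. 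It shows that $e_j\le 1/((1-\theta)B_j)$ does hold, but with the shifted ratios $D_{\ell+1}/B_\ell$ in place of $\rho_\ell$. With either fix, the rest of your argument goes through unchanged, including the summation and the Markov step.
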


\begin{proof}
Set \(A=\lceil\alpha r\rceil\).  If \(s\ge A\), then \(\tau_\alpha=0\), so
we consider \(1\le s<A\).  We first control the birth-death ratios below
\(A\).  For \(1\le s<A\), set
\begin{equation} \label{EqBDRatio}
    \rho_s=\frac{D_s}{B_s}=\frac{s-1}{(p-1)(r-s)}.
\end{equation}
Since \(\alpha<\alpha_*=(p-1)/p\), there is \(q=q(p,\alpha)<1\) such that
\(\rho_s\le q\) for all \(s<A\) and all large \(r\).  Also
\[
    B_s\ge c_{p,\alpha}\frac{s}{r},\qquad 1\le s<A.
\]
Let \(\tau_A=\inf\{t:S_t=A\}\).  Since the chain moves by at most one at
each step and \(A=\lceil \alpha r\rceil\), this agrees with \(\tau_\alpha\) on
the event \(S_0<A\).  For \(1\le s\le A\), write
\[
    e_s=\E_s\tau_A ,
    \qquad e_A=0,
\]
and put \(d_s=e_s-e_{s+1}\) for \(1\le s\le A-1\). We have the following recurrence for \(1\le s\le A-1\),
\begin{equation}\label{eq:heis-support-hitting-recursion}
    B_s(e_s-e_{s+1})+D_s(e_s-e_{s-1})=1,
\end{equation}
with the convention that the term involving \(e_0\) is absent when \(s=1\).
Equivalently, since \(D_1=0\),
\[
    d_s=\frac1{B_s}+\rho_s d_{s-1},
    \qquad
    \rho_s=\frac{D_s}{B_s},
    \qquad d_0=0.
\]
Iterating this recursion yields
\begin{equation}\label{eq:heis-support-hitting-formula}
    d_k
    =
    \sum_{\ell=1}^k
    \frac1{B_\ell}
    \prod_{m=\ell+1}^k \rho_m ,
    \qquad 1\le k\le A-1,
\end{equation}
where the empty product is interpreted as \(1\).  Since
\(e_s=\sum_{k=s}^{A-1}d_k\), \eqref{eq:heis-support-hitting-formula} gives
\[
\begin{aligned}
    e_s
    &\le
    \sum_{k=1}^{A-1}
    \sum_{\ell=1}^k
    \frac1{B_\ell}
    \prod_{m=\ell+1}^k \rho_m  \\
    &\le
    C_{p,\alpha} r
    \sum_{k=1}^{A-1}
    \sum_{\ell=1}^k
    \frac{q^{k-\ell}}{\ell}  \\
    &\le
    C_{p,\alpha} r
    \sum_{\ell=1}^{A-1}
    \frac1\ell
    \sum_{k=\ell}^{A-1} q^{k-\ell}  \\
    &\le
    C_{p,\alpha} r
    \sum_{\ell=1}^{A-1}\frac1\ell
    \le C_{p,\alpha}r\log r .
\end{aligned}
\]
Here we used \(\rho_m\le q<1\) and
\(B_\ell\ge c_{p,\alpha}\ell/r\) for \(\ell<A\).  This proves
\eqref{eq:heis-support-growth-mean}.  Choosing \(B_{p,\alpha}\) sufficiently
large, Markov's inequality gives
\[
    \sup_{1\le s\le r}
    \Prob_s(\tau_\alpha>B_{p,\alpha}r\log r)
    \le \frac12,
\]
which is \eqref{eq:heis-support-hit-block}.
\end{proof}

\begin{lemma}[Downward crossing]\label{lem:heis-downward-crossing}
Let \(0<\alpha_0<\alpha_1<\alpha_*\), and define \(J_p\) by
\eqref{eq:heis-J-rate}.  For every \(T\le r^3\),
\[
    \sup_{s\ge\alpha_1 r}
    \Prob_s\left(\inf_{0\le t\le T}S_t<\alpha_0 r\right)
    \le T\exp\{-(J_p(\alpha_0,\alpha_1)+o(1))r\},
\]
where the implicit constant in the \(o(1)\) term depends only on \(p,\alpha_0,\alpha_1\) and, when those are fixed, tends to zero as
\(r\to\infty\).
\end{lemma}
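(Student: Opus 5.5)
We treat $(S_t)$ as a birth-death chain and use the standard gambler's-ruin / reversible-measure estimate for excursions. Set $A_0=\lceil\alpha_0 r\rceil$ and $A_1=\lfloor\alpha_1 r\rfloor$. Since the chain moves by at most one per step, any path starting at $s\ge\alpha_1 r$ that reaches a value below $\alpha_0 r$ within time $T$ must first pass through $A_1$. By the strong Markov property it therefore suffices to bound, starting from $A_1$, the probability of hitting $A_0-1$ before time $T$. Each step is a birth, death, or hold, and we only care about the embedded jump chain. So we bound the number of returns to $A_1$ crudely by $T$. On each excursion from $A_1$, the probability of reaching $A_0-1$ before returning to $A_1$ is at most the probability, for the jump chain started at $A_1-1$, of hitting $A_0-1$ before $A_1$.

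For a birth-death chain this last probability has the explicit gambler's-ruin form
\[
    \Prob_{A_1-1}(\text{hit }A_0-1\text{ before }A_1)
    =\frac{\sum_{k=A_1-1}^{A_1-1}\prod_{m=A_0}^{k}\rho_m^{-1}\cdot(\ldots)}{\ldots}
    \le \prod_{m=A_0}^{A_1-1}\rho_m ,
\]
with $\rho_m=D_m/B_m=\frac{m-1}{(p-1)(r-m)}$ as in \eqref{EqBDRatio}. More cleanly, with scale function weights $w_k=\prod_{m\le k}\rho_m$, the escape probability equals $w_{A_1-1}/\sum_{k=A_0-1}^{A_1-1}w_k\le w_{A_1-1}/w_{A_0-1}$. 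On $[\alpha_0 r,\alpha_1 r]$ we have $\alpha_1<\alpha_*$, so $\rho_m<1$ and each $w_k$ is decreasing. The bound therefore equals $\prod_{m=A_0}^{A_1-1}\rho_m$. Taking logarithms gives a Riemann sum:
\[
    -\sum_{m=A_0}^{A_1-1}\log\rho_m
    =\sum_m\log\frac{(p-1)(r-m)}{m-1}
    = r\int_{\alpha_0}^{\alpha_1}\log\frac{(p-1)(1-u)}{u}\,du+O(\log r)
    = (J_p(\alpha_0,\alpha_1)+o(1))r .
\]
The integrand is smooth and bounded on the compact interval $[\alpha_0,\alpha_1]\subset(0,1)$, so the Riemann-sum error is $O(1)$, plus $O(\log r)$ from the boundary rounding and the shift $m-1$ versus $m$. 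All of this is uniform in $s$ and depends only on $p,\alpha_0,\alpha_1$.

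Finally, a union bound over the at most $T$ excursions started before time $T$ gives
\[
    \Prob_s\Bigl(\inf_{t\le T}S_t<\alpha_0r\Bigr)\le T\,e^{-(J_p(\alpha_0,\alpha_1)+o(1))r}.
\]
More precisely, each downward step from $A_1$ to $A_1-1$ begins a trial, there are at most $T$ such steps, and each trial succeeds with probability at most the escape bound, independently of the past by the strong Markov property. The hypothesis $T\le r^3$ is used only to note that the factor $T$ is polynomial, so it could also be absorbed into the $o(1)$ if desired. The main point of care is the exact direction of the scale-function ratio: we want the probability of descending the full stretch against a drift whose ratio is $\rho_m<1$. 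Everything else is routine.
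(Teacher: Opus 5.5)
Your proposal is correct and follows essentially the same route as the paper. You pass to the embedded jump chain, bound each descent attempt (beginning with a jump $A_1\to A_1-1$) by the gambler's-ruin ratio $\prod_m\rho_m$, union-bound over at most $T$ attempts via the strong Markov property, and evaluate $\log\prod_m\rho_m$ as a Riemann sum for $-rJ_p(\alpha_0,\alpha_1)$; your rounding conventions for $A_0,A_1$ differ from the paper's, but this changes nothing. The first display, with $\rho_m^{-1}$ and ellipses, is garbled and should be deleted in favor of your clean scale-function bound $w_{A_1-1}/\sum_{k=A_0-1}^{A_1-1}w_k\le\prod_{m=A_0}^{A_1-1}\rho_m$, which is the paper's estimate.
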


\begin{proof}
It is enough to consider integer \(T\); otherwise replace \(T\) by
\(\lfloor T\rfloor\).  Set
\[
    A_0=\lfloor \alpha_0 r\rfloor,
    \qquad
    A_1=\lceil \alpha_1 r\rceil .
\]
Let \(\widehat S_n\) be the embedded jump chain obtained from \(S_t\). If \(S_t\) reaches \(\{1,\ldots,A_0\}\) within \(T\) steps, then \(\widehat S_n\) does as well, so it is enough to bound
\[
    \sup_{s\ge A_1}
    \Prob_s\left(\inf_{0\le n\le T}\widehat S_n\le A_0\right).
\]
For \(1\le m\le r-1\), set
\[
    \rho_m=\frac{D_m}{B_m}
    =
    \frac{m-1}{(p-1)(r-m)}.
\]
Let \(\widehat\tau_a=\inf\{n\ge0:\widehat S_n=a\}\).  The standard
birth-death hitting-probability formula gives, for \(A_0\le s\le A_1\),
\[
    \Prob_s(\widehat\tau_{A_0}<\widehat\tau_{A_1})
    =
    \frac{
        \sum_{k=s}^{A_1-1}\prod_{m=A_0+1}^k\rho_m
    }{
        \sum_{k=A_0}^{A_1-1}\prod_{m=A_0+1}^k\rho_m
    },
\]
with the empty product interpreted as \(1\).  In particular,
\[
    \Prob_{A_1-1}(\widehat\tau_{A_0}<\widehat\tau_{A_1})
    \le
    \prod_{m=A_0+1}^{A_1-1}\rho_m .
\]
Every path from \([A_1,\infty)\) to \(\{1,\ldots,A_0\}\) has an attempt
beginning with a jump from \(A_1\) to \(A_1-1\).  There are at most \(T\)
such attempts among the first \(T\) steps of \(\widehat S\).  By the strong Markov property and a union bound,
\begin{equation} \label{IneqCrossingPenultimate}
    \Prob_s\left(\inf_{0\le n\le T}\widehat S_n\le A_0\right)
    \le
    T\prod_{m=A_0+1}^{A_1-1}\rho_m .
\end{equation}

Finally, recalling that $\alpha_{0}>0$ and $\alpha_{1} < 1$,
\[
\begin{aligned}
 \log\left(\prod_{m=A_0+1}^{A_1-1}\rho_m\right) 
    &=
    -\sum_{m=A_0+1}^{A_1-1}
    \log\left(\frac{(p-1)(r-m)}{m-1}\right)  \\
    &=
    -r\int_{\alpha_0}^{\alpha_1}
    \log\left(\frac{(p-1)(1-u)}{u}\right)\,du+o(r)  \\
    &=
    -(J_p(\alpha_0,\alpha_1)+o(1))r .
\end{aligned}
\]
Combining with Inequality \eqref{IneqCrossingPenultimate}  completes the proof.
\end{proof}

\begin{lemma}\label{lem:heis-scalar-burnin}
For every fixed \(M<\infty\) there is a constant
\(C_{p,\varepsilon,M}<\infty\) such that the following holds.  Let \(Y_t\) be
the one-column \(p\)-ary transvection walk on \(\Fp^r\setminus\{0\}\), and let
\(Z_t=|\{i:Y_t(i)=0\}|\).  If
\[
    t_*=C_{p,\varepsilon,M}r\log r\,(h+M\log r),
    \qquad h\le\varepsilon r,
\]
then, uniformly in the initial state and in all \(t\ge t_*\),
\begin{equation}\label{eq:heis-scalar-burnin-tail}
    \Prob(Z_t>\beta_0 r)
    \le p^{-h}r^{-M-1}+\exp\{-(J_p(\alpha_0,\alpha_1)+o(1))r\}.
\end{equation}
\end{lemma}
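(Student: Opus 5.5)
The plan is to translate the event into a statement about the support size and then combine Lemma~\ref{lem:heis-support-growth} with Lemma~\ref{lem:heis-downward-crossing} over a single window of length \(t_*\) ending at time \(t\). Since \(Z_t=r-S_t\) and \(\alpha_0=1-\beta_0\), we have
\[
\{Z_t>\beta_0 r\}=\{S_t<\alpha_0 r\}.
\]
So it suffices to bound \(\Prob(S_t<\alpha_0 r)\) uniformly in the initial state and in \(t\ge t_*\). Fix such a \(t\) and put \(t_0=t-t_*\ge0\). By the Markov property at time \(t_0\), and taking a supremum over the state \(S_{t_0}\in\{1,\ldots,r\}\), it is enough to show the following. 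Started from an arbitrary \(s\), the chain \(S\) satisfies \(S_{t_*}\ge\alpha_0 r\) except on an event of the probability displayed in \eqref{eq:heis-scalar-burnin-tail}. This reduction to a fixed-length window is what makes the estimate uniform in \(t\). It also keeps the time horizon at most \(r^3\), as Lemma~\ref{lem:heis-downward-crossing} requires: indeed \(t_*=O(r\log r\,(r+\log r))=O(r^2\log r)\le r^3\) for large \(r\), since \(h\le\varepsilon r\).

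\textbf{Hitting stage.} Apply Lemma~\ref{lem:heis-support-growth} with \(\alpha=\alpha_1\in(0,\alpha_*)\), and split \([0,t_*/2]\) into \(K\) consecutive blocks of length \(B_{p,\alpha_1}r\log r\). The bound \eqref{eq:heis-support-hit-block} is uniform over starting states. By the Markov property at the start of each block, the probability of never reaching \(\alpha_1 r\) in all \(K\) blocks is therefore at most \(2^{-K}\). Choosing
\[
K=\left\lceil\frac{h\log p+(M+1)\log r}{\log 2}\right\rceil
\]
makes this at most \(p^{-h}r^{-M-1}\). This requires \(t_*/2\ge K B_{p,\alpha_1}r\log r\), which holds once \(C_{p,\varepsilon,M}\) is large. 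This step is exactly where the form \(t_*\asymp r\log r\,(h+M\log r)\) comes from.

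\textbf{No-return stage.} Let \(\tau=\tau_{\alpha_1}\le t_*/2\) on the good event. Since \(S\) moves by at most one per step, \(S_\tau=\lceil\alpha_1 r\rceil\ge\alpha_1 r\). Apply the strong Markov property at \(\tau\) together with Lemma~\ref{lem:heis-downward-crossing} with \(T=t_*\le r^3\). The probability that \(S\) drops below \(\alpha_0 r\) at some time in \([\tau,t_*]\) is then at most
\[
t_*\exp\{-(J_p(\alpha_0,\alpha_1)+o(1))r\}.
\]
Because \(\log t_*=O(\log r)=o(r)\), the prefactor \(t_*\) is absorbed into the \(o(1)\) term. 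If \(\tau\le t_*\) and no such drop occurs, then \(S_{t_*}\ge\alpha_0 r\). Adding the two failure probabilities gives \eqref{eq:heis-scalar-burnin-tail}.

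I expect the main obstacle to be bookkeeping rather than any new idea. The constants must be chosen in the right order: \(\alpha_1\) first, then \(B_{p,\alpha_1}\), then \(C_{p,\varepsilon,M}\). One must also check that the \(o(1)\) in Lemma~\ref{lem:heis-downward-crossing} depends only on \(p,\alpha_0,\alpha_1\), so that absorbing \(\log t_*/r\) keeps the bound uniform in \(h\le\varepsilon r\).
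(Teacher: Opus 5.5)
Your proposal is correct and follows the paper's proof essentially step for step. You reduce to a window of length \(t_*\) via the Markov property, iterate the uniform block bound of Lemma~\ref{lem:heis-support-growth} with \(\alpha=\alpha_1\) to get \(\Prob(\tau_{\alpha_1}>t_*)\le p^{-h}r^{-M-1}\), and then apply the strong Markov property at \(\tau_{\alpha_1}\) together with Lemma~\ref{lem:heis-downward-crossing} with \(T=t_*\le r^3\), absorbing the prefactor \(t_*\) into the \(o(1)\) in the exponent (confining the hitting stage to \([0,t_*/2]\) and choosing the number of blocks explicitly are only cosmetic differences).
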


\begin{proof}
It is enough to prove the estimate at \(t=t_*\), because the bound is uniform
in the starting state and later times can be handled by conditioning on the
state at time \(t-t_*\).  Let \(S_t=r-Z_t\), and set
\[
    \tau_1=\inf\{t:S_t\ge\alpha_1r\}.
\]
By Lemma~\ref{lem:heis-support-growth}, with \(\alpha=\alpha_1\), there is a
block length \(B=B_{p,\alpha_1}r\log r\) such that the probability of not
hitting \(\alpha_1r\) during one block is at most \(1/2\), uniformly in the
starting state.  Iterating over
\[
    L=\left\lfloor\frac{t_*}{B}\right\rfloor
\]
blocks gives \(\Prob(\tau_1>t_*)\le2^{-L}\).  Choosing
\(C_{p,\varepsilon,M}\) large enough gives 
\begin{equation} \label{IneqPart1Downcrossing2}
\Prob(\tau_1>t_*) \le p^{-h}r^{-M-1}.
\end{equation}

We now consider the event \(\{\tau_1\le t_*\}\). On this event, we have \(\{Z_{t_*}>\beta_0r\} = \{S_{t_*}<\alpha_0r\}\).  Since \(h\le\varepsilon r\),
\[
    t_*=O_{p,\varepsilon,M}(r^2\log r)\le r^3
\]
for all large \(r\).  By the strong Markov property at \(\tau_1\) and
Lemma~\ref{lem:heis-downward-crossing},
\[
\begin{aligned}
    \Prob(S_{t_*}<\alpha_0r,\ \tau_1\le t_*)
    &\le
    \sup_{s\ge\alpha_1r}
    \Prob_s\left(\inf_{0\le u\le t_*}S_u<\alpha_0r\right) \\
    &\le
    t_*\exp\{-(J_p(\alpha_0,\alpha_1)+o(1))r\}.
\end{aligned}
\]
Since \(t_*\le r^3\), the factor \(t_*\) is absorbed into the \(o(1)r\) term
in the exponent.  This gives the second term in \eqref{eq:heis-scalar-burnin-tail}; combining with Inequality \eqref{IneqPart1Downcrossing2} completes the proof.
\end{proof}

We conclude with the burn-in estimate:

\begin{lemma}\label{lem:heis-burnin}
For every fixed \(M<\infty\) there is a constant
\(C_{p,\varepsilon,M}<\infty\) such that the following holds.  If
\(h\le\varepsilon r\) and
\begin{equation}\label{eq:heis-tstar}
    t_*=C_{p,\varepsilon,M}r\log r\,(h+M\log r),
\end{equation}
then, uniformly in \(x\in\Omega\) and all \(t\ge t_*\),
\begin{equation}\label{eq:heis-burnin-one-time}
    \Prob_x(X_t\notin\Good_{\mathrm H})\le r^{-M}.
\end{equation}
Consequently, if \(L\le r^3\) and \(M=10\), then uniformly in \(x\) and
\(s\ge t_*\),
\begin{equation}\label{eq:heis-sojourn}
    \Prob_x(\exists\,u\in\{0,1,\ldots,L\}:X_{s+u}\notin\Good_{\mathrm H})=o(1).
\end{equation}
\end{lemma}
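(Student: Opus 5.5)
The plan mirrors the proof of Lemma~\ref{lem:tr-burnin}: reduce to one-column projections, apply the scalar estimate Lemma~\ref{lem:heis-scalar-burnin} to each, and take a union bound over the $p^h-1$ nonzero functionals.

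Fix $0\ne\xi\in W^*$ and consider the projected process
\[
    \Pi_\xi(X_t)=(\xi(v_1(t)),\ldots,\xi(v_r(t)))\in\Fp^r .
\]
Since $X_t\in\Omega$, Lemma~\ref{lem:heis-horizontal-generation} shows the horizontal vectors span $W$, so $\Pi_\xi(X_t)\neq 0$. By \eqref{eq:heis-horizontal-update}, a move with donor $j$, recipient $i$ and exponent $a$ replaces $\xi(v_i)$ by $\xi(v_i)+a\,\xi(v_j)$. Under $P$ the triple $(i,j,a)$ is uniform and the side (left or right) is irrelevant to the horizontal update. Hence $\Pi_\xi(X_t)$ is exactly the one-column $p$-ary walk $Y_t$ of Lemma~\ref{lem:heis-scalar-burnin}, and $N_\xi(X_t)$ is the corresponding count $Z_t$. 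If the lemma is meant for the half-lazy chain $Q$, the projection is the lazy scalar walk. I would handle that by running the scalar argument for the lazy birth-death chain: Lemmas~\ref{lem:heis-support-growth} and \ref{lem:heis-downward-crossing} only use the ratios $\rho_s$ and lower bounds on $B_s$, so laziness changes constants by a factor of $2$. Alternatively, one can condition on the number of non-lazy steps, which is at least $t/4$ with overwhelming probability.

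Apply Lemma~\ref{lem:heis-scalar-burnin} with $M+1$ in place of $M$ and the same $t_*$ form \eqref{eq:heis-tstar}, after enlarging the constant. For every $t\ge t_*$ and every starting point this gives
\[
    \Prob_x(N_\xi(X_t)>\beta_0 r)\le p^{-h}r^{-M-2}+\exp\{-(J_p(\alpha_0,\alpha_1)+o(1))r\}.
\]
A union bound over the fewer than $p^h$ nonzero $\xi$ then bounds $\Prob_x(X_t\notin\Good_{\mathrm H})$ by
\[
    r^{-M-2}+\exp\{h\log p-(J_p(\alpha_0,\alpha_1)+o(1))r\}.
\]

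The key step is controlling the second term, and this is exactly what the choice \eqref{eq:heis-alpha1-choice} is designed for. Since $h\le\varepsilon r$, we have $h\log p-J_p(\alpha_0,\alpha_1)r\le-3\eta_0 r$. So for large $r$ the second term is at most $e^{-2\eta_0 r}\le r^{-M-2}$, and the total is at most $r^{-M}$, which proves \eqref{eq:heis-burnin-one-time}.

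The sojourn bound \eqref{eq:heis-sojourn} then follows by a union bound over the $L+1\le r^3+1$ times $s,\ldots,s+L$, all of which are $\ge t_*$. With $M=10$ this gives at most $(r^3+1)r^{-10}=o(1)$.

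The only delicate point is bookkeeping. The $o(1)$ in the exponent of Lemma~\ref{lem:heis-scalar-burnin} depends only on $p,\alpha_0,\alpha_1$, so it is uniform in $\xi$ and $h$, and it is absorbed by the $3\eta_0$ margin once $r$ is large.
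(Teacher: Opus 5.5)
Your proposal is correct and follows the paper's proof essentially line for line. You project onto each nonzero $\xi$ to get the one-column $p$-ary walk, invoke Lemma~\ref{lem:heis-scalar-burnin}, take a union bound over the fewer than $p^h$ functionals using the margin $J_p(\alpha_0,\alpha_1)\ge\varepsilon\log p+3\eta_0$ from \eqref{eq:heis-alpha1-choice}, and finish with a union bound over the $L+1$ times. Your shift from $M$ to $M+1$ and the laziness remark are harmless extras: the paper simply uses $r^{-M-1}+e^{-2\eta_0 r+o(r)}\le r^{-M}$, and the lemma concerns the non-lazy kernel $P$ that Theorem~\ref{thm:abstract-good-set} uses for burn-in.
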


\begin{proof}
For each nonzero \(\xi\in W^*\), the projected process
\[
    \Pi_\xi(X_t)=(\xi(v_1(t)),\ldots,\xi(v_r(t)))
\]
is exactly the one-column \(p\)-ary transvection walk on
\(\Fp^r\setminus\{0\}\), because the horizontal update is
\(v_i\leftarrow v_i+a v_j\).  Since \(X_t\in\Omega\), the vector
\(\Pi_\xi(X_t)\) is never identically zero.

By Lemma~\ref{lem:heis-scalar-burnin}, for every fixed \(\xi\ne0\),
\[
    \Prob_x(N_\xi(X_t)>\beta_0r)
    \le p^{-h}r^{-M-1}+\exp\{-(J_p(\alpha_0,\alpha_1)+o(1))r\},
    \qquad t\ge t_*.
\]
Taking a union bound over at most \(p^h\) nonzero functionals $\xi$ gives
\[
\begin{aligned}
    \Prob_x(X_t\notin\Good_{\mathrm H})
    &\le r^{-M-1}+p^h\exp\{-(J_p(\alpha_0,\alpha_1)+o(1))r\}\\
    &\le r^{-M-1}+\exp\{-2\eta_0r+o(r)\}
    \le r^{-M},
\end{aligned}
\]
where we used \(h\le\varepsilon r\) and \eqref{eq:heis-alpha1-choice}.  This
proves \eqref{eq:heis-burnin-one-time}.  The sojourn estimate in Inequality \eqref{eq:heis-sojourn} follows by
applying \eqref{eq:heis-burnin-one-time} at all times
\(s,s+1,\ldots,s+L\) and using \(L\le r^3\) with \(M=10\).
\end{proof}

\subsection{The Heisenberg fibre spectral estimate}\label{subsec:heis-fibre-spectrum}
In this section, we prove the analogue of Proposition~\ref{prop:tr-fibre-gap}. The main difference between the PA-PRA and the transvection walk is that we are no longer dealing with only one-dimensional representations. We begin by collecting the notation and necessary facts from the already-understood representation theory of the Heisenberg group, following \cite{Misaghian2010} very closely. None of the representation-theoretic results in this section are new.

Define the additive character
\[
    \psi(t)=e^{2\pi i t/p},\qquad t\in\mathbb F_p,
\]
where we identify \(\mathbb F_p\) with \(\{0,1,\ldots,p-1\}\) in the
exponent.

\begin{proposition}[Heisenberg representation facts]\label{prop:heis-reps}
Let \(\Heis=W\times\Fp\) be the group in \eqref{eq:heis-law}, with
\(\dim W=2m\).  For \(\xi\in W^*\), define
\[
    \chi_\xi(v,z)=\psi(\xi(v)).
\]
The following facts hold.
\begin{enumerate}[label=\textup{(\roman*)}, leftmargin=*]
    \item The functions \(\chi_\xi\), \(\xi\in W^*\), are exactly the
    irreducible complex representations of \(\Heis\) on which the center
    \(\{0\}\times\Fp\) acts trivially.
    \item For each \(\lambda\in\Fp^\times\), there is an irreducible unitary
    representation \(\rho_\lambda\) of \(\Heis\), of dimension \(p^m\), such
    that
    \[
        \rho_\lambda(0,z)=\psi(\lambda z)I,\qquad z\in\Fp.
    \]
    \item If \(\sigma\) is any irreducible complex representation of \(\Heis\)
    on which the center does not act trivially, then there is a unique
    \(\lambda\in\Fp^\times\) such that
    \[
        \sigma(0,z)=\psi(\lambda z)I,\qquad z\in\Fp.
    \]
    For this \(\lambda\), there is a unitary map \(U\) from the representation
    space of \(\sigma\) to that of \(\rho_\lambda\) such that
    \[
        U\sigma(g)U^{-1}=\rho_\lambda(g),\qquad g\in\Heis.
    \]
\end{enumerate}
Moreover, if \(g=(v,z)\), \(g'=(w,t)\), and \(\lambda\in\Fp^\times\), then
\begin{equation}\label{eq:heis-projective-commutation}
    \rho_\lambda(g)\rho_\lambda(g')
    =
    \psi(\lambda\omega(v,w))\rho_\lambda(g')\rho_\lambda(g).
\end{equation}
\end{proposition}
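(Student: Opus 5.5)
The plan is the standard Schrödinger-model construction together with a counting argument via the sum of squares of dimensions. Part (i) is elementary. A one-dimensional representation is a homomorphism to \(\C^\times\), so it factors through the abelianization. Since the commutators \((0,\omega(v,w))\) generate the center, the abelianization is \(W\). Hence the one-dimensional representations are exactly the characters \(\psi(\xi(v))\), \(\xi\in W^*\). Conversely, an irreducible representation on which the center acts trivially factors through \(\Heis/Z\cong W\), which is abelian, so it is one-dimensional. This gives \(p^{2m}\) such representations.

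For (ii), I would fix a symplectic basis and write \(W=X\oplus Y\) with \(X=\Span(b_1,b_3,\ldots)\) and \(Y=\Span(b_2,b_4,\ldots)\), both Lagrangian. The representation space is \(\C^{X}\), of dimension \(p^m\). For \(g=(x+y,z)\) I define
\[
(\rho_\lambda(g)f)(u)=\psi\bigl(\lambda(z+\omega(u,y)+\tfrac12\omega(x,y))\bigr)f(u+x).
\]
The first check is that this is a homomorphism using \eqref{eq:heis-law}; this is routine bookkeeping, and the factor \(\tfrac12\) is available because \(p\) is odd. Unitarity is clear. The central character is \(\psi(\lambda z)I\) by construction. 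For irreducibility, I would show that any operator commuting with \(\rho_\lambda(0,y)\) for all \(y\in Y\) is diagonal. This holds because the characters \(u\mapsto\psi(\lambda\omega(u,y))\) separate the points of \(X\), since \(\omega\) pairs \(X\) and \(Y\) nondegenerately. Commuting with the translations \(\rho_\lambda(x,0)\) then forces the operator to be a scalar. By Schur's lemma the representation is irreducible. Alternatively, one can compute the character: it vanishes off the center and has norm one.

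For (iii), Schur's lemma shows that the center acts by a scalar character \(\psi(\lambda z)\), with \(\lambda\ne0\) by hypothesis. Uniqueness of \(\lambda\) is immediate, since distinct \(\lambda\) give distinct central characters. For existence of the intertwiner, I would count. The representations \(\rho_\lambda\) are pairwise inequivalent because they have distinct central characters. Combining them with (i),
\[
p^{2m}\cdot1+(p-1)\,p^{2m}=p^{2m+1}=|\Heis|.
\]
Since the sum of the squared dimensions of all irreducibles equals the group order, these are all the irreducibles. So \(\sigma\) is equivalent to \(\rho_\lambda\). Both representations are unitary, so the intertwiner can be taken unitary by the standard polar-decomposition argument.

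Finally, \eqref{eq:heis-projective-commutation} follows from the group law. We have \(gg'=g'g\,[g'^{-1},g^{-1}]\) up to ordering, and every commutator is central, equal to \((0,\omega(v,w))\) with the appropriate sign. From \(gg'=(v+w,\,z+t+\tfrac12\omega(v,w))\) and \(g'g=(v+w,\,z+t-\tfrac12\omega(v,w))\), we get \(gg'=g'g\cdot(0,\omega(v,w))\). Applying \(\rho_\lambda\) and using (ii) gives the stated identity.

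The main obstacle is the irreducibility and homomorphism verification in (ii). Everything else is formal once the counting identity is in hand.
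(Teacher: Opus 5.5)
Your proof is correct. For (i) and for the commutation identity \eqref{eq:heis-projective-commutation} you argue exactly as the paper does: factor through \(\Heis/Z(\Heis)\cong W\), and use \(gg'=(0,\omega(v,w))\,g'g\). For (ii) and (iii) you take a more self-contained route.

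\emph{The paper's route.} It relies on Misaghian throughout these two parts. It realizes \(\rho_\lambda\) as the space of functions \(f\) on \(\Heis\) with \(f(kh)=\eta_\lambda(k)f(h)\) for \(k\in A\times\Fp\), acting by right translation. It cites his Theorem~3 for irreducibility and the dimension \(p^m\), makes the representation unitary by averaging, and cites his Stone--von Neumann theorem and Corollary~10 for uniqueness and exhaustiveness.

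\emph{Your route.} Your Schr\"odinger model on \(\C^X\) is an explicit coordinate version of the same induced representation, with the two Lagrangians in swapped roles. You verify each property directly.
\begin{itemize}
\item The cocycle check works out. For \(g_i=(x_i+y_i,z_i)\), both \(\rho_\lambda(g_1)\rho_\lambda(g_2)f(u)\) and \(\rho_\lambda(g_1g_2)f(u)\) equal \(\psi\bigl(\lambda(z_1+z_2+\omega(u,y_1+y_2)+\omega(x_1,y_2)+\tfrac12\omega(x_1,y_1)+\tfrac12\omega(x_2,y_2))\bigr)f(u+x_1+x_2)\).
\item Unitarity is immediate, with no averaging needed.
\item The commutant argument gives irreducibility.
\end{itemize}

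\emph{What each buys.} Your main gain is in (iii). The identity \(p^{2m}+(p-1)p^{2m}=p^{2m+1}=|\Heis|\) gives exhaustiveness and Stone--von Neumann uniqueness in one stroke. Nothing beyond Schur's lemma and the sum-of-squares formula is needed. The paper's version is shorter on the page. However, it depends on the cited results and on translating Misaghian's normalization \(\langle v,w\rangle=\tfrac12\omega(v,w)\).

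\emph{Two small corrections.}
\begin{itemize}
\item The multiplication operators in your irreducibility argument are \(\rho_\lambda(y,0)\) for \(y\in Y\), not \(\rho_\lambda(0,y)\). In the paper's \((v,z)\) notation the latter would be a central element.
\item In (iii), \(\sigma\) is not assumed unitary. You should first equip its space with a \(\sigma\)-invariant inner product, by averaging, before taking the polar part of the intertwiner, as the paper does.
\end{itemize}
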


\begin{proof}
We use the representation theory of the finite Heisenberg group from
\cite{Misaghian2010}.  Misaghian works over a finite field $F$ of order
$q$, with $\dim_F W=2n$.  Here $F=\Fp$, $q=p$, and $n=m$.  His
multiplication rule is written using a symplectic form
$\langle\cdot,\cdot\rangle$; in our normalization this corresponds to
\[
    \langle v,w\rangle=\frac12\omega(v,w).
\]
Since $p$ is odd, this is again a nondegenerate alternating form, so his
results apply to the group law \eqref{eq:heis-law}.

We first discuss the representations on which the center acts trivially.  Any
irreducible representation with trivial action of $Z(\Heis)=\{0\}\times\Fp$
factors through
\[
    \Heis/Z(\Heis)\cong W.
\]
This quotient is abelian, so all of its irreducible complex representations are
one-dimensional characters.  The characters of the additive group $W$ are
exactly
\[
    v\mapsto \psi(\xi(v)),\qquad \xi\in W^*,
\]
and pulling these characters back along $(v,z)\mapsto v$ gives precisely the
functions $\chi_\xi$ in the statement.  This proves \textup{(i)}; this is also
recorded in \cite[Lemmas~5--6, Theorem~2, and Corollary~5,
pp.~164--166]{Misaghian2010}.

We now explain how \cite[Theorem~3]{Misaghian2010} gives the representations
in \textup{(ii)}.  Choose subspaces $A,B\le W$ such that
\[
    W=A\oplus B,\qquad \dim A=\dim B=m,
\]
and such that $\omega$ vanishes on $A\times A$ and on $B\times B$.  Such a
choice exists: after choosing a symplectic basis
$b_1,\ldots,b_{2m}$ as above, one may take
\[
    A=\Span(b_1,b_3,\ldots,b_{2m-1}),\qquad
    B=\Span(b_2,b_4,\ldots,b_{2m}).
\]
Set
\[
    K=A\times\Fp\le \Heis.
\]
The condition that $\omega$ vanishes on $A\times A$ implies that $K$ is a
subgroup of $\Heis$.  Fix $\lambda\in\Fp^\times$ and define a character of $K$ by
\[
    \eta_\lambda(a,z)=\psi(\lambda z),\qquad (a,z)\in K.
\]
Misaghian's space $C(H,K)$ is, in this notation,
\[
    E_\lambda
    =\{f:\Heis\to\C:
        f(kh)=\eta_\lambda(k)f(h)
        \text{ for all } k\in K,\ h\in \Heis\}.
\]
Thus the functions $f$ in Misaghian's formula are vectors in the representation
space.  For each fixed group element $g\in\Heis$, Misaghian defines the linear
operator
\[
    (\rho_\lambda(g)f)(h)=f(hg),\qquad f\in E_\lambda,\ h\in\Heis.
\]
Theorem~3 of \cite{Misaghian2010} says that this gives an irreducible
representation of degree $q^n=p^m$.

We make the dimension and the associated matrices explicit.  For each $b\in B$,
write
\[
    s_b=(b,0)\in\Heis.
\]
If $x=(w,z)\in\Heis$ and $w=a+b$ with $a\in A$ and $b\in B$, then
\[
    x=\left(a,z-\frac12\omega(a,b)\right)s_b.
\]
This decomposition is unique, so the elements $s_b$, $b\in B$, are a set of
representatives for the left cosets $K\backslash \Heis$.  Since $|B|=p^m$, there
are $p^m$ such cosets.  Define functions $e_b\in E_\lambda$, indexed by
$b\in B$, by
\[
    e_b(k s_{b'})=\eta_\lambda(k)\one_{\{b=b'\}},\qquad
    k\in K,\ b'\in B.
\]
The uniqueness of the decomposition shows that these functions are well-defined,
and they form a basis of $E_\lambda$: every $f\in E_\lambda$ is determined by the
$p^m$ values $f(s_b)$, and
\[
    f=\sum_{b\in B} f(s_b)e_b.
\]
Relative to this basis, the operator $\rho_\lambda(g)$ is a $p^m\times p^m$
matrix.  Its entries are obtained by applying the operator to the basis vectors:
\[
    M_{b,b'}(g)=(\rho_\lambda(g)e_{b'})(s_b)=e_{b'}(s_b g),\qquad b,b'\in B.
\]
Thus Misaghian's formula, although written as a formula for every vector $f$,
is exactly a formula for a linear operator, and a choice of the basis above turns
that operator into a matrix.

Now take a central element $c=(0,z)\in Z(\Heis)$.  Since $c\in K$ and $c$ is
central, for every $f\in E_\lambda$ and every $h\in\Heis$ we have
\[
    (\rho_\lambda(c)f)(h)
    =f(hc)
    =f(ch)
    =\eta_\lambda(c)f(h)
    =\psi(\lambda z)f(h).
\]
Therefore $\rho_\lambda(0,z)$ sends every vector of $E_\lambda$ to
$\psi(\lambda z)$ times itself.  In the basis $\{e_b:b\in B\}$, and hence in any
basis, this linear operator is
\[
    \rho_\lambda(0,z)=\psi(\lambda z)I.
\]
This is the central-character computation recorded in
\cite[Corollary~8, p.~169]{Misaghian2010}.  Since $\Heis$ is finite, any
finite-dimensional complex representation of $\Heis$ can be made unitary by
averaging an arbitrary inner product over the group.  Hence the representations
constructed above may be taken to be unitary, proving \textup{(ii)}.

It remains to explain the uniqueness statement in \textup{(iii)}.  If $\sigma$ is
irreducible, then Schur's lemma implies that every central element $(0,z)$ acts
by a scalar.  These scalars form an additive character of $\Fp$.  If the center
does not act trivially, this character is nontrivial, and the nontrivial additive
characters of $\Fp$ are exactly
\[
    z\mapsto \psi(\lambda z),\qquad \lambda\in\Fp^\times,
\]
with $\lambda$ uniquely determined.  Misaghian's Stone--von Neumann theorem,
\cite[Theorem~1, p.~162]{Misaghian2010}, says that for each such nontrivial
central character there is only one irreducible representation up to equivalence.
Thus $\sigma$ is equivalent to the representation $\rho_\lambda$ constructed
above.  Choosing invariant inner products on the two representation spaces, the
intertwining map can be chosen unitary.  Finally,
\cite[Corollary~10, p.~170]{Misaghian2010} says that the representations in
\textup{(i)} and \textup{(ii)} exhaust all irreducible representations of $\Heis$.

It remains only to check \eqref{eq:heis-projective-commutation}.  By
\eqref{eq:heis-law},
\[
    gg'=(v+w,z+t+\tfrac12\omega(v,w)),
    \qquad
    g'g=(v+w,z+t-\tfrac12\omega(v,w)).
\]
Hence
\[
    gg'=(0,\omega(v,w))g'g.
\]
Applying $\rho_\lambda$, and using
$\rho_\lambda(0,s)=\psi(\lambda s)I$, gives
\[
    \rho_\lambda(g)\rho_\lambda(g')
    =
    \psi(\lambda\omega(v,w))\rho_\lambda(g')\rho_\lambda(g),
\]
as claimed.
\end{proof}

Having described the representations, we now give a series of bounds that will help us show the power-averaging of PA-PRA helps. 

For a unitary matrix \(U\) with \(U^p=I\), define
\[
    P_U=\frac1p\sum_{a\in\Fp}U^a.
\]
This is the orthogonal projection onto the fixed space of \(U\) (see Lemma 2.2.2 of \cite{Sturmfels2008Algorithms}). The following lemma borrows calculations from \cite{Halmos1969TwoSubspaces}:

\begin{lemma}[Two projections]\label{lem:heis-two-projections}
Let \(U,V\) be unitary operators on a finite-dimensional complex Hilbert space,
with \(U^p=V^p=I\).  Suppose
\[
    UV=\zeta VU
\]
for some nontrivial \(p\)th root of unity \(\zeta\ne1\).  Then
\[
    \|P_UP_V\|_{\mathrm{op}}\le p^{-1/2}.
\]
\end{lemma}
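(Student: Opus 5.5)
The plan is to use the commutation relation to show that \(V\) permutes the eigenspaces of \(U\) cyclically. Any \(V\)-fixed vector then has to spread its mass evenly over all \(p\) eigenspaces of \(U\), so only a \(1/p\) fraction of its squared norm can lie in the fixed space of \(U\).

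\textbf{Step 1 (eigenspace decomposition).} Since \(U\) is unitary with \(U^p=I\), the space decomposes orthogonally as \(\bigoplus_{c} E_c\), where \(c\) ranges over the \(p\)th roots of unity and \(E_c=\ker(U-c)\). In particular \(P_U\) is the orthogonal projection onto \(E_1\), consistent with the description of \(P_U\) recalled just before the lemma.

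\textbf{Step 2 (\(V\) shifts the eigenspaces).} If \(Ux=cx\), then
\[
U(Vx)=\zeta VUx=\zeta c\,Vx,
\]
so \(V E_c\subseteq E_{\zeta c}\). Since \(V\) is invertible and there are finitely many eigenspaces, this is an isometric bijection \(E_c\to E_{\zeta c}\). Because \(p\) is prime and \(\zeta\ne1\), \(\zeta\) generates the full group of \(p\)th roots of unity. Hence the orbit \(E_1,E_\zeta,E_{\zeta^2},\ldots\) runs through all \(p\) eigenspaces.

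\textbf{Step 3 (the key estimate).} Take any \(x\) and set \(y=P_Vx\), so that \(Vy=y\) and \(\|y\|\le\|x\|\). Write \(y=\sum_c y_c\) with \(y_c\in E_c\). Applying \(V\) gives \(y=Vy=\sum_c Vy_c\), where \(Vy_c\in E_{\zeta c}\). By uniqueness of the orthogonal decomposition, \(Vy_c=y_{\zeta c}\) for every \(c\). Since \(V\) is unitary, \(\|y_c\|=\|y_{\zeta c}\|\), and by Step 2 all \(p\) components therefore have the same norm. Pythagoras then gives \(\|y_1\|^2=\|y\|^2/p\). Since \(P_Uy=y_1\), we conclude
\[
\|P_UP_Vx\|=\|y_1\|=p^{-1/2}\|y\|\le p^{-1/2}\|x\|,
\]
which is the claimed operator norm bound.

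The main point needing care is Step 2, specifically the claim that the orbit of \(E_1\) under \(V\) covers all \(p\) eigenspaces. This is exactly where primality of \(p\) and \(\zeta\ne1\) enter. Everything else is bookkeeping with orthogonal decompositions, and no Halmos-type angle computation is needed beyond this.
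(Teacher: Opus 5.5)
Your proof is correct, and it reaches the bound by a mirror-image route to the paper's. The paper works on the range of $P_U$. It expands $P_V=\frac1p\sum_{b\in\Fp}V^b$ and observes that $V^b$ carries the $U$-fixed space into the $\zeta^b$-eigenspace of $U$, so $P_UV^bP_U=0$ for $b\neq0$. This gives $P_UP_VP_U=\frac1pP_U$, and the paper concludes with $\|P_UP_V\|_{\mathrm{op}}^2=\|P_UP_VP_U\|_{\mathrm{op}}$. You work on the range of $P_V$ instead. Decomposing a $V$-fixed vector along the eigenspaces of $U$, you use that $V$ shifts these components cyclically and isometrically to get $\|P_Uy\|^2=\|y\|^2/p$ for every $y$ with $Vy=y$, which is the identity $P_VP_UP_V=\frac1pP_V$ in disguise. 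Both arguments rest on the same two inputs: the eigenspace shift $VE_c\subseteq E_{\zeta c}$, and primitivity of $\zeta$, which is where primality of $p$ enters (implicitly in the paper, through $\zeta^b\neq1$ for $b\neq0$). The paper's version is slightly shorter and needs no orbit argument. Yours avoids the $C^*$-identity, gives the exact value $\|P_UP_Vx\|=p^{-1/2}\|P_Vx\|$ for each $x$ (so the bound is attained whenever $P_V\neq0$), and makes the ``equidistribution over eigenspaces'' picture explicit. One small remark: the bijectivity claim in Step 2 is never used in Step 3, which needs only the inclusion $VE_c\subseteq E_{\zeta c}$, unitarity of $V$, uniqueness of the orthogonal decomposition, and the fact that $\zeta$ generates all $p$th roots of unity. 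So the terse justification of bijectivity there is harmless.
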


\begin{proof}
For \(b\ne0\), the relation \(UV^b=\zeta^bV^bU\) implies that \(V^b\) maps the
\(U\)-fixed space into a nontrivial eigenspace of \(U\).  Therefore
\[
    P_UV^bP_U=0,\qquad b\ne0.
\]
It follows that
\[
    P_UP_VP_U=\frac1p\sum_{b\in\Fp}P_UV^bP_U=\frac1pP_U.
\]
Since \(P_U\) and \(P_V\) are orthogonal projections,
\((P_UP_V)^*=P_VP_U\).  Hence
\[
    \|P_UP_V\|_{\mathrm{op}}^2
    =\|(P_UP_V)(P_UP_V)^*\|_{\mathrm{op}}
    =\|P_UP_VP_U\|_{\mathrm{op}}
    =p^{-1}\|P_U\|_{\mathrm{op}}
    \le p^{-1}.
\]
Taking square roots proves the claim.
\end{proof}

\begin{lemma}[Average of projections]\label{lem:heis-average-projections}
Let \(P_1,\ldots,P_N\) be orthogonal projections on a finite-dimensional
complex Hilbert space.  Suppose that for at least 
 \(\delta {N \choose 2}\) of the ordered pairs \((j,\ell)\in[N]^2\) one has
\[
    \|P_jP_\ell\|_{\mathrm{op}}\le\alpha.
\]
Then
\[
    \left\|\frac1N\sum_{j=1}^NP_j\right\|_{\mathrm{op}}
    \le 1-\frac\delta2(1-\alpha).
\]
\end{lemma}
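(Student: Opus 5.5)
Since $A:=\frac1N\sum_j P_j$ is a positive semidefinite self-adjoint operator, its norm is $\sup_{\|x\|=1}\langle Ax,x\rangle=\sup_{\|x\|=1}\frac1N\sum_j\|P_jx\|^2$. The plan is to bound this quadratic form pair by pair. I will use a sharp two-projection estimate, $\|P+Q\|_{\mathrm{op}}\le 1+\|PQ\|_{\mathrm{op}}$, and then average it over all pairs.

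\emph{Step 1 (two projections).} Let $P,Q$ be orthogonal projections with $\|PQ\|_{\mathrm{op}}\le\alpha$. Fix a unit vector $x$ and put $y=Px$, $z=Qx$, $s=\|y\|^2+\|z\|^2$.

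First, $s=\langle y+z,x\rangle\le\|y+z\|$, so $s^2\le\|y+z\|^2=s+2\,\mathrm{Re}\langle y,z\rangle$.

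Second, because $P$ and $Q$ are self-adjoint idempotents, $\langle y,z\rangle=\langle Px,Qx\rangle=\langle Q P(Px),Qx\rangle$. Hence $|\langle y,z\rangle|\le\|QP\|_{\mathrm{op}}\|y\|\|z\|\le\alpha\|y\|\|z\|$, where $\|QP\|_{\mathrm{op}}=\|(PQ)^*\|_{\mathrm{op}}=\|PQ\|_{\mathrm{op}}$.

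Combining the two, and using $2\|y\|\|z\|\le s$, gives $s^2\le s+\alpha s$. Therefore $\|Px\|^2+\|Qx\|^2\le 1+\alpha$. Without the hypothesis we still have the trivial bound $\|Px\|^2+\|Qx\|^2\le 2$.

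\emph{Step 2 (averaging over pairs).} Fix a unit $x$ and write $a_j=\|P_jx\|^2$. The quantity $\|P_jP_\ell\|_{\mathrm{op}}=\|P_\ell P_j\|_{\mathrm{op}}$ is symmetric in $(j,\ell)$. So the hypothesis is a condition on $\{j,\ell\}$, and I read it, as the normalization ${N\choose 2}$ indicates, as saying that at least $\delta{N\choose 2}$ pairs $\{j,\ell\}$ with $j\ne\ell$ are good.

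Summing $a_j+a_\ell$ over all ${N\choose2}$ pairs counts each $a_j$ exactly $N-1$ times. Applying Step 1 to the good pairs and the trivial bound to the rest gives
\[
(N-1)\sum_j a_j=\sum_{j<\ell}(a_j+a_\ell)\le 2{N\choose 2}-(1-\alpha)\,\delta{N\choose 2}.
\]
Dividing by $N(N-1)=2{N\choose2}$ yields
\[
\langle Ax,x\rangle=\frac1N\sum_j a_j\le 1-\frac\delta2(1-\alpha).
\]
Taking the supremum over unit $x$ proves the lemma.

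\emph{Main obstacle.} The only delicate point is Step 1. A cruder argument that bounds $\|Ax\|^2$ through cross terms $\langle P_jx,P_\ell x\rangle$ loses a constant factor, because it also has to control the vectors on which some $\|P_jx\|$ is small. Working directly with the quadratic form $\langle(P+Q)x,x\rangle$ and the self-improving inequality $s^2\le(1+\alpha)s$ avoids this and gives the exact constant $\frac12$.

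The constant is sharp. For $N=2$ with one good pair ($\delta=1$), one has $\|(P_1+P_2)/2\|_{\mathrm{op}}=(1+\|P_1P_2\|_{\mathrm{op}})/2$, which equals the bound. This also confirms that the pair count must be read per unordered pair.
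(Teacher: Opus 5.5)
Your proof is correct and is essentially the paper's argument: the same two-projection bound $\|P+Q\|_{\mathrm{op}}\le 1+\|PQ\|_{\mathrm{op}}$ (you get it from the quadratic-form inequality $s^2\le(1+\alpha)s$, the paper from the eigenvector relations $Pv=(\lambda-1)u$, $Qu=(\lambda-1)v$), followed by averaging over pairs. On the hypothesis you are more careful than the paper: its proof draws $(J,L)$ uniformly from $[N]^2$ and uses $\Prob(E)\ge\delta$, i.e.\ a $\delta$-fraction of all $N^2$ ordered pairs, which the stated count $\delta\binom N2$ does not give, and the literal statement is in fact false (take $P_1=P_2=P_3=P$ and $P_4=Q$ with nonzero $P,Q$ and $PQ=0$: the six good ordered pairs give $\delta=1$ and $\alpha=0$, yet the norm is $3/4>1/2$), whereas your unordered-pair version is true, is sharp on that example, and still yields $\delta=(1-\beta)^2$ in the paper's application, where the certified pairs (those with $\omega(v_j,v_\ell)\neq0$) are off-diagonal.
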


\begin{proof}
We first prove an estimate with only two projections.  Let \(P\) and \(Q\)
be orthogonal projections on a finite-dimensional complex Hilbert space, and
put \(s=\|PQ\|_{\operatorname{op}}\).  We claim that
\begin{equation}\label{eq:two-projection-average-bound}
    \lambda_{\max}\left(\frac{P+Q}{2}\right)
    \le \frac{1+s}{2}.
\end{equation}
To see this, let \(f\) be a unit eigenvector of \(P+Q\) with eigenvalue \(\lambda\).  If
\(\lambda\le1\), then \eqref{eq:two-projection-average-bound} is immediate.
Assume \(\lambda>1\).  Set \(u=Pf\) and \(v=Qf\).  Applying \(P\) and \(Q\) to
\((P+Q)f=\lambda f\) gives
\[
    Pv=(\lambda-1)u,
    \qquad
    Qu=(\lambda-1)v.
\]
Therefore
\[
    (\lambda-1)\|u\|=\|Pv\|\le s\|v\|,
    \qquad
    (\lambda-1)\|v\|=\|Qu\|\le s\|u\|,
\]
where the second inequality uses \(\|QP\|_{\operatorname{op}}=
\|(PQ)^*\|_{\operatorname{op}}=s\).  Since \(\lambda>1\), neither \(u\) nor
\(v\) is zero.  Multiplying the two inequalities gives \(\lambda-1\le s\),
and hence \eqref{eq:two-projection-average-bound}.

We now consider the general-$N$ case. Fix a unit vector \(f\).  Let \(J,L\) be independent uniform indices in
\([N]\), and let \(E\) be the event
\(\|P_JP_L\|_{\operatorname{op}}\le\alpha\).  Since \(\Prob(E)\ge\delta\),
\eqref{eq:two-projection-average-bound} and the trivial bound
\(\lambda_{\max}((P_J+P_L)/2)\le1\) give
\[
\begin{aligned}
    \left\langle\frac1N\sum_{j=1}^NP_j f,f\right\rangle
    &=\E\left[\frac{\langle P_Jf,f\rangle+\langle P_Lf,f\rangle}{2}\right]\\
    &\le \Prob(E^c)+\Prob(E)\frac{1+\alpha}{2}\\
    &\le 1-\frac\delta2(1-\alpha).
\end{aligned}
\]
Taking the supremum over unit vectors \(f\) proves the claim.

\end{proof}

We put these together to bound the spectral gaps of the fibre kernels:

\begin{proposition}\label{prop:heis-fibre-gap}
Fix \(\beta\in(1/p,1)\).  Let \(a_1,\ldots,a_N\in\Heis\), where
\(a_j=(v_j,z_j)\), and let \(\mu_W=N^{-1}\sum_j\delta_{v_j}\).  Suppose that \(\mu_W\) satisfies
\begin{equation}\label{eq:heis-hyperplane-balance-fibre}
    \mu_W(L)\le\beta
    \qquad\text{for every hyperplane }L\le W.
\end{equation}
Define
\[
\nu =
\frac1N\sum_{j=1}^N u_{\langle a_j\rangle}.
\]
Recall the definition \eqref{EqLeftRightDef}. Then the kernel $K=\frac12(R_\nu+L_\nu)$ has spectral gap at least
\begin{equation}\label{eq:heis-gamma-beta}
    \gamma_{p,\beta}
    =\min\left\{1-\beta,
    \frac{(1-\beta)^2}{2}(1-p^{-1/2})\right\}>0.
\end{equation}
\end{proposition}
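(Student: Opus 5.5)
We decompose $L^2(\Heis)$ with uniform measure into isotypic components under the two-sided regular representation. By Peter--Weyl, every $\sigma\in\Irr(\Heis)$ contributes a block of matrix coefficients $g\mapsto \langle \sigma(g)e,f\rangle$. On this block, $R_\nu$ acts as right multiplication by $\widehat\nu(\sigma)=\sum_g\nu(g)\sigma(g)$, and $L_\nu$ acts as left multiplication by the same operator (up to the usual transpose convention). Since each $u_{\langle a_j\rangle}$ is symmetric under inversion, $\widehat\nu(\sigma)$ is self-adjoint and $K$ is self-adjoint. Moreover
\[
\widehat\nu(\sigma)=\frac1N\sum_j P_{\sigma(a_j)},
\]
an average of orthogonal projections onto the fixed spaces of the $\sigma(a_j)$. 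The operator $\frac12(R_\nu+L_\nu)$ restricted to the $\sigma$-block is therefore $\frac12(X\mapsto X\widehat\nu(\sigma)+\widehat\nu(\sigma)X)$, and its eigenvalues are $\frac12(\lambda+\lambda')$ with $\lambda,\lambda'$ eigenvalues of $\widehat\nu(\sigma)\succeq 0$. All eigenvalues of $K$ thus lie in $[0,1]$. It suffices to show $\|\widehat\nu(\sigma)\|_{\mathrm{op}}\le 1-\gamma_{p,\beta}$ for every nontrivial $\sigma$, since then every nonconstant eigenvalue is at most $1-\gamma_{p,\beta}$.

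\emph{One-dimensional case.} Take $\sigma=\chi_\xi$ with $\xi\ne0$, using Proposition~\ref{prop:heis-reps}(i). Here $P_{\chi_\xi(a_j)}$ equals $1$ if $\xi(v_j)=0$ and $0$ otherwise, because $\chi_\xi(a_j)$ is a $p$th root of unity that is nontrivial iff $\xi(v_j)\ne0$. Hence $\widehat\nu(\chi_\xi)=\mu_W(\ker\xi)\le\beta$ by \eqref{eq:heis-hyperplane-balance-fibre}, which gives the gap $1-\beta$.

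\emph{Central-character case.} Take $\sigma\simeq\rho_\lambda$ with $\lambda\ne0$, using parts (ii)--(iii). By \eqref{eq:heis-projective-commutation}, $U=\rho_\lambda(a_j)$ and $V=\rho_\lambda(a_\ell)$ satisfy $UV=\psi(\lambda\omega(v_j,v_\ell))VU$, and $U^p=V^p=I$ by \eqref{eq:heis-powers}. Whenever $\omega(v_j,v_\ell)\ne0$, Lemma~\ref{lem:heis-two-projections} gives $\|P_UP_V\|\le p^{-1/2}$. We then count such pairs. For fixed $j$ with $v_j\ne0$, the map $w\mapsto\omega(v_j,w)$ is a nonzero functional by nondegeneracy, so $\{\ell:\omega(v_j,v_\ell)=0\}$ lies in a hyperplane and has $\mu_W$-mass at most $\beta$. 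Also $\mu_W(\{0\})\le\beta$, since $0$ lies in a hyperplane. Therefore, for $J,L$ independent uniform, $\Prob(\omega(v_J,v_L)\ne0)\ge(1-\beta)^2$. Lemma~\ref{lem:heis-average-projections}, read in its probabilistic form with $\delta=(1-\beta)^2$ (which is how its proof actually uses the hypothesis), yields
\[
\|\widehat\nu(\rho_\lambda)\|\le 1-\frac{(1-\beta)^2}{2}(1-p^{-1/2}).
\]
Taking the minimum of the two cases gives $\gamma_{p,\beta}$.

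The main obstacle is the bookkeeping in the first paragraph: verifying precisely that $R_\nu$ and $L_\nu$ act on each isotypic block by the same positive operator $\widehat\nu(\sigma)$ (or its transpose or conjugate, which has the same spectrum), so that the joint spectrum bound transfers to $K$. A secondary point is reconciling the ``$\delta\binom N2$ ordered pairs'' hypothesis of Lemma~\ref{lem:heis-average-projections} with the fraction $(1-\beta)^2$ of ordered pairs in $[N]^2$. Diagonal pairs never satisfy $\omega\ne0$, so they are harmless, and the lemma's proof only uses $\Prob(E)\ge\delta$.
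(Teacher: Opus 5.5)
Your proposal is correct and follows the paper's proof essentially step for step. It uses the same Peter--Weyl reduction to bounding $\|\widehat\nu(\sigma)\|_{\mathrm{op}}$ for nontrivial $\sigma$; your block eigenvalues $\frac12(\lambda+\lambda')$ with $\lambda,\lambda'\in[0,1]$ are a slightly finer form of the paper's bound $\|K|_{\mathcal H_\rho}\|_{2\to2}\le\|B_\rho\|_{\mathrm{op}}$. It also uses the same identity $\widehat\nu(\chi_\xi)=\mu_W(\ker\xi)$ and feeds the same $(1-\beta)^2$ pair count into Lemmas~\ref{lem:heis-two-projections} and~\ref{lem:heis-average-projections}. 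Your reading of Lemma~\ref{lem:heis-average-projections} in terms of the fraction of pairs in $[N]^2$ is exactly how the paper applies it, and it is the version that lemma's proof actually establishes.
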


\begin{proof}
We prove three estimates and then combine them. Define the functional $\widehat \nu$ by the formula:
\[
\widehat\nu(\rho)=\sum_{g\in\Heis}\nu(g)\rho(g).
\]
We use the notation for representations of $\Heis$ from the statement of Proposition \ref{prop:heis-reps} throughout the argument.

First, we control the one-dimensional representations.  Fix
\(0\ne\xi\in W^*\). Recall the following fact: if $x$ is a \(p\)th root of unity, then either $\sum_{j=1}^{p} x^{j} = p$ (if $x=1$) or   $\sum_{j=1}^{p} x^{j} = 0$ (otherwise). Using \eqref{eq:heis-powers} and then this fact, 
\[
\begin{aligned}
    \widehat\nu(\chi_\xi)
    &=\frac1N\sum_{j=1}^N\frac1p\sum_{s\in\Fp}\chi_\xi(a_j^s)\\
    &=\frac1N\sum_{j=1}^N\frac1p\sum_{s\in\Fp}\psi(s\xi(v_j))\\
    &=\mu_W(\ker \xi).
\end{aligned}
\]
Since \(\ker\xi\) is a hyperplane, the assumption in inequality \eqref{eq:heis-hyperplane-balance-fibre}
gives
\begin{equation} \label{IneqRepBound1}
    |\widehat\nu(\chi_\xi)|\le \beta .
\end{equation}

Second, we control the irreducible representations with nontrivial central
character.  Fix \(\lambda\in\Fp^\times\), and set
\[
    U_j=\rho_\lambda(a_j),\qquad
    P_j^{(\lambda)}=\frac1p\sum_{a\in\Fp}U_j^a,
    \qquad
    A_\lambda=\frac1N\sum_{j=1}^N P_j^{(\lambda)} .
\]
By \eqref{eq:heis-powers}, \(U_j^p=I\). By this identity and the fact that $U$ is unitary, we have that \(P_j^{(\lambda)}\) is the
orthogonal projection onto the space fixed by \(U_j\) (see Lemma 2.2.2 of \cite{Sturmfels2008Algorithms}). Furthermore,
\[
    \widehat\nu(\rho_\lambda)=A_\lambda.
\]

We now consider a pair $1 \leq j < \ell \leq N$. If \(\omega(v_j,v_\ell)\ne0\), then \eqref{eq:heis-projective-commutation}
gives
\[
    U_jU_\ell=\psi(\lambda\omega(v_j,v_\ell))U_\ell U_j,
\]
and the scalar \(\psi(\lambda\omega(v_j,v_\ell))\) is a nontrivial \(p\)th root
of unity. In this case, Lemma~\ref{lem:heis-two-projections} gives
\[
    \|P_j^{(\lambda)}P_\ell^{(\lambda)}\|_{\operatorname{op}}
    \le p^{-1/2}.
\]
We now show that this estimate applies to many ordered pairs.  Let
\(V,V' \sim \mu_W\) be independent, and define
\(q=\mu_W(\{0\})\).  Since \(\{0\}\) is contained in every hyperplane,
\(q\le\beta\). If \(v\ne0\), then the functional \(w\mapsto\omega(v,w)\) is nonzero by
nondegeneracy of \(\omega\).  Hence
\[
    v^\perp=\{w\in W:\omega(v,w)=0\}
\]
is a hyperplane of \(W\), and \eqref{eq:heis-hyperplane-balance-fibre}
gives \(\mu_W(v^\perp)\le\beta\).  Therefore
\[
\begin{aligned}
    \Prob(\omega(V,V')=0)
    &=q+\sum_{v\ne0}\mu_W(v)\mu_W(v^\perp)\\
    &\le q+(1-q)\beta
    \le \beta+(1-\beta)\beta
    =2\beta-\beta^2.
\end{aligned}
\]
Thus at least a proportion \((1-\beta)^2\) of ordered pairs satisfy
\(\omega(v_j,v_\ell)\ne0\).  Applying
Lemma~\ref{lem:heis-average-projections} with
\(\delta=(1-\beta)^2\) and \(\alpha=p^{-1/2}\), we get
\begin{equation} \label{IneqRepBound2}
    \|A_\lambda\|_{\operatorname{op}}
    \le1-\frac{(1-\beta)^2}{2}(1-p^{-1/2}).
\end{equation}

Third, we pass from the representation bounds \eqref{IneqRepBound1},
\eqref{IneqRepBound2} to bounds on the Markov chain.  Let
\(\Irr(\Heis)\) denote the following fixed choice of representatives:
\[
    \Irr(\Heis)
    =
    \{\chi_\xi:\xi\in W^*\}
    \cup
    \{\rho_\lambda:\lambda\in\Fp^\times\},
\]
where the representations are those in Proposition~\ref{prop:heis-reps}.
Note that, by Proposition~\ref{prop:heis-reps}, \(\Irr(\Heis)\) contains one representative from each unitary
change-of-basis class of irreducible representations of \(\Heis\).

For each representation \(\rho\in\Irr(\Heis)\), let \(V_\rho\) be its representation space and
write
\[
    B_\rho=\widehat\nu(\rho)=\sum_{g\in\Heis}\nu(g)\rho(g).
\]
The measure \(\nu\) is symmetric because each \(u_{\langle a_j\rangle}\) is
uniform on a subgroup, so \(B_\rho\) is self-adjoint.  Define
\[
    \mathcal H_\rho
    =
    \operatorname{span}\{x\mapsto \langle \rho(x)u,v\rangle:
      u,v\in V_\rho\}\subseteq L^2(\Heis).
\]
By Schur orthogonality
for irreducible representation coefficients (see \textit{e.g.} the discussion of the Peter-Weyl theorem in  \cite[Chapter~2]{Diaconis1988}), we have the orthogonal decomposition
\[
    L^2(\Heis)=\bigoplus_{\rho\in\Irr(\Heis)}\mathcal H_\rho.
\]
The
trivial block, corresponding to \(\chi_0\), is the space of constant
functions.

For \(u,v\in V_\rho\), write
\[
    f_{u,v}^{\rho}(x)=\langle \rho(x)u,v\rangle .
\]
Using the definitions of \(R_\nu,L_\nu\) in \eqref{EqLeftRightDef} and of
\(B_\rho\), we have
\[
\begin{aligned}
    R_\nu f_{u,v}^{\rho}(x)
    &=
    \sum_{g\in\Heis}\nu(g)\langle \rho(xg)u,v\rangle  \\
    &=
    \sum_{g\in\Heis}\nu(g)\langle \rho(x)\rho(g)u,v\rangle  \\
    &=
    \langle \rho(x)B_\rho u,v\rangle
    =
    f_{B_\rho u,v}^{\rho}(x),
\end{aligned}
\]
and similarly
\[
L_\nu f_{u,v}^{\rho}(x)   =f_{u,B_\rho^*v}^{\rho}(x).
\]
Thus \(R_\nu\) and \(L_\nu\) preserve \(\mathcal H_\rho\).  Under the
Peter-Weyl identification of \(\mathcal H_\rho\) with the corresponding
coefficient space, the preceding identities say that \(R_\nu\) acts by
\(B_\rho\) on the first coordinate and \(L_\nu\) acts by \(B_\rho^*\) on the
second coordinate.  Hence
\[
    \|R_\nu|_{\mathcal H_\rho}\|_{2\to2}
    =
    \|L_\nu|_{\mathcal H_\rho}\|_{2\to2}
    =
    \|B_\rho\|_{\operatorname{op}}.
\]
Taking averages, this implies
\[
    \left\|K|_{\mathcal H_\rho}\right\|_{2\to2}
    \le \|B_\rho\|_{\operatorname{op}}.
\]
For every nontrivial \(\rho\), the Inequalities \eqref{IneqRepBound1} and \eqref{IneqRepBound2} give
\[
    \|B_\rho\|_{\operatorname{op}}
    \le 1-\gamma_{p,\beta},
\]
where \(\gamma_{p,\beta}\) is defined in \eqref{eq:heis-gamma-beta}.  Since
\(K\) is self-adjoint on \(L^2(\Heis)\), this bound on the orthogonal
complement of the constants implies that the spectral gap of
\(K\) is at least \(\gamma_{p,\beta}\).

\end{proof}

\begin{lemma}\label{lem:heis-good-fibre-gap}
For all sufficiently large \(r\), the following holds.  Fix \(i\) and freeze the
coordinates \((g_j)_{j\ne i}\).  If the corresponding \(i\)-fibre intersects
\(\Good_{\mathrm H}\), then \(K_i\) in \eqref{eq:heis-fibre-kernel} has spectral
gap at least \(\gamma_{p,\beta_1}\), where \(\beta_1\) is defined in
\eqref{eq:heis-beta0-beta1}.
\end{lemma}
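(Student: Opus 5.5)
The plan is to reduce Lemma~\ref{lem:heis-good-fibre-gap} to Proposition~\ref{prop:heis-fibre-gap}, applied with \(N=r-1\), the frozen donors \(a_j=g_j\) for \(j\ne i\), and \(\beta=\beta_1\). By \eqref{eq:heis-fibre-kernel} and the identity \(K_i=\frac12(R_{\nu_i}+L_{\nu_i})\) with \(\nu_i=\frac1{r-1}\sum_{j\ne i}u_{\langle g_j\rangle}\), the kernel \(K_i\) is exactly the kernel \(K\) of Proposition~\ref{prop:heis-fibre-gap}. It therefore suffices to verify the hyperplane balance condition \eqref{eq:heis-hyperplane-balance-fibre} for the empirical measure \(\mu_W=\frac1{r-1}\sum_{j\ne i}\delta_{v_j}\) with \(\beta=\beta_1\). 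Note that \(\beta_1=\frac12(1+\beta_0)\in(1/p,1)\), since \(\beta_0\in(1/p,1)\) by \eqref{eq:heis-beta-choice}, so the proposition applies.

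To verify the balance condition, pick a point \(g=(g_1,\ldots,g_{i-1},x,g_{i+1},\ldots,g_r)\) of the fibre that lies in \(\Good_{\mathrm H}\). Every hyperplane \(L\le W\) has the form \(\ker\xi\) for some nonzero \(\xi\in W^*\). Membership of \(g\) in \(\Good_{\mathrm H}\) gives \(N_\xi(g)\le\beta_0 r\). Removing coordinate \(i\) can only decrease this count, so
\[
    \mu_W(L)=\frac{|\{j\ne i:\xi(v_j)=0\}|}{r-1}\le\frac{\beta_0 r}{r-1}.
\]
Since \(\beta_0<\beta_1\) strictly, we have \(\beta_0 r/(r-1)\le\beta_1\) once \(r\ge 1+\beta_0/(\beta_1-\beta_0)=1+2\beta_0/(1-\beta_0)\). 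This holds for all sufficiently large \(r\), depending only on \(p\) and \(\varepsilon\) through \(\beta_0\).

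Proposition~\ref{prop:heis-fibre-gap} then yields a spectral gap of at least \(\gamma_{p,\beta_1}\), which is the claim. There is no serious obstacle here. The only point needing care is the loss from \(r\) to \(r-1\) when the recipient is removed, and this is exactly why the lemma uses \(\beta_1\) rather than \(\beta_0\). The stated threshold on \(r\) handles that loss.
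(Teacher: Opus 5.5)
Your proposal is correct and is essentially the paper's proof: choose a point of the fibre in \(\Good_{\mathrm H}\), note that dropping coordinate \(i\) gives \(\mu_W(\ker\xi)\le\beta_0r/(r-1)\le\beta_1\) for large \(r\), and apply Proposition~\ref{prop:heis-fibre-gap} with \(N=r-1\) and \(\nu=\nu_i\). Your explicit threshold \(r\ge\beta_1/(\beta_1-\beta_0)\) and your check that \(\beta_1\in(1/p,1)\) are minor details that the paper leaves implicit.
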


\begin{proof}
Choose a point \(g\in\Good_{\mathrm H}\) in the fibre.  For every nonzero
\(\xi\in W^*\),
\[
    |\{j\ne i:\xi(v_j)=0\}|
    \le |\{j\in[r]:\xi(v_j)=0\}|
    \le\beta_0r.
\]
Thus, for all sufficiently large \(r\),
\[
    \frac1{r-1}|\{j\ne i:\xi(v_j)=0\}|\le\beta_1.
\]
This is precisely the condition
\eqref{eq:heis-hyperplane-balance-fibre} with \(\beta=\beta_1\).  Applying Proposition
\ref{prop:heis-fibre-gap} with this bound proves the claim.
\end{proof}

\subsection{Completing the proof of the upper bound}\label{subsec:heis-apply}

\begin{theorem}[Continuous-time Heisenberg mixing]\label{thm:heis-continuous-time}
Fix an odd prime \(p\) and \(\varepsilon\in(0,1)\).  There is a constant
\(C_{p,\varepsilon}<\infty\) such that, for all sufficiently large \(r\) and all
\(h=2m\ge2\) satisfying \(h\le\varepsilon r\),
\[
    \max_{x\in\Omega}
    \left\|\delta_x e^{C_{p,\varepsilon}r(h+\log r)\log r(P-I)}-\pi\right\|_{\TV}
    \le\frac14.
\]
\end{theorem}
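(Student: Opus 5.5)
We apply Theorem~\ref{thm:abstract-good-set} with the choices
\[
    S=\Heis,\qquad N=r,\qquad \Omega=V_r(\Heis),\qquad G=\Good_{\mathrm H},
\]
mirroring the proof of Theorem~\ref{thm:tr-linear-lazy}. The kernel \(P\) of \eqref{eq:heis-kernel-P} is a coordinate-replacement kernel of the form \eqref{eq:abstract-fibre-decomposition}. To see this, condition on the recipient \(i\), which is uniform over \([r]\). The induced move on coordinate \(i\) is then \(K_i\) from \eqref{eq:heis-fibre-kernel}, and \(K_i\) depends only on \((g_j)_{j\ne i}\). Each \(K_i\) is reversible with respect to the uniform measure on \(\Heis\), since \(\nu_i\) is symmetric. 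Also \(P\) is \(\mu\)-reversible and \(\Omega\) is invariant.

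\emph{Local LSI (condition (i)).} By Lemma~\ref{lem:heis-good-fibre-gap}, every \(i\)-fibre meeting \(\Good_{\mathrm H}\) has spectral gap at least \(\gamma_{p,\beta_1}>0\), a constant depending only on \(p,\varepsilon\). Proposition~\ref{prop:abstract-fibre-gap-lsi} then gives condition (i) with
\[
    A=C\,\frac{r\log|\Heis|}{\gamma_{p,\beta_1}}=C\,\frac{r(h+1)\log p}{\gamma_{p,\beta_1}}=O_{p,\varepsilon}(rh).
\]
Since \(|\Omega|\le p^{r(h+1)}\), we have \(\log|\Omega|\le r(h+1)\log p\). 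Hence
\[
    t_{\mathrm{conf}}=2A\log\bigl(e+\log|\Omega|\bigr)=O_{p,\varepsilon}(rh\log r).
\]
The first term of \(R\) is \(e^{-2\log(e+\log|\Omega|)}\log|\Omega|\le 1/\log|\Omega|=o(1)\). For the second term, Lemma~\ref{lem:heis-good-stationary} gives \(\pi(\Good_{\mathrm H}^c)\le e^{-c_{p,\varepsilon}r}\), so
\[
    A\,\frac{\pi(G^c)}{\pi(G)}=O(r^2)\,e^{-cr}=o(1),
\]
and also \(\pi(G^c)=o(1)\).

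\emph{Burn-in (condition (ii)).} Take \(L=r^3\) and \(M=10\) in Lemma~\ref{lem:heis-burnin}. This gives
\[
    t_*=C_{p,\varepsilon,10}\,r\log r\,(h+10\log r)=O\bigl(r(h+\log r)\log r\bigr)
\]
and \(\eta=o(1)\) by \eqref{eq:heis-sojourn}. Since \(h\le\varepsilon r\), we have \(t_{\mathrm{conf}}=O(r^2\log r)=o(r^3)\). A Poisson tail bound then gives \(\zeta=\Prob(\operatorname{Poi}(t_{\mathrm{conf}})>r^3)=o(1)\). Finally \(t_*\to\infty\), so \(\Prob(\operatorname{Poi}(2t_*)<t_*)=o(1)\).

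\emph{Conclusion.} With these inputs, condition \eqref{eq:abstract-continuous-condition} holds for large \(r\). Theorem~\ref{thm:abstract-good-set} then gives mixing of the continuous-time chain within time
\[
    2t_*+t_{\mathrm{conf}}=O_{p,\varepsilon}\bigl(r(h+\log r)\log r\bigr).
\]
Choosing \(C_{p,\varepsilon}\) larger than the implied constant yields the stated bound, because running the continuous-time chain longer only reduces the distance to \(\pi\). The only mildly delicate points are checking that \(\log|\Heis|=(h+1)\log p\) enters \(A\) only linearly, and that the exponents of the various tails are uniform in \(h\le\varepsilon r\). Both are already handled by the stated lemmas. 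I expect no genuine obstacle here: the substantive work is in Lemmas~\ref{lem:heis-burnin} and \ref{lem:heis-good-fibre-gap}.
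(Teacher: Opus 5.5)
Your proposal is correct and follows the paper's proof essentially step for step. You use the same instantiation of Theorem~\ref{thm:abstract-good-set} with $G=\Good_{\mathrm H}$, the same $A=O_{p,\varepsilon}(r(h+1))$ from Lemma~\ref{lem:heis-good-fibre-gap} and Proposition~\ref{prop:abstract-fibre-gap-lsi}, the choices $L=r^3$ and $M=10$ in Lemma~\ref{lem:heis-burnin}, and the same $o(1)$ bookkeeping for $\eta,\zeta,R,\pi(G^c)$. You are slightly more explicit than the paper about the first term of $R$ and about monotonicity in time, while the paper additionally records irreducibility via Lemma~\ref{lem:heis-connectivity}, which the abstract theorem does not actually require.
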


\begin{proof}
We check the conditions of Theorem~\ref{thm:abstract-good-set} with the choices
\[
    S=\Heis,
    \qquad N=r,
    \qquad \Omega=V_r(\Heis),
    \qquad G=\Good_{\mathrm H}.
\]
For all sufficiently large \(r\), the assumption \(h\le\varepsilon r\) implies
\(r\ge h+1\), so Lemma~\ref{lem:heis-connectivity} gives irreducibility on
\(\Omega\) and \(\pi\) is the unique stationary law.  By
Lemma~\ref{lem:heis-good-fibre-gap} and Proposition~\ref{prop:abstract-fibre-gap-lsi},
for every nonnegative \(F:\Heis^r\to\mathbb R\) supported on \(G\),
\[
    \Ent_\mu(F^2)
    \le C_{p,\varepsilon}r(h+1)\cE_P^\mu(F,F),
\]
because \(|\Heis|=p^{h+1}\) and \(\gamma_{p,\beta_1}>0\) depends only on
\(p\) and \(\varepsilon\).  Thus \(A=C_{p,\varepsilon}r(h+1)\).

Lemma~\ref{lem:heis-good-stationary} gives
\[
    \pi(G^c)=o(1),
    \qquad
    A\frac{\pi(G^c)}{\pi(G)}
    \le C_{p,\varepsilon}r(h+1)e^{-c_{p,\varepsilon}r}=o(1),
\]
using \(h\le\varepsilon r\).  Take \(L=r^3\).  With \(M=10\), Lemma~\ref{lem:heis-burnin} gives
\[
    t_*=O_{p,\varepsilon}(r\log r\,(h+\log r))
\]
and the sojourn estimate \eqref{eq:heis-sojourn}; in the notation of
Theorem~\ref{thm:abstract-good-set}, this means \(\eta=o(1)\).  The confinement
time satisfies
\[
    t_{\mathrm{conf}}
    \le C_{p,\varepsilon}r(h+1)\log(e+\log|\Omega|)
    \le C_{p,\varepsilon}r(h+1)\log(r(h+2)).
\]
Since \(h\le\varepsilon r\), this is
\[
    O_{p,\varepsilon}(r(h+\log r)\log r)
\]
and is \(o(r^3)\).  Therefore
\(\zeta=\Prob(\operatorname{Poi}(t_{\mathrm{conf}})>r^3)=o(1)\).  The right-hand
side of \eqref{eq:abstract-after-burnin} is \(o(1)\), and the Poisson lower tail
in \eqref{eq:abstract-continuous-condition} is also \(o(1)\).  Theorem~\ref{thm:abstract-good-set}
gives the stated continuous-time bound.
\end{proof}

\begin{proof}[Proof of Theorem~\ref{thm:heis-main}]
Theorem~\ref{thm:heis-continuous-time} gives the continuous-time bound for
\(e^{t(P-I)}\).  The standard comparison between continuous time and the
half-lazy kernel gives the same bound up to a universal multiplicative constant
\cite[Theorem~20.3(ii)]{levin2017markov}.
\end{proof}

\subsection{Lower bounds}\label{subsec:heis-lower}

\begin{proof}[Proof of Theorem~\ref{thm:heis-lower}]
We prove the two lower bounds separately.

First, we prove the \(r\log r\) support-spreading bound.  Fix a symplectic
basis \(b_1,\ldots,b_h\) of \(W\) with \(\omega(b_1,b_2)=1\), and start the
chain from the canonical tuple appearing in Lemma~\ref{lem:heis-connectivity}:
\[
    x_0=((b_1,0),\ldots,(b_h,0),(0,1),(0,0),\ldots,(0,0)).
\]
Choose a nonzero \(\xi\in W^*\) with \(\xi(b_1)=1\) and
\(\xi(b_i)=0\) for \(2\le i\le h\).  For the horizontal projection
\[
    Y_t=(\xi(v_1(t)),\ldots,\xi(v_r(t)))\in\Fp^r\setminus\{0\},
\]
the initial support size is one.  Let
\[
    S_t=|\{i:Y_t(i)\ne0\}|.
\]
For the non-lazy chain, using the birth probability in
\eqref{eq:heis-support-birth-death},
\[
    \E[S_{t+1}\mid S_t=s]
    \le s+B_s\le s+\frac{s}{r}.
\]
Hence \(\E S_t\le(1+1/r)^t\le e^{t/r}\).  The half-lazy chain only slows this
support growth, so the same upper bound holds for it.  Let
\(\alpha=\frac12(p-1)/p\).  If \(t=c r\log r\) with \(c<1\), then
\[
    \Prob_{x_0}(S_t\ge\alpha r)
    \le \frac{e^{t/r}}{\alpha r}
    =\frac{r^c}{\alpha r}=o(1).
\]
Thus \(\{S_t<\alpha r\}\) has probability \(1-o(1)\) from this start.

Under stationarity, the projection \(Y=(\xi(v_1),\ldots,\xi(v_r))\) is uniform on
\(\Fp^r\setminus\{0\}\), as in the proof of Lemma~\ref{lem:heis-good-stationary}.
Therefore \(S\) has the law of a \(\operatorname{Bin}(r,(p-1)/p)\) variable
conditioned not to be zero.  Hoeffding's inequality gives
\[
    \pi(S<\alpha r)=e^{-c'_p r+o(r)}=o(1).
\]
The total variation distance at time \(c r\log r\) is therefore \(1-o(1)\), for
any fixed \(c<1\) small enough.  This proves
\(\tau_{\mathrm{mix}}\ge c_p r\log r\), after decreasing \(c_p\) if necessary.

Second, we prove the state-space counting lower bound.  The state space has
\[
    |\Omega|=p^r\prod_{q=0}^{h-1}(p^r-p^q).
\]
Indeed, the central coordinates are arbitrary, and the horizontal coordinates
are spanning \(r\)-tuples in an \(h\)-dimensional vector space.  Since
\(r\ge h+1\),
\[
    \prod_{q=0}^{h-1}(1-p^{q-r})
    \ge \prod_{j=2}^{\infty}(1-p^{-j})=:c_p^{(0)}>0.
\]
Consequently
\begin{equation}\label{eq:heis-state-space-lower}
    |\Omega|\ge c_p^{(0)}p^{r(h+1)}.
\end{equation}

From any fixed state, one step of the half-lazy chain has at most
\[
    M_r=1+2pr(r-1)
\]
possible outcomes.  Therefore, after \(t\) steps from a fixed starting point,
the law is supported on a set \(A_t\subseteq\Omega\) with
\[
    |A_t|\le M_r^t.
\]
Since \(\pi\) is uniform on \(\Omega\),
\[
    \pi(A_t)\le \frac{M_r^t}{|\Omega|}.
\]
If
\[
    t\le c'_p\frac{r(h+1)}{\log(e r)}
\]
with \(c'_p>0\) small enough, then \eqref{eq:heis-state-space-lower} and
\(\log M_r=O_p(\log(e r))\) imply \(\pi(A_t)\le1/2\) for all sufficiently large
\(r\).  Since the chain is supported on \(A_t\) at time \(t\),
\[
    \|\delta_xQ^t-\pi\|_{\TV}\ge1-\pi(A_t)\ge\frac12.
\]
This proves
\[
    \tau_{\mathrm{mix}}\ge c'_p\frac{r(h+1)}{\log(e r)}.
\]
Combining the two estimates and decreasing the constant gives the asserted
maximum lower bound.
\end{proof}

\section*{Acknowledgments}
AS thanks NSERC for support via grant RGPIN-2022-03012. This draft was created in collaboration with GPT Pro 5.5. In particular, it alerted us to \cite{Misaghian2010}. It was also used throughout the editing process. All statements and proofs remain the responsibility of the authors.

\bibliographystyle{plain}
\bibliography{refs_trans}

@article{BenHamou2025,
  author  = {Ben-Hamou, Anna},
  title   = {Mixing time of a matrix random walk generated by elementary transvections},
  journal = {Electronic Communications in Probability},
  volume  = {30},
  year    = {2025},
  pages   = {1--11},
  note    = {Article no. 78}
}

@article{BenHamouPeres2018,
  author  = {Ben-Hamou, Anna and Peres, Yuval},
  title   = {Cutoff for a stratified random walk on the hypercube},
  journal = {Electronic Communications in Probability},
  volume  = {23},
  number  = {31},
  year    = {2018},
  pages   = {1--10}
}

@article{CellerLeedhamGreenMurrayNiemeyerOBrien1995,
  author  = {Celler, Frank and Leedham-Green, Charles R. and Murray, Scott H. and Niemeyer, Alice C. and O'Brien, Eamonn A.},
  title   = {Generating random elements of a finite group},
  journal = {Communications in Algebra},
  volume  = {23},
  number  = {13},
  year    = {1995},
  pages   = {4931--4948}
}

@article{ChungGraham1997,
  author  = {Chung, Fan R. K. and Graham, Ronald L.},
  title   = {Stratified random walks on the {$n$}-cube},
  journal = {Random Structures \& Algorithms},
  volume  = {11},
  number  = {3},
  year    = {1997},
  pages   = {199--222}
}

@book{Diaconis1988,
  author    = {Diaconis, Persi},
  title     = {Group Representations in Probability and Statistics},
  series    = {Institute of Mathematical Statistics Lecture Notes--Monograph Series},
  volume    = {11},
  publisher = {Institute of Mathematical Statistics},
  address   = {Hayward, CA},
  year      = {1988}
}

@article{Diaconis1996WalksOG,
  author  = {Diaconis, Persi and Saloff-Coste, Laurent},
  title   = {Walks on generating sets of {A}belian groups},
  journal = {Probability Theory and Related Fields},
  volume  = {105},
  number  = {3},
  year    = {1996},
  pages   = {393--421}
}

@article{DiaconisHolmesNeal2000,
  author  = {Diaconis, Persi and Holmes, Susan and Neal, Radford M.},
  title   = {Analysis of a nonreversible {Markov} chain sampler},
  journal = {The Annals of Applied Probability},
  volume  = {10},
  number  = {3},
  year    = {2000},
  pages   = {726--752}
}

@article{DiaconisSaloffCoste1996LSI,
  author  = {Diaconis, Persi and Saloff-Coste, Laurent},
  title   = {Logarithmic {Sobolev} inequalities for finite {Markov} chains},
  journal = {The Annals of Applied Probability},
  volume  = {6},
  number  = {3},
  year    = {1996},
  pages   = {695--750}
}

@article{DiaconisShahshahani1981,
  author  = {Diaconis, Persi and Shahshahani, Mehrdad},
  title   = {Generating a random permutation with random transpositions},
  journal = {Zeitschrift f\"ur Wahrscheinlichkeitstheorie und Verwandte Gebiete},
  volume  = {57},
  year    = {1981},
  pages   = {159--179}
}

@article{Hildebrand1992,
  author  = {Hildebrand, Martin},
  title   = {Generating random elements in {$SL_n(F_q)$} by random transvections},
  journal = {Journal of Algebraic Combinatorics},
  volume  = {1},
  number  = {2},
  year    = {1992},
  pages   = {133--150}
}

@article{Kassabov2005,
  author  = {Kassabov, Martin},
  title   = {Kazhdan constants for {$SL_n(\mathbb Z)$}},
  journal = {International Journal of Algebra and Computation},
  volume  = {15},
  number  = {5--6},
  year    = {2005},
  pages   = {971--995}
}

@book{levin2017markov,
  author    = {Levin, David A. and Peres, Yuval and Wilmer, Elizabeth L.},
  title     = {Markov Chains and Mixing Times},
  edition   = {2},
  publisher = {American Mathematical Society},
  address   = {Providence, RI},
  year      = {2017}
}

@article{Misaghian2010,
  author  = {Misaghian, Manouchehr},
  title   = {The representations of the {H}eisenberg group over a finite field},
  journal = {Armenian Journal of Mathematics},
  volume  = {3},
  number  = {4},
  year    = {2011},
  pages   = {162--173}
}

@book{Sturmfels2008Algorithms,
  author    = {Bernd Sturmfels},
  title     = {Algorithms in Invariant Theory},
  edition   = {2},
  series    = {Texts and Monographs in Symbolic Computation},
  publisher = {Springer},
  address   = {Vienna},
  year      = {2008},
  isbn      = {978-3-211-77416-9},
  doi       = {10.1007/978-3-211-77417-6}
}

@incollection{Pak2001,
  author    = {Pak, Igor},
  title     = {What do we know about the product replacement algorithm?},
  booktitle = {Groups and Computation III},
  publisher = {de Gruyter},
  address   = {Berlin},
  year      = {2001},
  pages     = {301--347}
}

@article{PeresTanakaZhai2020,
  author  = {Peres, Yuval and Tanaka, Ryokichi and Zhai, Alex},
  title   = {Cutoff for product replacement on finite groups},
  journal = {Probability Theory and Related Fields},
  volume  = {177},
  number  = {3--4},
  year    = {2020},
  pages   = {823--853}
}

@incollection{SaloffCoste2004,
  author    = {Saloff-Coste, Laurent},
  title     = {Random walks on finite groups},
  booktitle = {Probability on Discrete Structures},
  series    = {Encyclopaedia of Mathematical Sciences},
  volume    = {110},
  publisher = {Springer},
  address   = {Berlin},
  year      = {2004},
  pages     = {263--346}
}

@mastersthesis{sotirakitrap,
  author = {Sotiraki, Aikaterini},
  title  = {Authentication protocol using trapdoored matrices},
  school = {Massachusetts Institute of Technology},
  year   = {2016},
  note   = {S.M. thesis, Department of Electrical Engineering and Computer Science}
}

@article{Halmos1969TwoSubspaces,
  author  = {Halmos, P. R.},
  title   = {Two subspaces},
  journal = {Transactions of the American Mathematical Society},
  volume  = {144},
  year    = {1969},
  pages   = {381--389},
  doi     = {10.2307/1995288}
}

@article{PeresWinkler2013Censoring,
  author  = {Peres, Yuval and Winkler, Peter},
  title   = {Can extra updates delay mixing?},
  journal = {Communications in Mathematical Physics},
  volume  = {323},
  number  = {3},
  year    = {2013},
  pages   = {1007--1016},
  doi     = {10.1007/s00220-013-1776-0}
}

@article{FillKahn2013Comparison,
  author  = {Fill, James Allen and Kahn, Jonas},
  title   = {Comparison inequalities and fastest-mixing {Markov} chains},
  journal = {The Annals of Applied Probability},
  volume  = {23},
  number  = {5},
  year    = {2013},
  pages   = {1778--1816},
  doi     = {10.1214/12-AAP886}
}

@article{Rosenthal1995Minorization,
  author  = {Rosenthal, Jeffrey S.},
  title   = {Minorization conditions and convergence rates for {Markov} chain {Monte Carlo}},
  journal = {Journal of the American Statistical Association},
  volume  = {90},
  number  = {430},
  year    = {1995},
  pages   = {558--566},
  doi     = {10.1080/01621459.1995.10476548}
}

@article{chen2020fast,
  title={Fast mixing of Metropolized {H}amiltonian {M}onte {C}arlo: Benefits of multi-step gradients},
  author={Chen, Yuansi and Dwivedi, Raaz and Wainwright, Martin J and Yu, Bin},
  journal={Journal of Machine Learning Research},
  volume={21},
  number={92},
  pages={1--72},
  year={2020}
}

@article{CaputoMenzTetali2015ApproxTensor,
  author  = {Caputo, Pietro and Menz, Georg and Tetali, Prasad},
  title   = {Approximate tensorization of entropy at high temperature},
  journal = {Annales de la Facult{\'e} des sciences de Toulouse : Math{\'e}matiques},
  series  = {6},
  volume  = {24},
  number  = {4},
  pages   = {691--716},
  year    = {2015},
  doi     = {10.5802/afst.1460}
}

@article{CaputoParisi2021BlockFactorization,
  author  = {Caputo, Pietro and Parisi, Daniel},
  title   = {Block factorization of the relative entropy via spatial mixing},
  journal = {Communications in Mathematical Physics},
  volume  = {388},
  number  = {2},
  pages   = {793--818},
  year    = {2021},
  doi     = {10.1007/s00220-021-04237-1}
}

@article{BlancaCaputoChenParisiStefankovicVigoda2022Mixing,
  author  = {Blanca, Antonio and Caputo, Pietro and Chen, Zongchen and Parisi, Daniel and {\v{S}}tefankovi{\v{c}}, Daniel and Vigoda, Eric},
  title   = {On mixing of {Markov} chains: coupling, spectral independence, and entropy factorization},
  journal = {Electronic Journal of Probability},
  volume  = {27},
  pages   = {1--42},
  year    = {2022},
  doi     = {10.1214/22-EJP867}
}

@article{Cesi2001QuasiFactorization,
  author  = {Cesi, Filippo},
  title   = {Quasi-factorization of the entropy and logarithmic {Sobolev} inequalities for {Gibbs} random fields},
  journal = {Probability Theory and Related Fields},
  volume  = {120},
  pages   = {569--584},
  year    = {2001},
  doi     = {10.1007/PL00008792}
}

@article{LuYau1993KawasakiGlauber,
  author  = {Lu, Sheng Lin and Yau, Horng-Tzer},
  title   = {Spectral gap and logarithmic {Sobolev} inequality for {Kawasaki} and {Glauber} dynamics},
  journal = {Communications in Mathematical Physics},
  volume  = {156},
  number  = {2},
  pages   = {399--433},
  year    = {1993},
  doi     = {10.1007/BF02098489}
}

@article{MadrasRandall2002Decomposition,
  author  = {Madras, Neal and Randall, Dana},
  title   = {{Markov} chain decomposition for convergence rate analysis},
  journal = {The Annals of Applied Probability},
  volume  = {12},
  number  = {2},
  pages   = {581--606},
  year    = {2002},
  doi     = {10.1214/aoap/1026915617}
}

@article{JerrumSonTetaliVigoda2004Decomposable,
  author  = {Jerrum, Mark and Son, Jung-Bae and Tetali, Prasad and Vigoda, Eric},
  title   = {Elementary bounds on {Poincar{\'e}} and log-{Sobolev} constants for decomposable {Markov} chains},
  journal = {The Annals of Applied Probability},
  volume  = {14},
  number  = {4},
  pages   = {1741--1765},
  year    = {2004},
  doi     = {10.1214/105051604000000639}
}

\end{document}